\documentclass[11pt]{article}
\usepackage[T1]{fontenc}
\usepackage{lmodern} %
\usepackage{microtype} %
\usepackage{amsmath}
\usepackage{amssymb}
\usepackage{mathrsfs}
\usepackage[all]{xy} %
\usepackage{mathtools} %
\usepackage{tabularray} %
\usepackage{xparse} %
\usepackage{amsthm} %
\usepackage{thmtools} %
\usepackage{setspace} %

\usepackage{tcolorbox} %
    \tcbuselibrary{skins, theorems, breakable, hooks}

\usepackage{varwidth} %

\usepackage{xcolor} %
    \definecolor{jd_green}{HTML}{068f89}
    \definecolor{jd_blue}{HTML}{0675BB}
    \definecolor{jd_red}{HTML}{E03E52}
    \definecolor{jd_purple}{HTML}{957FBB}

\usepackage{enumitem} %
    \setenumerate{label=(\roman*), noitemsep, topsep=0pt}

\usepackage{needspace} %

\usepackage{tikz} %
    \usetikzlibrary{positioning}
    \usetikzlibrary{shadows.blur}

\usepackage{tocloft}
\newcommand*{\printtableofcontents}{{
    \vspace*{-1.5em}
    \hypersetup{linkcolor=black} %
    \tableofcontents
}}

\usepackage{glossaries-extra}

\immediate\write18{makeindex build/\jobname.glo -t build/\jobname.glg -s build/\jobname.ist -o build/\jobname.gls}

\GlsXtrEnableEntryCounting{abbreviation}{1}
\glsdisablehyper %

\newabbreviation{ym}{\text{YM}}{\text{Yang-Mills}}
\newabbreviation{sd}{\text{SD}}{\text{self-dual}}
\newabbreviation{asd}{\text{ASD}}{\text{anti-self-dual}}
\newabbreviation{pym}{\text{PYM}}{\text{primitive Yang-Mills}}
\newabbreviation{tym}{\text{TYM}}{\text{trace Yang-Mills}}

\usepackage[
    bookmarksdepth=subsubsection,
]{hyperref}
\usepackage[open, openlevel=2]{bookmark}

\hypersetup{
    hypertexnames = false,
    bookmarksnumbered = true,
    colorlinks = true,
    linktoc = all,
    citecolor = jd_green, %
    citebordercolor = jd_green, %
    linkcolor = jd_green,
    linkbordercolor = jd_green,
    urlcolor = jd_green,
    urlbordercolor = jd_green,
}

\usepackage[capitalise, noabbrev]{cleveref}
\crefformat{equation}{\textup{(#2#1#3)}}
\Crefformat{equation}{\textup{(#2#1#3)}}

\crefrangeformat{equation}{\textup{(#3#1#4)} to \textup{(#5#2#6)}}
\Crefrangeformat{equation}{\textup{(#3#1#4)} to \textup{(#5#2#6)}}

\crefmultiformat{equation}{\textup{(#2#1#3)}}{ and \textup{(#2#1#3)}}{, \textup{(#2#1#3)}}{, and \textup{(#2#1#3)}}
\Crefmultiformat{equation}{\textup{(#2#1#3)}}{ and \textup{(#2#1#3)}}{, \textup{(#2#1#3)}}{, and \textup{(#2#1#3)}}

\numberwithin{equation}{section}

\allowdisplaybreaks

\newcommand*{\defterm}[1]{\textbf{#1}}

\newcommand{\orcid}[1]{%
    \href{https://orcid.org/#1}{%
        \,\raisebox{-1pt}{\includegraphics[height=0.85em]{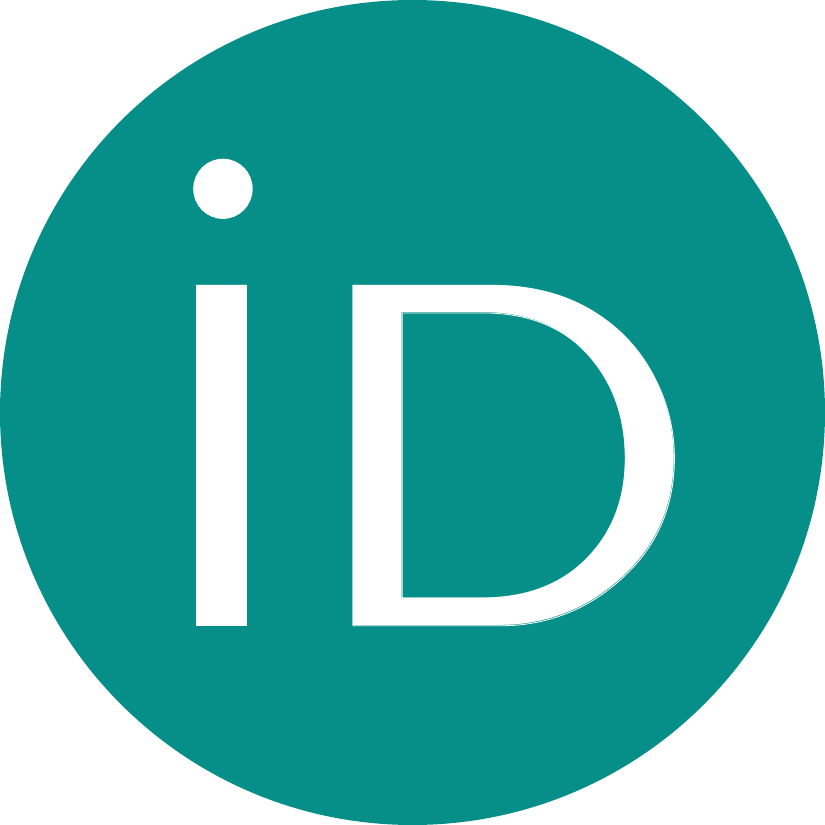}}%
    }%
}

\newcommand*{\affiliation}[2]{%
    \AtEndDocument{%
        \needspace{2\baselineskip}%
        {\small\noindent #1\\%
        \textit{Email address:}
        \href{mailto:#2}{\ttfamily #2}%
        \vspace{0.5em}}%
    }
}

\newcommand*{\laplace}{\Delta} %

\NewDocumentCommand{\CYM}{ O{\ensuremath{\zeta}} }{\operatorname{CYM}_{#1}}

\NewDocumentCommand{\eqdef}{ s }{
    \IfBooleanTF{#1}
    {\eqqcolon}
    {\coloneqq}
}

\newcommand*{\delp}{\partial_{+}}
\newcommand*{\delm}{\partial_{-}}
\newcommand*{\delpa}{\partial_{+A}}
\newcommand*{\delma}{\partial_{-A}}

\newcommand*{\Z}{\mathbb{Z}} %
\newcommand*{\R}{\mathbb{R}} %

\newcommand*{\calA}{\mathcal{A}} %
\newcommand*{\calJ}{\mathcal{J}} %
\newcommand*{\calL}{\mathcal{L}} %

\def\om{\omega}
\def\Om{\Omega}

\def\w{\wedge}

\DeclareMathOperator{\ad}{ad} %
\DeclareMathOperator{\SU}{SU} %
\DeclarePairedDelimiter{\norm}{\lVert}{\rVert} %

\newcommand{\cconj}[1]{\mkern2mu\overline{\mkern-2mu#1}} %
\NewDocumentCommand{\del}{ s }{\partial\IfBooleanT{#1}{{^*}}}
\NewDocumentCommand{\delbar}{ s }{\cconj{\del}\IfBooleanT{#1}{{^*}}}

\DeclareMathOperator*{\Diff}{Diff} %
\DeclareMathOperator{\SO}{SO} %
\NewDocumentCommand{\pushf}{ o o m }{ %
    \IfValueTF{#1}{ #3{}_{*,#1} }{ #3{}_{*} }
}
\newcommand*{\pullb}[1]{ #1^{*} } %
\newcommand*{\dd}{d} %
\newcommand*{\dx}{\dd x}
\newcommand*{\ddt}{\frac{\dd}{\dd t}}
\newcommand*{\vol}{\operatorname{vol}} %
\DeclarePairedDelimiter{\paren}{(}{)} %
\DeclarePairedDelimiter{\set}{\{}{\}} %
\DeclarePairedDelimiter{\abs}{\lvert}{\rvert} %
\DeclarePairedDelimiter{\innerp}{\langle}{\rangle} %
\DeclareMathOperator{\trace}{tr} %

\NewDocumentCommand{\Linnerp}{ m }{\paren*{#1}} %

\csundef{brack} %
\DeclarePairedDelimiter{\brack}{[}{]} %
\renewcommand*{\div}{\operatorname{div}} %

\NewDocumentCommand{\correspondencebox}{ > { \TrimSpaces } m }{
    \set*{
        \tcboxmath[
            blankest,
            top=4pt, bottom=4pt,
            left=2pt, right=2pt
        ]{
            \begin{varwidth}{0.5\textwidth}\centering
                #1
            \end{varwidth}
        }
    }
}

\newcommand*{\correspondence}[2]{
    \correspondencebox{#1} \longleftrightarrow \correspondencebox{#2}
}

\newcommand{\critYM}{\calA_{\operatorname{YM}}}
\newcommand{\critPYM}{\calA_{\operatorname{PYM}}}
\newcommand{\critTYM}{\calA_{\operatorname{TYM}}}

\theoremstyle{plain}
\newtheorem{thm}{Theorem}[section]
\crefname{thm}{Theorem}{Theorems}
\Crefname{thm}{Theorem}{Theorems}

\newtheorem{prop}[thm]{Proposition}
\crefname{prop}{Proposition}{Propositions}
\Crefname{prop}{Proposition}{Propositions}

\newtheorem{lem}[thm]{Lemma}
\crefname{lem}{Lemma}{Lemmas}
\Crefname{lem}{Lemma}{Lemmas}

\newtheorem{cor}[thm]{Corollary}
\crefname{cor}{Corollary}{Corollaries}
\Crefname{cor}{Corollary}{Corollaries}

\theoremstyle{definition}

\crefname{defn}{Definition}{Definitions}
\Crefname{defn}{Definition}{Definitions}

\newtheorem{rmk}[thm]{Remark}
\crefname{rmk}{Remark}{Remarks}
\Crefname{rmk}{Remark}{Remarks}

\newtheorem{ex}[thm]{Example}
\crefname{ex}{Example}{Examples}
\Crefname{ex}{Example}{Examples}

\newenvironment{acknowledgments}{%
    \pdfbookmark[2]{Acknowledgments}{acknowledgments}%
    \vspace{\baselineskip}%
    \noindent\textbf{Acknowledgments.}%
}{}

\AtEndDocument{
    \setstretch{1}
    \phantomsection
    \addcontentsline{toc}{section}{\refname}
    \bibliography{references.bib}
    \bibliographystyle{amsalpha}
}

\title{%
    Symplectic Yang-Mills Theory %
}
\date{}

\author{
    Jonathan Delgado\orcid{0009-0005-8120-8499},
    Li-Sheng Tseng\orcid{0000-0002-8189-6579},
    and
    Jiawei Zhou\orcid{0009-0004-5591-734X}
}

\affiliation %
{Department of Mathematics, University of California, Irvine, CA 92697, USA}
{jonathan.delgado@uci.edu}

\affiliation
{Department of Mathematics, University of California, Irvine, CA 92697, USA}
{lstseng@uci.edu}

\affiliation
{Department of Mathematics, Nanchang University, Nanchang, Jiangxi, 330031, China}
{jiaweizhou90@ncu.edu.cn}

\begin{document}
\maketitle

\begin{abstract} 
On a symplectic manifold, any differential two-form has a natural decomposition into two components: a primitive part and a non-primitive one.  Applying this decomposition to the curvature two-form of a principal bundle over a symplectic manifold, we obtain a natural splitting of the \gls{ym} functional into two functionals that intrinsically depend on the symplectic structure: the \gls{pym} functional and the \gls{tym} functional. We work out the basic properties of the critical solutions of these two functionals. The \gls{pym} functional in particular exhibits many of the desirable properties of the \gls{ym} functional, including the ellipticity of its Euler-Lagrange equations and an algebraic classification of its flat solutions on $G$-bundles.  We also prove a monotonicity formula for the \gls{pym} functional as a first step towards characterizing its moduli space of solutions.

\end{abstract}

\pdfbookmark[1]{Contents}{contents}%
\printtableofcontents

\section{Introduction}
\label{sec:introduction}

\glsfmtfull{ym} theory has played a profound role in the interaction between theoretical physics and pure mathematics for over half a century.  In differential geometry, when considering a principal bundle $P$ (or equivalently, a vector bundle $E$) over a Riemannian manifold $(M, g)$, it is standard to introduce a connection 1-form, $A$, to specify how to transport vectors over $M$.  On the space of connection forms, there is a simple functional which assigns to each connection form the norm-squared of the associated curvature two-form,
$F^A = \dd A + \frac{1}{2}\brack*{A, A}$.
This is just the \gls{ym} functional,
\begin{align*}
    \norm*{F^A}^2
    \eqdef \int_M \abs*{F^A}^2 \vol_g\,.
\end{align*}

In this paper, we consider the \gls{ym} functional when the base manifold $M$ has additionally a symplectic structure, i.e., $(M^{2n}, \om)$.   Since the symplectic structure $\om$ is a non-degenerate two-form, any differential $k$-form, $\alpha_k\in \Om^k(M)$, can be expressed as a polynomial in terms of powers of $\om$: 
\begin{align}\label{Ldef}
    \alpha_k = \beta_k +  \beta_{k-2} \w \om + \beta_{k-4} \w \om^2 + \ldots\,.
\end{align} 
This decomposition is typically referred to as the Lefschetz decomposition of differential forms on a symplectic manifold.  The differential forms
$\{\beta_k, \beta_{k-2}, \beta_{k-4}, \ldots\}$ which represent the ``coefficients'' of the polynomial expansion in $\om$ are called \textit{primitive} forms.  Heuristically, primitive forms can be thought of as those that cannot be written in terms of any non-trivial powers of $\om$.  The primitive coefficients in \cref{Ldef} can be computed explicitly; they are uniquely defined for a fixed $\alpha_k$ and $\om$ and generally vary with $\om$.
(For more details, see, for example, \cite[Section~2.1]{tsengCohomologyHodgeTheory2012II}.) 

That the curvature $F^A$ is a differential two-form suggests that we should apply the Lefschetz decomposition to it when $M$ is symplectic.  This results in two distinct terms:
\begin{align}\label{FAdecomp}
F^A = F^A_p + \Phi^A \om\,,
\end{align}
where $F^A_p$ denotes the primitive two-form component in the decomposition of \cref{Ldef}, and $\Phi^A \om$, being proportional to $\om$, is the non-primitive component with $\Phi^A$ a zero-form.  If, additionally, we work with a metric $g$ that is compatible with the symplectic structure $\om$, then the Yang-Mills functional also decomposes into two parts:
\begin{align*}
     \norm*{F^A}^2=\norm*{F^A_p}^2 + \norm*{\Phi^A\omega}^2\,.
\end{align*}
The first term, consisting of only the primitive component $F^A_p$ of the curvature,
\begin{align}
    \norm*{F^A_p}^2
    \eqdef \int_M \abs*{F^A_p}^2 \vol_g\,,
\end{align}
is what we shall call the \textbf{\glsfmtfull{pym} functional}. 
And we shall call the second term, consisting of only the non-primitive component $\Phi^A\omega$ of the curvature,
\begin{align}
    \norm*{\Phi^A\omega}^2 \eqdef
    \int_M \abs*{\Phi^A\omega}^2 \vol_g\,,
\end{align}
the \defterm{\glsfmtfull{tym} functional}.  Let us note that if $\dim M = 2$, then any two-form must be proportional to $\om$.  In this case, $F^A_p = 0$ and this implies the triviality of the \gls{pym} functional and that the \gls{tym} functional is just the two-dimensional \gls{ym} functional.  Because of this, we will assume in this paper that the manifold dimension $\dim M \geq 4$.

Hence, on a symplectic manifold $(M^{2n}, \om)$ for $n\geq 2$, it is natural when studying Yang-Mills theory to also consider  the \gls{pym} and \gls{tym} functionals. By definition, the \gls{pym} and \gls{tym} functionals are both intrinsically dependent on the symplectic structure and so studying them can inform us about the choice of the symplectic structure on $M$.

In studying these functionals, the main question that we pursue is to understand the space of critical solutions of the \gls{pym} and \gls{tym} functionals and compare their solution spaces with those of the \gls{ym} functional. The critical solutions satisfy the Euler-Lagrange equations of the functional, which for all three can be expressed concisely in terms of the adjoint of the covariant derivative, $\dd^*_A$, as can be seen in \cref{table:critical_points_of_functionals}.  Since we will be discussing solutions of three different functionals, we will say that $A$ is \defterm{Yang-Mills}, \defterm{primitive Yang-Mills}, or  \defterm{\glsfmtlong{tym}}, if $A$ is a critical solution of the \gls{ym}, \gls{pym}, or \gls{tym} functional, respectively.

\begin{table}[t]
    \centering
    \begin{tblr}{
        columns = {halign = c},
        row{1} = {font=\bfseries},
        cell{2-Z}{1-Y} = {mode = math},
        hline{1, Z} = {0.7pt, solid},
        hline{2} = {0.5pt, dotted, gray},
    }
        Functional & Euler-Lagrange Equations & Zero Functional Solution \\
        \text{YM}~~~~\norm*{F^A}^2 & \dd_A^*F^A = 0
            & $F^A=0$ ($A$ is flat)
        \\
        \text{PYM}~~~\norm*{F^A_p}^2\, & \dd_A^*F^A_p = 0
            & $F^A=\Phi^A\om$ ($A$ is $\om$-flat)
        \\
        \text{TYM}~~ \norm*{\Phi^A\omega}^2 &
             \dd_A^*(\Phi^A\omega) = 0
            & $F^A=F^A_p$ ($A$ is trace-flat)
    \end{tblr}
    \begin{varwidth}{0.8\textwidth}
        \caption{The Yang-Mills, primitive Yang-Mills and trace Yang-Mills functionals, their Euler-Lagrange equations, and the conditions on the curvature $F^A=F^A_p + \Phi^A \om$ for the corresponding functional to vanish.
        The latter two Euler-Lagrange equations are derived in
        \cref{prop:pym:EL_equations,prop:tym:EL_equations}, 
        respectively.
        }
        \label{table:critical_points_of_functionals}
    \end{varwidth}
\end{table}

The simplest non-trivial class of critical solutions, corresponding to the functional being zero, already points to the potential interest of these functionals.  In the \gls{ym} case, the functional vanishes when $F^A=0$ and such connections are called flat connections.  The classification of flat connections for all $G$-bundles is well known to be given by the $G$-character variety of $\pi_1(M)$.  

For the \gls{pym} functional, the condition for a zero functional is $F^A_p =0$, or equivalently, in terms of the $F^A$ decomposition of \cref{FAdecomp}, 
$F^A = \Phi^A\omega\,$,
which is precisely the \defterm{symplectically flat} (or simply, \defterm{$\om$-flat}) condition introduced by Tseng and Zhou   \cite{tsengSymplecticFlatnessTwisted2022}.  Unrelated to Yang-Mills theory, the $\om$-flat condition was identified by Tseng-Zhou to be a sufficient condition that allows for twisting Tseng-Yau's elliptic complex of primitive forms \cite{tsengCohomologyHodgeTheory2012II} by a vector bundle.  Moreover, as will be described in \cref{thm:classification_of_omega_flat_connections}, $\om$-flat connections also have an algebraic classification that takes into account the symplectic structure $\om$.  

On the other hand, a connection that results in the vanishing of the \gls{tym} functional, which we will refer to as \defterm{trace-flat}, must have purely primitive curvature, $F^A = F_p^A$.  In the presence of a compatible complex structure, all traceless Hermitian Yang-Mills solutions have primitive curvature and so they represent a widely-studied class of trace-flat solutions of the \gls{tym} functional.  However, the trace-flat condition is notably a weaker condition than the Hermitian Yang-Mills condition.  Specifically, there is no requirement that $F^A$ also be of complex bidegree $(1,1)$.  In fact, the moduli space of trace-flat solutions generally is infinite-dimensional as we will show later in \cref{ex:TYM:inf_dim_family}.

\pdfbookmark[2]{Outline of the paper}{outline} %
This paper is organized as follows.
We begin in \cref{sec:preliminaries} by detailing our notations and conventions for gauge theory and reviewing the extension of the Lefschetz decomposition to the exterior covariant derivative operator, splitting it as $\dd_A = \delpa + \omega \wedge \delma$
\cite{tsengSymplecticFlatnessTwisted2022}.
Just as $\dd_A^*$ is central to the \gls{ym} equations, the $\delpa^*$ and $\delma^*$ operators also have a main role to play in describing the \gls{pym} and \gls{tym} equations, respectively.

In \cref{sec:pym:functional}, we introduce the \gls{pym} functional and describe some of its basic properties.
In particular, we derive its Euler-Lagrange equations and prove that over closed manifolds its moduli space of solutions is finite-dimensional.
We give the same treatment for the \gls{tym} functional and point out the infinite-dimensionality of its moduli space.  The equations for all three functionals are also considered together.  For example, as can be easily seen from \cref{table:critical_points_of_functionals}, since $F^A = F_p^A + \Phi^A \om$, if $A$ is a critical solution of two of the three functionals, then it must be a critical solution of all three.  Moreover, in the presence of a compatible almost complex structure, we obtain a general criterion for a critical solution of any one functional to be a critical solution of all three.
\begin{restatable}{prop}{BidegreeEquivalencies}
    \label{cor:bidegree_equivalencies}
    Let  $M^{2n}$ be a symplectic manifold equipped with a compatible triple $(\omega, g, J)$ and $A$ be a connection.  Suppose that either of the following holds:  
    \begin{enumerate}
        \item $n \geq 2$ and $ F^A \in \Omega^{1,1}(M, \ad P)$;
        
        \item $n \geq 3$ and
            $F^A_p \in \Omega^{2,0}(M, \ad P) \oplus \Omega^{0,2}(M, \ad P)$.
    \end{enumerate}
    Then,  %
   $A$ is a critical solution of any one of the three functionals -- \gls{ym}, \gls{pym}, and \gls{tym} -- if and only if  it is a critical solution of all three.
\end{restatable}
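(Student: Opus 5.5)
The plan is to show that, under either hypothesis, the three Euler-Lagrange expressions $\dd_A^*F^A$, $\dd_A^*F^A_p$ and $\dd_A^*(\Phi^A\omega)$ are nonzero constant multiples of a single $\ad P$-valued one-form. Vanishing of any one of them then forces vanishing of all three. The two ingredients are the Bianchi identity $\dd_A F^A = 0$ and the explicit action of the Hodge star on two-forms of a given Lefschetz and bidegree type.

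\emph{Step 1 (Bianchi identity).} Since $\dd\omega = 0$, the identity $\dd_A F^A=0$ applied to $F^A = F^A_p + \Phi^A\omega$ gives
\[
\dd_A F^A_p = -\,\dd_A\Phi^A\wedge\omega .
\]

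\emph{Step 2 (Hodge star on two-forms).} For a compatible triple on $M^{2n}$ I will use the standard identities:
\begin{itemize}
\item for a primitive real two-form $\beta$ of type $(1,1)$, $*\beta = -\beta\wedge\omega^{n-2}/(n-2)!$;
\item for a primitive real two-form $\beta$ of type $(2,0)+(0,2)$, $*\beta = +\beta\wedge\omega^{n-2}/(n-2)!$;
\item $*\omega = \omega^{n-1}/(n-1)!$.
\end{itemize}
These are pointwise linear-algebra facts and can be checked in a unitary frame. They extend to $\ad P$-valued forms because $*$ acts only on the form part.

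In case (i), $\omega$ is of type $(1,1)$, so $F^A\in\Omega^{1,1}$ forces $F^A_p\in\Omega^{1,1}$ as well. Writing $\epsilon=-1$ in case (i) and $\epsilon=+1$ in case (ii), we get
\[
*F^A = \epsilon\,F^A_p\wedge\frac{\omega^{n-2}}{(n-2)!} + \Phi^A\,\frac{\omega^{n-1}}{(n-1)!}.
\]

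\emph{Step 3 (Compute the adjoints).} I use $\dd_A^* = \pm *\dd_A *$, with one fixed sign on two-forms, together with $\dd\omega=0$ and Step 1:
\[
\dd_A(*F^A_p) = \epsilon\,\frac{\dd_AF^A_p\wedge\omega^{n-2}}{(n-2)!} = -\epsilon\,\frac{\dd_A\Phi^A\wedge\omega^{n-1}}{(n-2)!} = -\epsilon(n-1)\,\dd_A\!\left(*(\Phi^A\omega)\right).
\]
Hence $\dd_A^*F^A_p = -\epsilon(n-1)\,\dd_A^*(\Phi^A\omega)$. Adding $\dd_A^*(\Phi^A\omega)$ to both sides gives
\[
\dd_A^*F^A = \bigl(1-\epsilon(n-1)\bigr)\,\dd_A^*(\Phi^A\omega).
\]
The coefficient $1-\epsilon(n-1)$ equals $n$ in case (i) and $2-n$ in case (ii).

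\emph{Step 4 (Conclude).} The coefficients relating the three expressions are $n-1$ and $1-\epsilon(n-1)$. In case (i) these are $n-1$ and $n$, both nonzero for $n\ge 2$. In case (ii) they are $n-1$ and $2-n$, both nonzero exactly when $n\ge 3$. This explains the dimension restriction in (ii): when $n=2$, self-dual primitive forms give $\dd_A^*F^A=0$ automatically. So vanishing of any one of $\dd_A^*F^A$, $\dd_A^*F^A_p$, $\dd_A^*(\Phi^A\omega)$ implies vanishing of the other two. By \cref{table:critical_points_of_functionals} (i.e.\ \cref{prop:pym:EL_equations,prop:tym:EL_equations}), this is the claimed equivalence.

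The main obstacle is Step 2. It requires getting the Hodge-star sign conventions right for primitive forms of each bidegree, and checking that the identity for $\beta\wedge\omega^{n-2}$ holds in the almost-Hermitian (non-integrable) setting. Since $*$ is pointwise, I would verify it in a unitary coframe at a point, for example on $\mathbb{C}^n$ with $\beta = e^1\wedge e^2 - e^3\wedge e^4$ (type $(1,1)$) and $\beta = e^1\wedge e^3 - e^2\wedge e^4$ (type $(2,0)+(0,2)$). Integrability of $J$ is never used, only $\dd\omega=0$.
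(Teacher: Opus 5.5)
Your proposal is correct and is essentially the paper's argument. The paper reads the result off \cref{table:updated_table_for_critical_points} (\cref{prop:kahler:yang_mills:equations}), which is derived from exactly your two ingredients: the Weil relation for $*$ on primitive two-forms of each $\calJ$-type, and the Bianchi identity $\dd_A F^A_p = -\omega\wedge\dd_A\Phi^A$. The only difference is that you impose the bidegree hypothesis before computing, so you obtain the proportionality constants $-\epsilon(n-1)$ and $1-\epsilon(n-1)$ among the three $\dd_A^*$ expressions directly, instead of passing through $\delma$ and the injectivity of $L^{n-1}$ on one-forms.
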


In \cref{sec:comparison_of_crit_points}, we give examples of explicit solutions on symplectic manifolds that are critical solutions of one of the functionals but not of the other two.  For instance, the 4-dimensional self-dual BPST instanton, a well-known Yang-Mills critical solution,  is not \gls{pym} with respect to the standard symplectic structure.  As we show, it is also not difficult to construct critical solutions that are \gls{pym} or \gls{tym}, but not \gls{ym}.  We further prove rigidity results on the existence and properties of critical solutions of the \gls{tym} functional and $\omega$-flat connections.
We use them to show, for example, that K3 surfaces do not admit $\omega$-flat connections on their tangent bundles with respect to any symplectic form
(\cref{ex:K3_surfaces}).
Furthermore, %
we relate the \gls{pym} functional to the cone Yang-Mills functional introduced by Tseng and Zhou in
\cite{tsengMappingConeConnections2025}.
In a special case, the zeros of the cone Yang-Mills functional coincide with the zeros of the \gls{pym} functional.
This is used to obtain the algebraic classification of $G$-bundles which admit $\omega$-flat connections. %

Finally, in \cref{sec:monotonicity_formula}, we prove a monotonicity formula as a first step towards a compactness theory of the moduli space of \gls{pym} connections.
To see the importance of such a formula, consider
Price's monotonicity formula for the \gls{ym} functional
\cite[Theorem~1.1']{priceMonotonicityFormulaYangMills1983}, which states that
\[
    f(r) = e^{ar^2}r^{4-2n}\int_{B_p(r)} \abs*{F^A}^2 \vol_g
\]
is monotone increasing whenever $A$ is \gls{ym}.
Here, $f$ is a scale-invariant $L^2$-norm of the curvature in a ball, and
$a = a_g(p)$ is some geometric constant with dimensions of inverse length squared.
Price's formula justifies $f$ as the natural quantity to detect singularities: its monotonicity implies that control of $f$ on any ball transfers to control over smaller balls, and its scale-invariance makes estimates of it geometrically meaningful.
Since singularities in \gls{ym} theory emerge as energy concentration, a monotonicity formula is a critical ingredient in the compactness theory of
\gls{ym} connections in higher dimensions
\cite{nakajimaCompactnessModuliSpace1988}.
We prove an analogous formula for \gls{pym} connections.
\begin{restatable}[Monotonicity formula]{thm}{MonotonicityFormula}
    \label{thm:monotonicity_formula}
    Let $(M^d, \omega, g)$ be a symplectic manifold with $d \geq 4$.
    Let $p \in M$. 
    Then there exist constants $r(p) = r_g(p)$ and 
    $a = a_g(p)$ such that, for any \gls{pym} connection $A$, and any
    $0 < r_1 \leq r_2 < r(p)$,
    \begin{equation}\label{eq:defect_monotonicity}
        \theta_{r_2} - \theta_{r_1}
        \geq
        4\int_{B_p(r_2) \setminus B_p(r_1)}
            e^{a\rho^2}\rho^{4-d}
            \abs{\iota_{\del_\rho}F^A_p}^2 \vol_g
        - 2\int_{r_1}^{r_2} e^{a\tau^2}\tau^{3-d} S_\tau \dd\tau\,,
    \end{equation}
    where
    \[
        \theta_r \eqdef e^{ar^2}r^{4-d}\int_{B_p(r)} \abs*{F_p^A}^2 \vol_g\,,
    \]
    and
    \[
        S_\tau \eqdef
        \int_{B_p(\tau)} \rho \innerp*{
            \iota_{\del_\rho}\omega \wedge \dd_A \Phi^A, F_p^A
        }\vol_g\,.
    \]
    If $A$ is also \gls{tym} then $S_\tau = 0$ and
    $\theta_r$ is monotone increasing.
    Moreover, the constants $r(p)$ and $a$ can be taken to agree with the corresponding constants in the \gls{ym} case.
    In particular, if $M = \R^d$ and $g$ is Euclidean, then we can take $a = 0$ and $r(p) = \infty$.
\end{restatable}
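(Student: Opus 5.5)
We follow Price's proof of the \gls{ym} monotonicity formula, i.e.\ the stress--energy tensor argument, applied to the primitive curvature $F^A_p$ in place of $F^A$. The extra defect term $S_\tau$ will come from the fact that $F^A_p$ need not satisfy the Bianchi identity.

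\textbf{Setup.} Let $X = \rho\,\del_\rho$ be the radial vector field on a geodesic ball $B_p(r)$, with $r < r(p)$ chosen below the injectivity radius. Let $\phi_t$ be its flow, and consider the first variation of $\int \abs{F_p^A}^2$ under pullback of $A$ by $\phi_t$. Equivalently, integrate the divergence of the stress--energy tensor
\[
    T_{ij} = \innerp{(F_p)_{ik},(F_p)_{jk}} - \tfrac14\abs{F_p}^2 g_{ij}
\]
against $X$. Standard computation gives
\[
    \int_{B_r}\Big(\abs{F_p}^2\tfrac12\div X - \sum\innerp{F_p(\nabla_{e_i}X,e_j),F_p(e_i,e_j)}\Big)
    = \text{boundary term} + \text{error}.
\]
Here the error involves $\int\innerp{\iota_X \dd_A F_p, F_p}$ and $\int \innerp{\iota_X F_p, \dd_A^*F_p}$. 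The term with $\dd_A^*F^A_p$ vanishes since $A$ is \gls{pym}.

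\textbf{The non-Bianchi term.} The Bianchi identity $\dd_A F^A=0$ together with \cref{FAdecomp} and $\dd\omega=0$ gives
\[
    \dd_A F^A_p = -\dd_A\Phi^A\wedge\omega.
\]
So the remaining error is $-\int\innerp{\iota_X(\dd_A\Phi\wedge\omega),F_p}$. Expanding,
\[
    \iota_X(\dd_A\Phi\wedge\omega) = (X\Phi)\,\omega - \dd_A\Phi\wedge\iota_X\omega .
\]
The first piece is pointwise orthogonal to the primitive form $F_p$ (since $\innerp{\omega,F_p}=\Lambda F_p=0$ for compatible $g$). Hence the error is exactly $\int\rho\innerp{\iota_{\del_\rho}\omega\wedge\dd_A\Phi, F_p}$ up to sign, which is $S_\tau$. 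If $A$ is also \gls{tym}, then by \cref{table:critical_points_of_functionals} and the $\dd_A^*(\Phi\omega)=0$ equation, $\dd_A\Phi=0$, so this term vanishes.

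\textbf{Derivative of $\theta$.} In Euclidean space $\nabla X=\mathrm{id}$, so the left side becomes $\frac{d-4}{2}\int_{B_\tau}\abs{F_p}^2$. The boundary term is
\[
    \frac{\tau}{2}\int_{\del B_\tau}\abs{F_p}^2 - \tau\int_{\del B_\tau}\abs{\iota_{\del_\rho}F_p}^2 .
\]
On a general metric, $\nabla X = \mathrm{id}+O(\rho^2)$, giving an error bounded by $C\tau^2\int_{B_\tau}\abs{F_p}^2$. These estimates use only $\abs{F_p}^2$-type pointwise bounds identical to the \gls{ym} case, so Price's constants $a$ and $r(p)$ work verbatim; for $\R^d$ with Euclidean $g$ we get $a=0$ and $r(p)=\infty$. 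Rearranging yields
\[
    \frac{d}{d\tau}\theta_\tau \ge 4e^{a\tau^2}\tau^{4-d}\int_{\del B_\tau}\abs{\iota_{\del_\rho}F_p}^2 - 2e^{a\tau^2}\tau^{3-d}S_\tau .
\]
Integrating from $r_1$ to $r_2$ and using the coarea formula gives \cref{eq:defect_monotonicity}.

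The main obstacle is the bookkeeping in the stress--energy identity without Bianchi: one must track precisely the signs and factors in front of $S_\tau$. One must also check that the orthogonality $\innerp{\omega,F_p}=0$ kills the $(X\Phi)\omega$ piece.
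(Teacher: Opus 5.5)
Your argument is correct, but it follows Price's stress--energy route rather than the paper's inner-variation argument.

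\emph{The paper's route.} It pulls $A$ back by lifts of the flow of a compactly supported $X=\xi(\rho)\rho\,\partial_\rho$ and uses criticality to set $\frac{d}{dt}\lVert F_p^{A_t}\rVert^2=0$. The defect then first appears as $S=(\Phi^A\mathcal{L}_X\omega,F^A_p)$. It arises because $\phi_t^*$ does not preserve $\omega$, so pullback does not commute with the Lefschetz projection. The paper then uses the PYM equation a second time, via $\mathcal{L}_X\omega=d\iota_X\omega$, to rewrite $S=(\iota_X\omega\wedge d_A\Phi^A,F^A_p)$. Finally it lets the cutoffs $\xi_\tau$ tend to indicator functions of balls.

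\emph{Your route.} You use the pointwise identity $\nabla^jT_{ij}=-\langle F_{ik},(d_A^*F)_k\rangle-\tfrac12\langle (d_AF)_{ikl},F_{kl}\rangle$ with $F=F^A_p$. PYM removes the first term. The defect then appears directly in its final form from the Bianchi defect $d_AF^A_p=-d_A\Phi^A\wedge\omega$, with primitivity killing the $(\nabla_X\Phi^A)\,\omega$ piece.

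\emph{What each buys.} Your version makes it transparent that the defect is exactly the failure of $F^A_p$ to be $d_A$-closed. Its vanishing under TYM is then just (iv)$\Leftrightarrow$(vi) of \cref{prop:tym:EL_equations}, and you need no cutoffs or extra integration by parts. The paper's version is phrased entirely through first variations of the functional. That is the form that carries over to stationary or weak connections, whereas you differentiate $F^A_p$ pointwise (harmless here, since $A$ is smooth). It also produces the alternative expression $S=(\Phi^A\mathcal{L}_X\omega,F^A_p)$ that is interpreted in the introduction.

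\emph{On the bookkeeping you flagged.} The sign does come out as stated. Use $\lvert F_p\rvert^2=\sum_{i,j}\lvert F_p(e_i,e_j)\rvert^2$, which is the paper's normalization and the one matching the $\tfrac14$ in your $T$. Then:
\begin{enumerate}
\item $X^i\nabla^jT_{ij}=\tfrac12\rho\langle\iota_{\partial_\rho}\omega\wedge d_A\Phi^A,F^A_p\rangle$;
\item the bulk integrand is $\tfrac14\lvert F_p\rvert^2\operatorname{div}X-\sum_{i,j}\langle F_p(\nabla_{e_i}X,e_j),F_p(e_i,e_j)\rangle$;
\item the boundary term is $\tfrac{\tau}{4}\int_{\partial B_\tau}\lvert F_p\rvert^2-\tau\int_{\partial B_\tau}\lvert\iota_{\partial_\rho}F_p\rvert^2$.
\end{enumerate}
In the flat case this gives exactly $\frac{d}{d\tau}\bigl(\tau^{4-d}\int_{B_\tau}\lvert F_p\rvert^2\bigr)=4\tau^{4-d}\int_{\partial B_\tau}\lvert\iota_{\partial_\rho}F_p\rvert^2-2\tau^{3-d}S_\tau$. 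The $\tfrac12$'s in your intermediate displays belong to the form-norm convention $\lvert F_p\rvert^2=\tfrac12\sum_{i,j}\lvert F_p(e_i,e_j)\rvert^2$. Under that convention the same computation gives constants $2$ and $-2$, so fix one convention throughout.
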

Notably, \Cref{thm:monotonicity_formula} qualitatively differs from the \gls{ym} case by the presence of the defect term $S_\tau$.
After integrating by parts, $S_\tau$ can also be written as
\[
    S_\tau = \int_M \innerp{\Phi^A \calL_X\omega, F_p^A}\vol_g\,,
\]
where $X$ is the particular compactly supported vector field used to prove the monotonicity formula (see \cref{eq:price_vector_field}).
It follows that $S_\tau$ measures how much of the primitive component of $\calL_X\omega$ is shared by $F_p^A$, weighted by $\Phi^A$.
Given that monotonicity of $\theta_r$ would be used to transfer estimates of
$\abs*{F_p^A}^2$ from larger geodesic balls to smaller geodesic balls, the sign of the defect term can affect our ability to do so.
Since the monotonicity formula is also a critical step in proving an $\epsilon$-regularity theorem that would provide a sufficient condition to conclude $L^\infty$-estimates of $\abs*{F_p^A}^2$ from $L^2$-estimates, it would also be worthwhile to examine how the defect term modifies the $\epsilon$-regularity statement.

\begin{acknowledgments}
We are sincerely grateful to Davide Parise for numerous discussions and for sharing many helpful insights related to this work.
    We would also like to thank
        Nawal Baydoun,
        Daniel Fadel,
        Joseph R. Farah,
        Connor Mooney,
        Richard Schoen,
        and 
        Jeffrey Streets
    for helpful discussions.
    The first author was partially supported by NSF grant DMS-2342135.
     The last author was partially supported by the Natural Science Foundation of Jiangxi, China (No. 20262BAC240204).
\end{acknowledgments}

\section{Preliminaries}
\label{sec:preliminaries}
In this section, we introduce our notations/conventions and review the symplectic differential operators that will be useful to study the critical solutions of the \gls{pym} and \gls{tym} functionals.

Throughout this paper, we will only concern ourselves with symplectic manifolds $(M^{d}, \om)$ of dimension $d=2n \geq 4$.  When a Riemannian metric $g$ or an almost complex structure $J$ is used, it is always assumed to be part of a compatible triple, $(\om, g, J)$.  We also assume that the orientation is compatible with the volume form $\vol_g = \omega^n/n!\,$. 
We begin with our convention for the standard \gls{ym} theory.

Let $P \to M$ be a principal bundle with compact structure group $G$.
Since $G$ is compact, there exists an Ad-invariant inner product on its Lie algebra. Together with $g$, this defines a pointwise inner product, denoted
$\innerp{\cdot, \cdot}$, on bundle-valued forms $\Omega^*(M, \ad P)$.
Integrating this over $M$ gives the $L^2$-inner product,
\[
    \norm{\eta}^2
    \eqdef \Linnerp{\eta, \eta}
    \eqdef \int_M \abs*{\eta}^2 \vol_g
    = \int_M \innerp{\eta, \eta} \vol_g\,,~~~
    \forall \eta \in \Omega^*(M, \ad P)\,.
\]
We follow the Atiyah-Bott convention for the wedge product and Lie bracket of $\ad P$-valued forms 
\cite[Section~3]{atiyahYangMillsEquationsRiemann1983}.
Namely, if
$\eta \in \Omega^k(M, \ad P)$ and $\chi \in \Omega^\ell(M, \ad P)$
are written as
\begin{align*}
    \eta = \eta^i \otimes e_i
    \quad \text{and} \quad
    \chi = \chi^j \otimes f_j\,,
\end{align*}
for real-valued forms $\eta^i, \chi^j$, and sections
$e_i, f_j \in \Gamma(\ad P)$, then their wedge product is defined as
\begin{equation}\label{eq:convention:wedge_product}
    \eta \wedge \chi
    \eqdef \eta^i \wedge \chi^j \cdot \innerp{e_i, f_j}
    \in \Omega^{k+\ell}(M)\,.
\end{equation}
Their Lie bracket is given by
\[
    \brack{\eta, \chi}
    \eqdef
    \eta^i \wedge \chi^j \otimes \brack{e_i, f_j}
    \in \Omega^{k+\ell}(M, \ad P)\,.
\]
With this convention, the $L^2$-inner product can be written as
\[
    \Linnerp{\eta, \chi}
    = \int_M \eta \wedge *\chi\,,
\]
where the Hodge star $* : \Omega^k(M, \ad P) \to \Omega^{d-k}(M, \ad P)$ extends to bundle-valued forms by acting trivially on the bundle part.
Moreover, the identity
\[
    \innerp{\brack{u, v}, w}
    = \innerp{u, \brack{v, w}}\qquad 
   \forall\, u, v, w \in \Gamma(\ad P)\,,
\]
extends to
\begin{equation}
\label{eq:bracket_wedge_identity}
    \brack{\eta, \chi} \wedge \nu = \eta \wedge \brack{\chi, \nu}\qquad
    \forall\,\eta, \chi, \nu \in \Omega^*(M, \ad P)\,.
\end{equation}

A connection 1-form $A$ on $P$ induces an exterior covariant derivative operator
\begin{align*}
    \dd_A :
        \Omega^k(M, \ad P) &\to \Omega^{k+1}(M, \ad P)
    \\* %
        \eta &\mapsto \dd\eta + \brack{A, \eta}\,.
\end{align*}
This operator has a formal $L^2$-adjoint, $\dd_A^*: \Omega^k(M, \ad P)\to \Omega^{k-1}(M, \ad P)$,  defined by
\[
    \dd_A^* = - * \dd_A * \,,
\]
where the sign is fixed since $M$ is even-dimensional.
The \gls{ym} functional is the squared $L^2$-norm of the connection's curvature $F^A$,
\begin{align}\label{YMf}
    \norm*{F^A}^2
    \eqdef \int_M \abs*{F^A}^2 \vol_g\,.
\end{align}
In order to decompose this functional, we next review the Lefschetz decomposition. We will also review the symplectic differential operators which will help us study the Euler-Lagrange equations of the resulting functionals.

\subsection{Twisted Symplectic Differentials}\label{tsympdiff}

In the presence of the non-degenerate two-form $\om$, we can apply the Lefschetz decomposition and express any differential form $\alpha_k\in \Om^k(M)$ as a polynomial in $\om\,$:  
\begin{align}\label{Lefd}
    \alpha_k
    = \sum_{p=0}^{\lfloor k/2 \rfloor} \omega^p \wedge \beta_{k - 2p}\,,
\end{align}
where $\{\beta_k, \beta_{k-2}, \ldots, \beta_{k-2\lfloor k/2 \rfloor}\}$ are primitive forms uniquely determined by $\alpha_k$ and $\om$.
Moreover, this decomposition is pointwise orthogonal with respect to
$\innerp{\cdot, \cdot}$.

The space of primitive $k$-forms is denoted by $P^k(M)$.  It is useful to introduce the Lefschetz operator $L$ and its dual operator\footnote{The dual operator $\Lambda$ can be defined symplectically without regard to any metric.  Acting on differential forms, it is just the interior product by the Poisson bivector field associated with $\om$.} which is commonly denoted by $\Lambda$:
\begin{align}\label{LLamdef}
    L :  \Omega^k(M)\, &\to \,\Omega^{k+2}(M)\,, \qquad \qquad &\Lambda :  \Omega^k(M)\, &\to \,\Omega^{k-2}(M)\,,
    \\* %
        \alpha~~~~ &\mapsto \omega\wedge\alpha\,\qquad & \quad \alpha ~~~~~&\mapsto L^*\, \alpha = (*^{-1} L \,*) \alpha\,. \nonumber
\end{align}
They allow us to characterize the space of primitive forms as follows:
\begin{equation}\label{eq:characterization_of_prim_k_forms}
    P^k(M)
    = \Omega^k(M) \cap \ker \Lambda
    = \Omega^k(M) \cap \ker L^{n-k+1}\,,
\end{equation}
where $k=0,1,\ldots, n$.  We point out that there are no primitive forms of degree $k>n$.  Also, we use the notation
\begin{align}\label{Pidef}
    \Pi :
        \Omega^k(M) &\to P^k(M)
    \\* %
        \alpha_k~~~ &\mapsto ~~\beta_k \nonumber
\end{align}
to denote the projection operator that sends forms to their primitive component.

Two useful properties that we will use are that
\[
    L^{r}\big|_{P^k(M)} : P^k(M) \to \Omega^{2r + k}(M)
\]
is injective for all $r \leq n-k$\,, and also the commutator,
\begin{align}\label{Llambdaf}
[\Lambda, L] \alpha = (n-k) \alpha\,, \qquad \forall \alpha\in\Omega^k(M)\,.
\end{align}

In \cite{tsengCohomologyHodgeTheory2012II}, Tseng and Yau observed that
\[
    d\paren*{ \om^rP^k(M) }
        \subseteq \om^rP^{k+1}(M)\oplus \omega^{r+1} P^{k-1}(M)\,.
\]
This implies the decomposition of the exterior derivative
\begin{align}\label{decomp}
d = \delp +\omega\w \delm\,,
\end{align}
where
\begin{align*}
    \delp &: \om^rP^k(M)\to \om^rP^{k+1}(M)\,,
    \\
    \delm &: \om^rP^k(M)\to \om^rP^{k-1}(M)\,,
\end{align*}
and the pair $\{\delp, \delm\}$ satisfies
\[
    \delp^2=0\,,
    \quad\delm^2=0\,,
    \quad  \om\w (\delp\delm+\delm\delp)
    =0\,.
\]
Based on this, they constructed an elliptic complex of primitive forms,
\begin{align*}
\xymatrix@R=30pt@C=30pt{
    0\; \ar[r] & \; P^{0}(M) \ar[r]^\delp &\; P^{1}(M) \ar[r]^\delp& ~ \cdots ~\ar[r]^\delp&\; P^{n-1}(M) \ar[r]^\delp&\; P^{n}(M) \ar[d]^{-\delp\delm}
    \\
    0\; & \; P^{0}(M) \ar[l]_{-\delm} &\; P^{1}(M)\ar[l]_{~~-\delm}& ~ \cdots ~\ar[l]_{~~~-\delm}&\; P^{n-1}(M) \ar[l]_{-\delm}&\; P^{n}(M)\,.
    \ar[l]_{~~~-\delm} \; 
}
\end{align*}
Since the complex is elliptic, the Laplacian operators associated to this complex are automatically elliptic.  Of particular relevance for us are the following two symplectic Laplacians\footnote{For the computation of the principal symbol of the two symplectic Laplacians, see \cite[Appendix~B]{TsengWang2022}.}:
\begin{align}\label{LaplaceP}
\Delta_+ &= \delp \delp^*+\delp^*\delp\,,  \\
\Delta_{++} & =  (\delp\delm)^*\delp\delm + (\delp \delp^*)^2\,. \label{LaplacePP}
\end{align}

The Lefschetz decomposition of differential forms $\Om^*(M)$ straightforwardly extends to a decomposition on the space of twisted differential forms $\Omega^*(M,\ad P)$.
This is because the Lefschetz decomposition only acts on the differential form component of the twisted forms. Hence, a twisted differential form $\eta_k\in \Om^k(M, \ad P)$ can also be expressed as a polynomial in $\om$:
\begin{align*}
    \eta_k
    = \sum_{p=0}^{\lfloor k/2 \rfloor} \omega^p \wedge \sigma_{k - 2p}\,,
\end{align*}
where $\{\sigma_k, \sigma_{k-2}, \ldots, \sigma_{k-2\lfloor k/2 \rfloor}\}$ are now elements of the twisted primitive forms $P^*(M, \ad P)$ and also uniquely determined by $\eta_k$ and $\om$.  The definitions of the operators $L$, $\Lambda$, and $\Pi$ in \cref{LLamdef} and \cref{Pidef} also naturally extend to act on twisted forms.  To keep the notation simple, we shall use the same symbols to denote their action on twisted forms.

As noted by Tseng and Zhou \cite[Section~2.2]{tsengSymplecticFlatnessTwisted2022}, the covariant derivative $\dd_A$, acting on the space of twisted forms $\Omega^*(M,\ad P)$, has a similar decomposition to \cref{decomp}:
\begin{align}\label{Adecomp}
\dd_A = \delpa + \om\w\delma\,,
\end{align}
where
\begin{align*}
    \delpa &: \om^rP^k(M,\ad P) \to \om^rP^{k+1}(M,\ad P)\,,
    \\
    \delma &: \om^rP^k(M,\ad P) \to \om^rP^{k-1}(M,\ad P)\,.
\end{align*}
Thus, for example, in the case of $r=0$, a primitive form $\sigma \in P^k(M, \ad P)$ is $\delma$-closed if and only if $\dd_A\sigma$ is also primitive.

In general,
$\delpa^2$, $\om\w(\delpa\delma+\delma\delpa)$, and $\delma^2$ are nontrivial and depend on the curvature, just as $\dd_A^2$ does.  The following  lemma generalizes 
\cite[(2.13)~and~(2.14)]{tsengSymplecticFlatnessTwisted2022}
for nonzero $F_p^A$.
\begin{lem}[Squares of twisted symplectic differentials]
\label{lem:squares_of_twisted_symplectic_differentials}
    Let $A$ be a connection over a symplectic manifold with
    Lefschetz decomposed curvature $F^A = F^A_p + \Phi^A\omega$.
    Let $\psi \in P^k(M, \ad P)$ be primitive.
    Then the Lefschetz decomposition
    \[
        \brack{F_p^A, \psi}
        = \Pi\brack{F_p^A, \psi}
            + \omega\wedge\sigma_{k} + \omega^{2}\wedge\sigma_{k-2}
    \]
    terminates at $\omega^2$, where $\sigma_k$ and $\sigma_{k-2}$ are primitive. 
    Furthermore,
    \begin{align*}
        \delpa^2\psi &= \Pi\brack{F_p^A, \psi}\,,
        \\
        \omega\wedge\paren*{\delpa\delma + \delma\delpa}\psi
            &= \omega\wedge\paren*{ \brack{\Phi^A, \psi} + \sigma_{k} }\,,
        \\
        \delma^2\psi
            &= \sigma_{k-2}\,.
    \end{align*}
    If $k \leq n - 1$, then
    $\paren*{\delpa\delma + \delma\delpa}\psi
    = \paren*{ \brack{\Phi^A, \psi} + \sigma_{k} }$.
\end{lem}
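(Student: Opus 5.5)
The plan is to compute $\dd_A^2\psi = \brack{F^A,\psi}$ in two ways and compare Lefschetz components, using uniqueness of the decomposition.

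\textbf{Termination of the decomposition.} First I would show that the decomposition of $\brack{F_p^A,\psi}$ stops at $\omega^2$. The cleanest route is the $\mathfrak{sp}$-representation theory of the Lefschetz $\mathfrak{sl}_2$: the product of a primitive $2$-form and a primitive $k$-form has components only along $\omega^j$ with $j\le \min(2,k)$. A more hands-on alternative is a pointwise computation in a Darboux frame: $\Lambda^3(\beta_2\wedge\psi)=0$ for primitive $\beta_2,\psi$. This follows by iterating the commutation of $\Lambda$ past wedge products. Since $\Lambda\beta_2=0$ and $\Lambda\psi=0$, $\Lambda$ acts on the product through the mixed second-order contraction term only. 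Each application of $\Lambda$ lowers the degree in $\beta_2$ by one, so $\Lambda^3$ kills the product. Then, by the $\mathfrak{sl}_2$ structure (\cref{Llambdaf}), a form annihilated by $\Lambda^3$ has no $\omega^{j}$ components with $j\ge 3$. The bundle part only rides along through $\brack{e_i,f_j}$, so the scalar statement suffices.

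\textbf{Comparing the two expansions.} Next I would expand $\dd_A^2\psi$ using \cref{Adecomp}, noting that $\dd_A$ commutes with $L$ because $d\omega=0$:
\begin{align*}
\dd_A^2\psi = (\delpa+\omega\wedge\delma)(\delpa\psi+\omega\wedge\delma\psi)
= \delpa^2\psi + \omega\wedge(\delma\delpa+\delpa\delma)\psi + \omega^2\wedge\delma^2\psi .
\end{align*}
To justify this, I would check that $\delpa$ and $\delma$ on $\omega^rP^k$ are $L^r$ applied to the corresponding operators on $P^k$; this holds because $\omega^r\wedge$ commutes with $\dd_A$. By the mapping properties, $\delpa^2\psi\in P^{k+2}$, the middle term lies in $\omega P^k$, and $\delma^2\psi\in P^{k-2}$. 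So this is a Lefschetz decomposition of $\dd_A^2\psi$.

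On the other side, $\dd_A^2\psi=\brack{F^A,\psi}=\brack{F_p^A,\psi}+\omega\wedge\brack{\Phi^A,\psi}$. Here $\brack{\Phi^A,\psi}$ is primitive, since bracketing with a $0$-form preserves $\ker\Lambda$. Substituting the first step's decomposition gives
\[
\Pi\brack{F_p^A,\psi} + \omega\wedge(\sigma_k+\brack{\Phi^A,\psi}) + \omega^2\wedge\sigma_{k-2}.
\]
Matching components degree by degree via uniqueness of the Lefschetz decomposition yields the three identities. One caveat is that the uniqueness statement requires the $\omega^r$-components to be well defined, which holds, with primitive degrees automatically vanishing when out of range.

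\textbf{Removing $\omega$.} For the final claim, I would use injectivity of $L$ on $P^k$ when $1\le n-k$, i.e. $k\le n-1$. Both $(\delpa\delma+\delma\delpa)\psi$ and $\brack{\Phi^A,\psi}+\sigma_k$ lie in $P^k$, so $\omega$ can be cancelled.

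The main obstacle is the termination step, that is, the bilinear contraction identity showing $\Lambda^3$ annihilates the product of two primitives. I would try first to verify the commutator formula $[\Lambda,\alpha\wedge]$ for a $2$-form $\alpha$ in Darboux coordinates.
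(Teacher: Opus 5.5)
Your argument is correct, and its core is the paper's. You expand $\dd_A^2\psi$ through $\dd_A=\delpa+\omega\wedge\delma$ into pieces in $P^{k+2}$, $\omega P^k$, $\omega^2P^{k-2}$. You compare this with $\dd_A^2\psi=[F_p^A,\psi]+\omega\wedge[\Phi^A,\psi]$, and you cancel $\omega$ when $k\le n-1$.

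The genuine difference is the termination step. You single it out as the main obstacle, but the paper gets it for free from your own second step. Rearranging gives
\[
[F_p^A,\psi]=\delpa^2\psi+\omega\wedge\bigl((\delpa\delma+\delma\delpa)\psi-[\Phi^A,\psi]\bigr)+\omega^2\wedge\delma^2\psi .
\]
Since $[\Phi^A,\psi]\in P^k$, this already is the Lefschetz decomposition of $[F_p^A,\psi]$. It visibly stops at $\omega^2$, and the three identities are read off from it directly.

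Your pointwise argument that $\Lambda^3(\beta_2\wedge\psi)=0$ is valid. In the second iteration you also need $[\Lambda,\iota_v]=0$, so that $\Lambda$ annihilates the contractions $\iota_v\psi$. What it buys is a connection-free algebraic statement about wedge products of arbitrary primitive forms. For this lemma, though, it is redundant, because the splitting of $\dd_A$ already encodes it. Your first suggested route would really rest on $\mathrm{Sp}(2n)$-equivariance of the wedge product rather than on the Lefschetz $\mathfrak{sl}_2$, since $L(\alpha\wedge\beta)=L\alpha\wedge\beta$ is not a derivation rule.

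Finally, make one point explicit. Reading off $\delma^2\psi=\sigma_{k-2}$ means cancelling $\omega^2$. This is legitimate because $L^2$ is injective on $P^{k-2}$ whenever $k\le n$, and $\psi=0$ otherwise. The paper states this explicitly.
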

\begin{proof}
    Squaring the covariant derivative gives
    \begin{equation}
    \label{eq:prim_bidegree_decomp_of_dA_squared}
        \dd_A^2\psi 
        = \underbrace{\delpa^2\psi}_{P^{k+2}}
        + \underbrace{
            \omega \wedge \paren*{\delpa\delma + \delma\delpa}\psi
        }_{\omega P^{k}}
        + \underbrace{\omega^2 \wedge \delma^2\psi}_{\omega^{2}P^{k-2}}\,.
    \end{equation}
    This can also be written in terms of the curvature,
    \begin{equation}
    \label{eq:square_of_covariant_derivative}
        \dd_A^2\psi
        = \brack{F^A, \psi}
        = \brack{F^A_p, \psi}
        + \omega \wedge \brack{\Phi^A, \psi}\,.
    \end{equation}
    In particular,
    $\brack{\Phi^A, \psi} \in P^{k}(M, \ad P)$.
    Rearranging \cref{eq:square_of_covariant_derivative}
    with \cref{eq:prim_bidegree_decomp_of_dA_squared} proves that
    $\brack{F^A_p, \psi}$ terminates at $\omega^2$.
    Expand this as
    \[
        \brack{F^A_p, \psi}
        = \sigma_{k+2}
            + \omega\wedge\sigma_{k}
            + \omega^{2}\wedge\sigma_{k-2}\,.
    \]
    Comparing degrees with \cref{eq:prim_bidegree_decomp_of_dA_squared} 
    gives
    \begin{align*}
        \omega\wedge\paren*{\delpa\delma + \delma\delpa}\psi
            &= \omega\wedge\paren*{ \brack{\Phi^A, \psi} + \sigma_{k} }\,,
        \\
        \omega^2 \wedge \delma^2\psi
            &= \omega^2 \wedge \sigma_{k-2}\,.
    \end{align*}
    We can cancel out the $\omega$ from both sides of the first equality precisely when $1 \leq n - k$.
    Similarly, we can cancel out the $\omega^2$ from the second equality when $2 \leq n - (k - 2)$. If this latter condition does not hold, then $\psi = 0$ and the result follows trivially.
\end{proof}

    The operators $\delpa$ and $\delma$ have formal $L^2$-adjoints, which are denoted by $\delpa^*$ and $\delma^*$, respectively. 
A useful relation is the following:
\begin{lem}
\label{lem:agreement_of_adjoint_on_primitive_forms}
    The operators $\dd_A^*$ and $\delpa^*$ agree on primitive forms,
    \[
        \delpa^* \big|_{P^*(M,\, \ad P)}
        = \dd_A^* \big|_{P^*(M,\, \ad P)}\,.
    \]
\end{lem}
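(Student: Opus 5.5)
Fix a primitive $\psi \in P^k(M, \ad P)$. The plan is to show that $\dd_A^*\psi$ and $\delpa^*\psi$ have the same $L^2$-pairing with every compactly supported test form $\eta \in \Omega^{k-1}(M, \ad P)$. Both sides are local differential operators, so equality of these pairings gives pointwise equality.

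We begin by recording which Lefschetz components $\delpa^*\psi$ can have. By its mapping property, $\delpa$ sends $\omega^r P^{j}$ into $\omega^r P^{j+1}$. Since the Lefschetz decomposition is pointwise orthogonal, the adjoint $\delpa^*$ sends $\omega^r P^{j+1}$ into $\omega^r P^{j}$. In particular, $\delpa^*\psi \in P^{k-1}(M,\ad P)$, so it is itself primitive.

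Next, decompose the test form as $\eta = \eta_0 + \omega\wedge\eta_1 + \omega^2\wedge\eta_2+\cdots$ with each $\eta_j$ primitive. Using \cref{Adecomp}, we have
\[
    \dd_A(\omega^r\wedge\eta_r) = \omega^r\wedge\delpa\eta_r + \omega^{r+1}\wedge\delma\eta_r\,.
\]
The key observation is that, because $\dd_A\omega = \dd\omega = 0$, the operator $\dd_A$ commutes with $L$. Hence $\dd_A(\omega^r\wedge\eta_r)$ lies in $\omega^r P^{k}\oplus\omega^{r+1}P^{k-2}$, and for $r\geq 1$ this space is pointwise orthogonal to $P^k\ni\psi$. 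It follows that
\[
    \Linnerp{\dd_A^*\psi, \eta}
    = \Linnerp{\psi, \dd_A\eta}
    = \Linnerp{\psi, \dd_A\eta_0}
    = \Linnerp{\psi, \delpa\eta_0 + \omega\wedge\delma\eta_0}
    = \Linnerp{\psi, \delpa\eta_0}\,.
\]
The last step again uses orthogonality: $\omega\wedge\delma\eta_0$ is non-primitive, so it pairs to zero with $\psi$.

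On the other side, $\Linnerp{\psi, \delpa\eta} = \Linnerp{\psi, \delpa\eta_0}$ by the mapping property of $\delpa$ and orthogonality. This gives
\[
    \Linnerp{\delpa^*\psi,\eta}=\Linnerp{\dd_A^*\psi,\eta}
\]
for all test forms $\eta$, so $\delpa^*\psi=\dd_A^*\psi$.

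The main point needing care is the claim that the $\omega^r$-components of $\eta$ with $r \geq 1$ do not contribute. Here one must make sure that $\delpa$ and $\delma$ genuinely preserve the power $r$ of $\omega$, rather than only doing so on primitive forms. This is precisely the content of the stated mapping properties $\omega^rP^k\to\omega^rP^{k\pm1}$, which themselves follow from $[\dd_A, L]=0$.
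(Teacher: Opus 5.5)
Your proof is correct and is essentially the paper's argument: both rest on $\dd_A\eta = \delpa\eta + \omega\wedge\delma\eta$ and the fact that the non-primitive term pairs to zero with a primitive form. The paper skips your decomposition of $\eta$ into Lefschetz components, since $\omega\wedge\delma\eta$ lies in the image of $L$ for any $\eta$; also, in your version the space $\omega^r P^{k}\oplus\omega^{r+1}P^{k-2}$ should read $\omega^r P^{k-2r}\oplus\omega^{r+1}P^{k-2r-2}$, which does not affect the argument.
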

\begin{proof}
    Let $\sigma \in P^*(M, \ad P)$.
    Let $\eta \in \Omega^*(M, \ad P)$ be an arbitrary form with compact support. Then,
    \begin{align*}
        \Linnerp{\delpa^* \sigma, \eta}
        &= \Linnerp{\sigma, \delpa\eta}
        = \Linnerp{\sigma, \dd_A\eta}
            - \Linnerp{\sigma, \omega\wedge\delma\eta}\\
        &= \Linnerp{\dd_A^*\sigma, \eta}\,,
    \end{align*}
    where the very last term on the first line vanishes by the orthogonality of the Lefschetz decomposition since $\sigma$ is primitive.
\end{proof}

With the Lefschetz decomposition and its orthogonality, the \gls{ym} functional will decompose cleanly without cross terms, allowing us to study the constituent functionals independently.

\section{Primitive Yang-Mills and Trace Yang-Mills Functionals}
\label{sec:pym:functional}
Let $A$ be a connection. We express the Lefschetz decomposed curvature as
\[
    F^A = F_p^A + \Phi^A\omega\,,
\]
where explicitly,
\begin{align}\label{FAOdecomp}
    F_p^A
    = F^A - \dfrac{1}{n}\om \Lambda F^A \,,
    \quad
    \Phi^A = \dfrac{1}{n} \Lambda F^A\,.
\end{align}
We will often suppress the superscript $A$ in the curvature symbols, denoting the dependence on the connection, whenever convenient.
We call $F_p^A$ the \defterm{primitive curvature}.
Since the Lefschetz decomposition is orthogonal with respect to any compatible metric, the \gls{ym} functional splits into two components,
\begin{equation}\label{eq:lefschetz_functional_decomposition}
    \norm*{F^A}^2 = \norm*{F_p^A}^2 + \norm*{\Phi^A\omega}^2\,.
\end{equation}
The first component is what we call the \glsfmtfull{pym} functional
\begin{align}
    \norm*{F_p^A}^2
    = \int_M \abs*{F_p^A}^2 \vol_g\,.
\end{align}
The second is what we call the \glsfmtfull{tym} functional
\begin{equation}\label{eq:tym:functional}
    \norm*{\Phi^A\omega}^2
    = \int_M \abs*{\Phi^A\omega}^2 \vol_g\,.
\end{equation}
Since the Lefschetz decomposition is at the level of forms, each of these functionals independently inherits the gauge invariance of the \gls{ym} functional.

Let us point out that the \gls{tym} functional is equivalent, up to a constant factor, to the functionals
\begin{align*}
    \norm*{\Phi^A}^2 &\eqdef \int_M \abs*{\Phi^A}^2 \vol_g\,,
    \\
    \norm*{\Lambda F^A}^2 &\eqdef \int_M \abs*{\Lambda F^A}^2 \vol_g\,.
\end{align*}
By \cref{Llambdaf}, we have 
 $\abs*{\omega}^2 %
 = \innerp{1, \Lambda\omega} = n$ and also $\Lambda F^A = (\Lambda\omega)\Phi^A = n\Phi^A\,$.
Therefore,
\[
    \norm*{\Phi^A\omega}^2
    = n\norm*{\Phi^A}^2
    = \frac{1}{n}\norm*{\Lambda F^A}^2\,.
\]
This last functional can be thought of as the $\omega$-trace of the curvature, hence the name: ``trace'' Yang-Mills functional.
We present the functional in the form of \cref{eq:tym:functional} since it is most convenient for us to recognize it as the full non-primitive component of the curvature.

\begin{rmk}[Scaling properties]
    \label{rmk:rescaling_the_pym_functional}
    Under the metric rescaling $g \mapsto \lambda^2g$, the \gls{ym} functional  scales as
    \[
        \norm*{F^A}^2_{\lambda^2 g}
        = \lambda^{d-4}\norm*{F^A}^2_{g}\,.
    \]
In the present context, we have a compatible triple $(\omega, g, J)$.  To preserve compatibility,  we should instead perform the rescaling as
    $(\omega, g, J) \mapsto (\lambda^2 \om, \lambda^2 g, J)$.
Temporarily using a superscript to denote the dependence of the Lefschetz decomposition on the choice of $\omega$, it follows from \cref{FAOdecomp} that
  \[
        F_p^{\lambda^2\omega} = F_p^\omega 
        \quad\text{and}\quad
        \Phi^{\lambda^2\omega} = \lambda^{-2}\Phi^{\omega}\,,
    \]
since $\Lambda$ scales as the inverse of $\om$, or equivalently, 
    \[
        F = F_p^\omega + \Phi^\omega\omega
        = F_p^{\lambda^2\omega}
            + \Phi^{\omega}\lambda^{-2}\paren*{ \lambda^2\omega }\,.
    \]
Therefore, under rescaling by  $(\omega, g, J) \mapsto (\lambda^2 \om, \lambda^2 g, J)$, the \gls{pym} and \gls{tym} functionals scale as
    \begin{equation*}
        \norm*{F_p^{\lambda^2 \omega}}_{\lambda^2g}^2
        = \lambda^{d-4}\norm*{F_p^{\omega}}_{g}^2
        \quad\text{and}\quad
        \norm*{\Phi^{\lambda^2 \omega}(\lambda^2\omega)}_{\lambda^2g}^2
        = \lambda^{d-4} \norm*{\Phi^{\omega}\omega}_{g}^2\,,
    \end{equation*}
which are identical to the scaling of the \gls{ym} functional under just metric rescaling.    This property will be used in \cref{sec:monotonicity_formula} in the proof of the monotonicity formula for the \gls{pym} functional.
\end{rmk}

We will consider next the variation of the \gls{pym} and the \gls{tym} functionals   to derive the Euler-Lagrange equations and study the local properties of their moduli spaces of critical solutions.   
Our treatment here will mirror that of Atiyah-Bott in
\cite[Section~4]{atiyahYangMillsEquationsRiemann1983} for the \gls{ym} functional.

\subsection{Critical Solutions of the Primitive Yang-Mills Functional}
\label{sec:critical_points_of_pym}
To begin, recall that a connection $A$ is \gls{ym} if and only if
$\dd^*_A F^A = 0$. The equation for \gls{pym} connections is similar.

\needspace{2\baselineskip}
\begin{prop}[Primitive Yang-Mills equations]
\label{prop:pym:EL_equations}
    A smooth connection $A$ is a critical solution of the \gls{pym} functional if and only if it satisfies the \defterm{primitive Yang-Mills equations}:
    \[
        \delpa^* F^A_p = 0\,.
    \]
    Equivalently, $\dd_A^* F^A_p = 0$.
\end{prop}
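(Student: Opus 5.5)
We compute the first variation of the \gls{pym} functional along a one-parameter family $A_t = A + t\eta$, with $\eta \in \Omega^1(M, \ad P)$ compactly supported, and then read off the Euler-Lagrange equation. The curvature varies as
\[
    F^{A_t} = F^A + t\,\dd_A\eta + \tfrac{t^2}{2}\brack{\eta,\eta}\,,
\]
so $\left.\ddt\right|_{t=0} F^{A_t} = \dd_A\eta$. The projection $\Pi$ is pointwise linear, algebraic, and independent of the connection. Hence
\[
    \left.\ddt\right|_{t=0} F^{A_t}_p = \Pi\,\dd_A\eta\,.
\]
By \cref{Adecomp}, $\dd_A\eta = \delpa\eta + \omega\wedge\delma\eta$, where $\delpa\eta \in P^2(M,\ad P)$ and $\omega\wedge\delma\eta \in \omega P^0(M,\ad P)$. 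Therefore $\Pi\,\dd_A\eta = \delpa\eta$.

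Next we differentiate the functional:
\[
    \left.\ddt\right|_{t=0}\norm*{F^{A_t}_p}^2
    = 2\Linnerp{F^A_p, \delpa\eta}
    = 2\Linnerp{\delpa^* F^A_p, \eta}\,.
\]
The integration by parts is justified since $\eta$ has compact support. One can equally write this as $2\Linnerp{F_p^A, \dd_A\eta} = 2\Linnerp{\dd_A^*F^A_p,\eta}$, using the orthogonality of the Lefschetz decomposition to discard $\omega\wedge\delma\eta$. Since $\eta$ is arbitrary and $\delpa^*F^A_p$ is smooth, the variation vanishes for all $\eta$ if and only if $\delpa^*F^A_p = 0$. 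The fundamental lemma of the calculus of variations applies here via the $L^2$-pairing, testing against $\eta = \chi\,\delpa^* F^A_p$ with $\chi$ a bump function.

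The equivalence with $\dd_A^*F^A_p = 0$ follows immediately from \cref{lem:agreement_of_adjoint_on_primitive_forms}, because $F^A_p$ is primitive.

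The only step needing care is the identification $\Pi\,\dd_A\eta = \delpa\eta$ on $1$-forms. It holds because every $1$-form is primitive when $n\ge 2$, so \cref{Adecomp} applies with $r=0$, $k=1$. No step appears to be a genuine obstacle. If $M$ is noncompact, we take "critical" to mean with respect to compactly supported variations, so that the functional's variation is finite even when the functional itself is not.
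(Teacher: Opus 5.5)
Your proposal is correct and follows essentially the same route as the paper: vary $A_t = A + t\eta$ with compactly supported $\eta$, identify the first-order change of the primitive curvature as $\delpa\eta$, integrate by parts to get $2\Linnerp{\delpa^*F^A_p,\eta}$, and invoke \cref{lem:agreement_of_adjoint_on_primitive_forms} for the equivalence with $\dd_A^*F^A_p=0$. The paper additionally records the second-order term of the expansion, which it reuses for the Hessian, but that term is not needed for this statement.
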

\begin{proof}
    We parametrize the variation of the connection by
    $A_t = A + t\eta$, with $\eta \in \Omega^1(M, \ad P)$ compactly supported.
    Then,
    \begin{equation}\label{eq:curvature_on_family}
        F^{A_t} = F^A+td_A\eta+\frac{1}{2}t^2[\eta,\eta]\,.
    \end{equation}
    Projecting onto its primitive component, we have
    \begin{equation}\label{eq:primitive_curvature_on_family}
        F_p^{A_t} = F_p^A + t\delpa\eta + \frac{1}{2}t^2\Pi[\eta,\eta]\,.
    \end{equation}
    Therefore,
    \begin{align}\label{infinitesimal of F_p^2}
        \norm*{F^{A_t}_p}^2
        = \norm*{F^A_p}^2
        + 2t\Linnerp{ \delpa\eta, F^A_p }
        +t^2\left(\|\delpa\eta\|^2+ \Linnerp{ F^A_p,\Pi[\eta,\eta] } \right)
        + O(t^3)\,.
    \end{align}
    Since $\eta$ is compactly supported, it follows that
    $A$ is a critical solution if and only if
    $\delpa^* F_p^A = 0$. By \cref{lem:agreement_of_adjoint_on_primitive_forms}
    this is equivalent to $\dd_A^* F^A_p = 0$.
\end{proof}
By \Cref{prop:pym:EL_equations}, the \gls{pym} equations can be written identically to the Yang-Mills equations, with $F^A$ replaced by $F^A_p$.  

\begin{rmk}[Bianchi identity]\label{remark:primitive_laplacian}
    We can Lefschetz decompose the Bianchi identity as follows:
    \begin{align*}
    0=d_A F^A &= d_A \left(F^A_p + \Phi^A \om\right)\\
    &= \delpa F^A_p + \om \left( \delma F^A_p + d_A \Phi^A \right)\,,
    \end{align*}
    where in the second line, we applied the covariant derivative decomposition of \cref{Adecomp}, $d_A = \delpa + \om \,\delma$.  This implies, in particular,
    \begin{align}\label{eq:primitive_bianchi}
    \delpa F^A_p &= 0\,,\\  
    \delma F^A_p &= - d_A \Phi^A \,, \label{eq:omega_bianchi}
    \end{align}
    where to justify the second equation, we have noted that the $\om\w $ map on one-forms is injective when $d=2n\geq 4$.  Notice also that $\delpa F^A_p$ is a primitive three-form.  Hence, \cref{eq:primitive_bianchi} becomes a trivial condition when $d=2n=4$, since there is no $(n+1)$-degree primitive form.   But when $d= 2n>4$, 
    \cref{eq:primitive_bianchi} together with the \gls{pym} equations $\delpa^* F^A_p=0$ imply that $A$ is \gls{pym} over a closed manifold if and only if $F_p^A$ is harmonic with respect to the $\delpa$-Laplacian
        \begin{equation}\label{eq:primitive_laplacian}
            \laplace^A_+ \eqdef \delpa\delpa^* + \delpa^*\delpa\;.
        \end{equation}
    This is analogous to the statement that $A$ is \gls{ym} over a closed manifold if and only if $F$ is harmonic with respect to $\laplace^A=\dd_A \dd_A^* + \dd_A^*\dd_A$. Note that the $\delpa$-Laplacian $\laplace^A_+$ is an elliptic operator since its untwisted version $\laplace_+$ in \cref{LaplaceP} is elliptic.
\end{rmk}

The second variation of the \gls{ym} functional over a closed manifold yields a quadratic form $Q$ defined by its Hessian. We will not distinguish these two objects and simply refer to $Q$ as the Hessian of its respective functional. The Hessian also implicitly defines a Jacobi operator, or stability operator, by
\[
    \frac{1}{2} \frac{\dd^2}{\dd t^2} \norm*{F^{A + t\eta}}^2 \Big|_{t=0}
    = Q(\eta, \eta) \eqdef* \Linnerp{L^A\eta, \eta}\,.
\]
The Hessian of the \gls{ym} functional is given in
\cite[Proposition~4.10]{atiyahYangMillsEquationsRiemann1983} and has the form
\begin{equation}
\label{eq:hessian_ym}
    Q(\eta, \eta) \eqdef \Linnerp{ \dd_A^*d_A\eta+*[*F,\eta],\eta }\,.
\end{equation}
The Hessian for the \gls{pym} functional is similar.
\begin{prop}[Hessian of the primitive Yang-Mills functional]
\label{prop:hessian_of_pym_functional}
    Let $M$ be closed.
    The Hessian of the \gls{pym} functional is given by the quadratic form
    \[
        Q_p(\eta,\eta)
        = \Linnerp{ L_p^A \eta, \eta }\,,
    \]
    where $\eta \in \Omega^1(M, \ad P)$ and
    $L_p^A$ is the corresponding Jacobi operator
    \[
        L_p^A\eta \eqdef \delpa^*\delpa\eta+*[*F^A_p,\eta]\,.
    \]
\end{prop}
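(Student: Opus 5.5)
We start from the expansion \cref{infinitesimal of F_p^2} in the proof of \cref{prop:pym:EL_equations}. Since $M$ is closed, take $\eta \in \Omega^1(M, \ad P)$ arbitrary. Half the coefficient of $t^2$ gives
\[
    Q_p(\eta,\eta)
    = \tfrac12\Big( 2\norm*{\delpa\eta}^2 + 2\Linnerp{F^A_p, \tfrac12\Pi[\eta,\eta]}\Big)\cdot\tfrac{1}{1}
    = \norm*{\delpa\eta}^2 + \tfrac12\Linnerp{F^A_p, \Pi[\eta,\eta]}\,.
\]
This follows from $\tfrac{1}{2}\tfrac{d^2}{dt^2}(c_0 + c_1 t + c_2 t^2) = c_2$, with $c_2 = \norm*{\delpa\eta}^2 + \Linnerp{F^A_p,\Pi[\eta,\eta]}$. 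I will recheck the factor by expanding $\norm{F_p^A + t\delpa\eta + \tfrac12 t^2\Pi[\eta,\eta]}^2$ directly. The $t^2$ coefficient is $\norm{\delpa\eta}^2 + \Linnerp{F_p^A,\Pi[\eta,\eta]}$, so $Q_p = \norm{\delpa\eta}^2 + \Linnerp{F^A_p,\Pi[\eta,\eta]}$. The first term equals $\Linnerp{\delpa^*\delpa\eta,\eta}$ by the definition of the formal adjoint, with no boundary terms since $M$ is closed.

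For the second term, primitivity of $F^A_p$ and orthogonality of the Lefschetz decomposition let us drop $\Pi$:
\[
\Linnerp{F^A_p,\Pi[\eta,\eta]} = \Linnerp{F^A_p,[\eta,\eta]}.
\]
This puts us exactly in the situation of the Atiyah--Bott computation for \cref{eq:hessian_ym}, with $F^A$ replaced by $F^A_p$. We then rewrite this term using the wedge/bracket conventions. We have
\[
\Linnerp{[\eta,\eta],F_p} = \int_M [\eta,\eta]\wedge *F_p = \int_M \eta\wedge[\eta,*F_p]
\]
by \cref{eq:bracket_wedge_identity}. Next, $[\eta,*F_p]$ is a $(d-1)$-form, and we write it as $\pm *(*[\eta,*F_p])$. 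Using $*^2 = (-1)^{k}$ on $k$-forms in even dimension, together with graded antisymmetry $[\eta,*F_p] = \pm[*F_p,\eta]$ (both factors odd/even degree bookkeeping), we get
\[
\int_M \eta\wedge[\eta,*F_p] = \Linnerp{\eta, *[*F_p,\eta]}.
\]
The sign here must come out to exactly the one in Atiyah--Bott's formula. Since the computation is purely algebraic and identical to theirs, I would simply cite \cite[Proposition~4.10]{atiyahYangMillsEquationsRiemann1983} with $F$ replaced by the $2$-form $F_p$: their manipulation uses only that $F$ is an $\ad P$-valued $2$-form, not the Bianchi identity. Should a factor of $2$ discrepancy arise, it is absorbed consistently with their normalization $Q = \tfrac12\tfrac{d^2}{dt^2}$, because our $t^2$ coefficient has the same form as theirs.

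The main obstacle is purely bookkeeping: the signs and factors of $\tfrac12$ in identifying $\Linnerp{F_p,[\eta,\eta]}$ with $\Linnerp{*[*F_p,\eta],\eta}$. I would handle this by invoking the Atiyah--Bott identity verbatim rather than recomputing it. Combining the two terms then yields $Q_p(\eta,\eta) = \Linnerp{\delpa^*\delpa\eta + *[*F^A_p,\eta],\eta}$, as claimed.
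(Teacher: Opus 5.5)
Your proposal is correct and follows the paper's route. You read off $Q_p(\eta,\eta)=\norm{\delpa\eta}^2+(F^A_p,\Pi[\eta,\eta])$ from \cref{infinitesimal of F_p^2}, drop $\Pi$ by Lefschetz orthogonality, move the bracket with \cref{eq:bracket_wedge_identity}, and rewrite as $(\eta,*[*F^A_p,\eta])$. The only real difference is that you get $\norm{\delpa\eta}^2=(\delpa^*\delpa\eta,\eta)$ directly from the definition of the formal adjoint, rather than via $\dd_A^*$ and \cref{lem:agreement_of_adjoint_on_primitive_forms}.

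The sign you defer to Atiyah--Bott is a two-line check you should write out. Graded antisymmetry gives $[\eta,*F^A_p]=-[*F^A_p,\eta]$, since $\eta$ has odd degree and $*F^A_p$ has even degree $d-2$. Also $**=-1$ on the $(d-1)$-form $[*F^A_p,\eta]$, so $\int_M\eta\wedge[\eta,*F^A_p]=\int_M\eta\wedge*\big(*[*F^A_p,\eta]\big)$. You should also delete the erroneous $\tfrac12$ in your first display and the ``factor of $2$'' hedge, since your own recheck already fixes the coefficient.
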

\begin{proof}
    By \cref{infinitesimal of F_p^2},
    $$
        Q_p(\eta,\eta)=
          \frac{1}{2} \frac{\dd^2}{\dd t^2} \norm*{F^{A + t\eta}_p}^2 \Big|_{t=0}=\|\delpa\eta\|^2 + \Linnerp{ F_p^A,\Pi[\eta,\eta] }\,.
    $$
    As $\delpa\eta$ is primitive, it is orthogonal to $\omega\, \delma\eta$.
    Since $M$ is closed,
    \[
        \norm*{\delpa\eta}^2
        = \Linnerp{\delpa\eta,d_A\eta}
        = \Linnerp{\dd_A^*\delpa\eta,\eta}\,.
    \]
    For the remaining term,
    \begin{align*}
        \Linnerp{ \Pi[\eta,\eta], F_p^A }
        & %
        = \Linnerp{ [\eta,\eta], F_p^A}
           & \paren*{ \text{Lefschetz orthogonality} }
        \\* &
        = \int_M [\eta,\eta]\wedge *F_p^A
        \\ &
        = \int_M \eta\wedge[\eta,*F_p^A]
            & \text{\Cref{eq:bracket_wedge_identity}}
        \\ &
        = -\int_M \eta\wedge **^{-1}[*F_p^A,\eta]
        \\ &
        = \int_M \eta\wedge **[*F_p^A,\eta]
           & \paren*{ \text{$*^{-1} = -*$ on odd-degree forms} }
        \\ &
        = \Linnerp{ \eta, *[*F_p^A,\eta] }
    \,.\end{align*}
    Therefore,
    \[
        L_p^A \eta = \dd_A^* \delpa\eta + *[*F^A_p,\eta]\,.
    \]
    The result then follows from
    \cref{lem:agreement_of_adjoint_on_primitive_forms}.
\end{proof}

At a critical point, the Jacobi operator describes the formal tangent space of \gls{ym} connections by its kernel. In particular, if $A_t$ is a path of \gls{ym} connections starting at $A$, then for
$\eta = \ddt A_t \big|_{t=0} \in \Omega^1(M, \ad P)$, it must be that
$\eta \in \ker L^A$.
The corresponding statement for the \gls{pym} functional is also true.
\begin{thm}[Formal tangent space of \glsfmtlong{pym} connections]
\label{thm:formal_tangent_space_of_pym_connections}
    Let $M$ be closed.
    If $\eta \in \Omega^1(M, \ad P)$ is a tangent vector at $A$ to the space of \gls{pym} connections, then 
    \[
        \eta \in \ker L_p^A\,.
    \]
    In particular, the space of \gls{pym} connections modulo gauge equivalence is finite-dimensional.
\end{thm}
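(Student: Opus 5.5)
Take a smooth path $A_t$ of \gls{pym} connections with $A_0 = A$ and $\eta = \ddt A_t\big|_{t=0}$, and differentiate the \gls{pym} equation $\delpa[A_t]^* F_p^{A_t} = 0$ at $t=0$. By \cref{lem:agreement_of_adjoint_on_primitive_forms} this is the same as $\dd_{A_t}^* F_p^{A_t} = -*\dd_{A_t}* F_p^{A_t} = 0$, which is easier to differentiate because $*$ and $\Pi$ do not depend on $t$. From \cref{eq:primitive_curvature_on_family} we get $\ddt F_p^{A_t}\big|_{t=0} = \delpa\eta$. Also $\ddt \dd_{A_t}\big|_{t=0} = [\eta,\cdot\,]$. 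So the linearization reads
\[
    0 = \ddt\Big|_{t=0}\paren*{-*\dd_{A_t}*F_p^{A_t}}
    = -*\dd_A*\delpa\eta - *[\eta, *F_p^A]
    = \dd_A^*\delpa\eta - *[\eta,*F_p^A]\,.
\]
Since $\eta$ is a $1$-form and $*F_p^A$ has even degree $d-2$, the graded symmetry of the bracket gives $[\eta,*F_p^A] = -[*F_p^A,\eta]$ (check the sign convention against the computation in \cref{prop:hessian_of_pym_functional}). Applying \cref{lem:agreement_of_adjoint_on_primitive_forms} once more, since $\delpa\eta$ is primitive, the equation becomes $\delpa^*\delpa\eta + *[*F_p^A,\eta] = L_p^A\eta = 0$.

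For finite-dimensionality, the plan mirrors Atiyah--Bott. Modulo gauge, we impose the Coulomb slice condition $\dd_A^*\eta = 0$; infinitesimal gauge directions $\dd_A\xi$ lie in $\ker L_p^A$ automatically because the functional is gauge invariant. The formal tangent space of the moduli space is then contained in $\ker\paren*{L_p^A + \dd_A\dd_A^*}$ acting on $\Omega^1(M,\ad P)$. The main step is to show this operator is elliptic. Its principal symbol is that of $\delp^*\delp + \dd\dd^*$ on $1$-forms, where $\delp$ on $1$-forms is $\Pi\circ \dd$. Since $\dd^* = \delp^*$ on $P^0 = \Omega^0$, this is exactly the symbol of $\Delta_+$ in \cref{LaplaceP} restricted to $P^1 = \Omega^1$ (every $1$-form is primitive).

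We then argue that $\Delta_+$ is elliptic on $P^1$, by the ellipticity of the Tseng--Yau complex at degree $1 \le n-1$; this holds since $n\ge 2$. Concretely, for $\xi\ne 0$ the symbol sequence $P^0\to P^1\to P^2$, given by $\xi\wedge$ followed by $\Pi(\xi\wedge\cdot)$, is exact. The lower-order term $*[*F_p^A,\cdot\,]$ does not affect the symbol. Hence $L_p^A + \dd_A\dd_A^*$ is elliptic on the closed manifold $M$ and has finite-dimensional kernel.

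I expect the main obstacle to be this symbol computation: confirming that the Tseng--Yau ellipticity gives an invertible symbol at $P^1$ for the combined operator, including the $n=2$ case where $P^2$ is the top primitive degree. If needed, I would check it directly. One computes $\abs{\Pi(\xi\wedge v)}^2 + \abs{\innerp{\xi,v}}^2$ and shows it is at least $c\abs\xi^2\abs v^2$, because the only way for $\xi\wedge v$ to be proportional to $\omega$ is $\xi\wedge v = 0$ (rank reasons when $n\ge2$). That forces $v \parallel \xi$, and then the second term is nonzero.
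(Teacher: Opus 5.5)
Your proposal is correct and follows the paper's proof essentially verbatim. The paper also differentiates $*\dd_{A_t}*F_p^{A_t}$ to obtain $-L_p^A\eta$. It then imposes the slice condition $\delpa^*\eta = \dd_A^*\eta = 0$, so the tangent space lies in the kernel of the elliptic operator $\laplace_+^A + *[*F_p^A,\cdot\,]$, which is exactly your $L_p^A + \dd_A\dd_A^*$ on one-forms.

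Your direct symbol check, that a decomposable $\xi\wedge v$ cannot be a nonzero multiple of $\omega$ when $n\ge 2$, explicitly confirms the ellipticity that the paper only cites. One small correction: the identity you want is $\delp = \dd$ on $\Omega^0$, so that $\delp^* = \dd^*$ on $\Omega^1$; it is not an identity on $P^0$.
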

\begin{proof}
    Let $A_t=A+t\eta+O(t^2)$. Since
    $\ddt (\dd_{A_t}) \big|_{t=0} = [\eta, \cdot]$ and
    $\ddt F^{A_t}_p \big|_{t=0} = \delpa\eta$, we have
    \[
        \ddt \paren*{ *\,\dd_{A_t}*F^{A_t}_p } \Big|_{t=0}
        = *\,\dd_A*(\delpa\eta) + *[\eta,*F^A_p]
        = -L^A_p\eta\,.
    \]
    Thus, if $A_t$ is a family of \gls{pym} connections, then $\eta \in \ker L_p^A$.
    
    For the second statement, note the connections $A_t$ are all gauge equivalent to $A$ if and only if $\eta$ is $\dd_A$-exact.
    Since $\dd_A = \delpa$ when acting on zero-forms, the complement of the $\delpa$-exact one-forms is precisely the space of $\dd_A^*$-closed forms, which by \cref{lem:agreement_of_adjoint_on_primitive_forms} is equivalent to
    $\delpa^*$-closed forms. Thus, the formal tangent space of \gls{pym} connections up to gauge equivalence can be identified with the space of
    $\eta\in\Omega^1(M,\ad P)$ satisfying $L^A_p\eta=0$ and $\delpa^*\eta=0$.
    Equivalently,
    \[
        \delpa^*\eta = 0\,,
        \qquad\text{and}\qquad
        \laplace_+^A \eta + *[*F_p^A,\eta] = 0\,,
    \]
    where $\laplace_+^A$ is the $\delpa$-Laplacian in \cref{eq:primitive_laplacian}.
    Since $\laplace_+^A$ is elliptic, it follows that the moduli space of
    \gls{pym} connections is finite-dimensional.
\end{proof}

\subsection{Critical Solutions of the Trace Yang-Mills Functional}
\label{sec:the_three_functionals}
Consider now the \gls{tym} functional $\norm{\Phi^A\omega}^2$.
By arguments similar to those for the \gls{pym} functional, we obtain the following.
\begin{prop}[Euler-Lagrange equations for the \glsfmtlong{tym} functional]
\label{prop:tym:EL_equations}
    Let $A$ be a connection over $M$. The following are equivalent:
    \begin{center}
        \begin{tblr}{
            colspec = {ll},
            row{1-Z} = {mode = math}
        }
            \mathrm{(i)}\ A \text{ is a critical solution of the \gls{tym} functional}\,;\qquad
            &
            \mathrm{(iv)}\ \dd_A \Phi^A = 0\,;
            \\
            \mathrm{(ii)}\ \dd_A^*(\Phi^A \omega) = 0\,;
            &
            \mathrm{(v)}\ \delpa\Phi^A = 0\,;
            \\
            \mathrm{(iii)}\ \delma^*\Phi^A = 0\,;
            &
            \mathrm{(vi)}\ \dd_A F_p^A = 0\,.
        \end{tblr}
    \end{center}
\end{prop}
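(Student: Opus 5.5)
The plan is to prove (i)$\Leftrightarrow$(ii) by a first-variation computation, and then link the remaining conditions through the Lefschetz decomposition and the decomposed Bianchi identity of \cref{remark:primitive_laplacian}.

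\textbf{(i)$\Leftrightarrow$(ii).} Take $A_t = A+t\eta$ with $\eta$ compactly supported. By \cref{eq:curvature_on_family}, the non-primitive part of $F^{A_t}$ is
\[
\Phi^{A_t}\omega = \Phi^A\omega + t(1-\Pi)\dd_A\eta + O(t^2).
\]
Moreover $(1-\Pi)\dd_A\eta = \omega\wedge\delma\eta$ by \cref{Adecomp}. The first variation is therefore
\[
2\Linnerp{\omega\wedge\delma\eta,\Phi^A\omega} = 2\Linnerp{\dd_A\eta,\Phi^A\omega},
\]
where the equality uses Lefschetz orthogonality. This vanishes for all $\eta$ iff $\dd_A^*(\Phi^A\omega)=0$.

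\textbf{(ii)$\Leftrightarrow$(iii).} The same computation can be written as $2\Linnerp{\delma\eta,\Lambda(\Phi^A\omega)} = 2n\Linnerp{\delma\eta,\Phi^A}$, using that $\Lambda$ is the adjoint of $L$. So (i) is also equivalent to $\delma^*\Phi^A=0$.

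\textbf{(ii)$\Leftrightarrow$(iv)$\Leftrightarrow$(v).} Since $\dd_A^*=-*\dd_A*$, condition (ii) is equivalent to $\dd_A*(\Phi^A\omega)=0$. We have $*\omega = \omega^{n-1}/(n-1)!$, which is closed, so
\[
\dd_A*(\Phi^A\omega) = \dd_A\Phi^A\wedge\omega^{n-1}/(n-1)!.
\]
The map $L^{n-1}$ is injective on $1$-forms, being an isomorphism $\Omega^1\to\Omega^{2n-1}$. Hence (ii)$\Leftrightarrow$(iv). On $0$-forms, $\delma$ vanishes because there are no forms of degree $-1$, so $\dd_A=\delpa$ there, giving (iv)$\Leftrightarrow$(v).

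\textbf{(iv)$\Leftrightarrow$(vi).} Use the Bianchi identity $\dd_A F^A = 0$. It gives
\[
\dd_A F_p^A = -\dd_A(\Phi^A\omega) = -\dd_A\Phi^A\wedge\omega,
\]
since $\dd\omega=0$. Because $L$ is injective on $1$-forms for $n\ge2$, $\dd_AF_p^A=0$ iff $\dd_A\Phi^A=0$. This step could equally be read off from \cref{eq:primitive_bianchi} and \cref{eq:omega_bianchi}.

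The main point needing care is the adjoint identifications. We need that the Hodge star commutes with the bundle part, and that $L$ and $\Lambda$ are pointwise adjoint on twisted forms; both hold because these operators act only on the form factor. We also need the injectivity statements for $L^r$ on low-degree forms, which are the standard Lefschetz facts already recorded in \cref{tsympdiff}. Everything else is routine.
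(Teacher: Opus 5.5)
Your proposal is correct and follows essentially the same route as the paper. That route is: the first-variation computation with Lefschetz orthogonality for (i)$\Leftrightarrow$(ii) and (i)$\Leftrightarrow$(iii); the identity $*\omega = \omega^{n-1}/(n-1)!$ with injectivity of $L^{n-1}$ on one-forms for (ii)$\Leftrightarrow$(iv); the vanishing of $\delma$ on zero-forms for (iv)$\Leftrightarrow$(v); and the Bianchi identity with injectivity of $L$ on one-forms for (iv)$\Leftrightarrow$(vi).
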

Before proving this, it will be useful to expand $\Phi^{A_t}\omega$ in terms of $t$.
Let $\eta \in \Omega^1(M, \ad P)$ be compactly supported.
Set $A_t = A + t\eta$.
Then, using \cref{eq:curvature_on_family,eq:primitive_curvature_on_family}, we have 
\begin{equation}\label{eq:trace_curvature_to_higher_order}
    \Phi^{A_t}\omega = F^{A_t} - F_p^{A_t}
    = \Phi^A\omega + t\,\omega\,\delma \eta
        + \frac{1}{2}t^2\paren*{\brack{\eta, \eta} - \Pi\brack{\eta, \eta}}\,.
\end{equation}
The order $t^2$ term is precisely the non-primitive component of the two-form $\brack{\eta, \eta}$.
Similar to \cref{FAOdecomp}, this non-primitive component can be expressed concisely as 
\[
    \brack{\eta, \eta} - \Pi\brack{\eta, \eta}
    = \frac{1}{n} \omega \Lambda \brack{\eta, \eta}\,.
\]
Hence, we find
\begin{align}\label{eq:Phinormsq}
    \norm*{\Phi^{A_t}\om}^2
    = \norm*{\Phi^A\om}^2 + & 2t\Linnerp{\Phi^A\om, \om\, \delma\eta}
    \\*\nonumber &
    + t^2\left(\|\om\,\delma\eta\|^2+ \Linnerp{ \Phi^A\om, \frac{1}{n}\omega \Lambda[\eta,\eta] } \right)
            + O(t^3)\,.
\end{align}
From this we can read off the Euler-Lagrange equations and the Hessian.
\begin{proof}[Proof of \Cref{prop:tym:EL_equations}]
    (i) $\Longleftrightarrow$ (ii):
From \cref{eq:Phinormsq}, we have
    \begin{equation}\label{eq:first_variation_tym}
        \frac{1}{2} \frac{\dd}{\dd t} \norm*{\Phi^{A + t\eta}\omega}^2 \Big|_{t=0}
        = \Linnerp{\Phi^A\omega, \omega\, \delma \eta}\,.
    \end{equation}
    Lefschetz orthogonality implies
    \[
        \Linnerp{\Phi^A\omega, \omega\, \delma \eta}
        = \Linnerp{\Phi^A\omega, \dd_A \eta}
        = \Linnerp{\dd^*_A (\Phi^A\omega), \eta}\,.
    \]

    (i) $\Longleftrightarrow$ (iii): Alternatively, from
    \Cref{eq:first_variation_tym} and using the fact that $\Lambda\omega \Phi^A= n \Phi^A$, we obtain
    \[
        \frac{1}{2} \frac{\dd}{\dd t} \norm*{\Phi^{A + t\eta}\omega}^2 \Big|_{t=0}
        = \Linnerp{\Phi^A\omega, \omega\, \delma \eta}
        = n\Linnerp{\Phi, \delma \eta}
        = n\Linnerp{\delma^* \Phi, \eta}\,.
    \]

    (ii) $\Longleftrightarrow$ (iv):
    Applying $d_A^* = - * d_A * $ and the relation $*\, \om = \om^{n-1}/(n-1)!$, which holds since $g$ is compatible with $\omega$,  we have
    \[
        \dd_A^*(\Phi^A\omega)
        =-\frac{1}{(n-1)!}*d_A(\omega^{n-1}\Phi^A)
        =-\frac{1}{(n-1)!}* \paren*{ \omega^{n-1}  d_A\Phi^A }\,.
    \]
    So $A$ is a critical solution if and only if
    $\omega^{n-1} d_A\Phi^A=0$.
By the injectivity of the map $\omega^{n-1} : \Omega^1(M)\to\Omega^{2n-1}(M)$, this is equivalent to $\dd_A\Phi^A = 0$.

    (iv) $\Longleftrightarrow$ (v):
    This is just from expanding $\dd_A = \delpa + \omega \wedge \delma$ and recognizing that, since $\delma$ lowers the degree of a form by one, $\Phi^A$ is trivially $\delma$-closed.

    (iv) $\Longleftrightarrow$ (vi):
    This equivalence follows immediately from the Bianchi identity conditions in \cref{eq:primitive_bianchi,eq:omega_bianchi}, i.e.,  
    \[
        \dd_A F^A = \dd_A F_p^A + \omega \, \dd_A \Phi^A = 0\,.
        \qedhere
    \]
\end{proof}

Combining the Euler-Lagrange equations for each functional (\cref{prop:pym:EL_equations,prop:tym:EL_equations}, see also \cref{table:critical_points_of_functionals}), we obtain the following useful criterion for a connection to be a critical solution of all three functionals.
\begin{cor}[2 out of 3 property]
\label{lem:2_out_of_3}
    If $A$ is a critical solution of two of the three functionals, 
    $\norm{F^A}^2,\norm{F_p^A}^2$, and $\norm{\Phi^A\omega}^2$, then it is a critical solution of all three.
\end{cor}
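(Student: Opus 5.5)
The argument uses only linearity of $\dd_A^*$ together with the Euler-Lagrange equations already derived. By \cref{prop:pym:EL_equations} and \cref{prop:tym:EL_equations}, together with the classical Yang-Mills equations, the three criticality conditions are
\[
    \dd_A^* F^A = 0\,, \qquad \dd_A^* F^A_p = 0\,, \qquad \dd_A^*(\Phi^A\omega) = 0\,,
\]
respectively. Since $F^A = F^A_p + \Phi^A\omega$ and $\dd_A^*$ is a linear operator for a fixed connection $A$, we have the identity
\[
    \dd_A^* F^A = \dd_A^* F^A_p + \dd_A^*(\Phi^A\omega)\,.
\]

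The proof then splits into three cases, according to which pair of conditions is assumed. If $A$ is \gls{pym} and \gls{tym}, the two terms on the right vanish, so $\dd_A^* F^A = 0$ and $A$ is \gls{ym}. If $A$ is \gls{ym} and \gls{pym}, the identity gives $\dd_A^*(\Phi^A\omega) = \dd_A^* F^A - \dd_A^* F^A_p = 0$, so $A$ is \gls{tym} by condition (ii) of \cref{prop:tym:EL_equations}. If $A$ is \gls{ym} and \gls{tym}, the same subtraction gives $\dd_A^* F^A_p = 0$, so $A$ is \gls{pym} by \cref{prop:pym:EL_equations}.

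There is no real obstacle here. The only point to check is that each functional's criticality is \emph{equivalent} to the corresponding $\dd_A^*$ equation, and not merely implied by it, since in each case we pass from the equation back to criticality. This equivalence is exactly what the cited propositions provide. For the Yang-Mills functional it is the standard Atiyah-Bott statement, which holds because the variation is taken over compactly supported $\eta$.
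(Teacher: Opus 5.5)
Your proposal is correct and is essentially the paper's argument: the corollary follows from the three Euler-Lagrange equations $\dd_A^*F^A=0$, $\dd_A^*F^A_p=0$, $\dd_A^*(\Phi^A\omega)=0$, combined with $F^A=F^A_p+\Phi^A\omega$ and linearity of $\dd_A^*$. The paper also notes a second check via the $\delma$-form of the equations in \cref{table:updated_table_for_critical_points}, but your route is the primary one.
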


\begin{ex}[Zeros of the functionals]
    \label{ex:zeros}
    Flat connections $F^A = 0$ are zeros of all three functionals.
    In the same spirit, $\omega$-flat connections are the zeros of the
    \gls{pym} functional, and so they are \gls{pym}.
    Their curvature is given by $F^A = \Phi^A\omega$, and the Bianchi identity implies that they are \gls{tym}. Hence, $\omega$-flat connections are also critical solutions of all three functionals.
    The zeros of the \gls{tym} functional, trace-flat connections, have curvature that is entirely primitive, $F^A = F_p^A$.
    These connections are not necessarily \gls{ym} or \gls{pym} in general as can be seen in the family of trace-flat solutions in \cref{ex:closed_kahler_nPYM_not_YM}.    
\end{ex}

Of note, \cref{prop:tym:EL_equations} shows that when $A$ is \gls{tym},
then $\Phi^A$ is covariantly constant.
This places a strong constraint on $\Phi^A$. 
In particular, since $\Phi^A \in \Omega^0(M, \ad P)$,
$\nabla^A\Phi^A = \dd_A \Phi^A = 0$.
Thus, $A$ is reducible whenever $\Phi^A$ is nonzero and $G$ is semisimple.

For the second variation, we have the following.
\begin{prop}[Hessian of the \glsfmtlong{tym} functional]
    Let $M$ be closed.
    The Hessian of the \gls{tym} functional is
    \[
        Q_{\omega}(\eta,\eta)
        = \Linnerp{ L_\omega^A\eta, \eta }\,,
    \]
    where $\eta \in \Omega^1(M, \ad P)$ and
    \[
        L_\omega^A \eta
        \eqdef \dd_A^*(\omega \,\delma\eta) + *[*(\Phi^A\omega),\eta]
    \]
    is the corresponding Jacobi operator.
    In particular, if $\eta$ is a tangent vector at $A$ to the space of \gls{tym} connections, then $L^A_\omega\eta = 0$.
\end{prop}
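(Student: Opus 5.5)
The plan is to mirror the proof of \cref{prop:hessian_of_pym_functional}, reading the second-order coefficient directly off the expansion \cref{eq:Phinormsq}. By that expansion,
\[
    Q_\omega(\eta,\eta)
    = \frac{1}{2}\frac{\dd^2}{\dd t^2}\norm*{\Phi^{A+t\eta}\omega}^2\Big|_{t=0}
    = \norm*{\omega\,\delma\eta}^2
    + \Linnerp{\Phi^A\omega, \tfrac{1}{n}\omega\Lambda\brack{\eta,\eta}}\,.
\]
I will treat the two terms separately.

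For the first term, note that $\omega\,\delma\eta$ is the non-primitive part of $\dd_A\eta$. Lefschetz orthogonality then gives
\[
    \norm*{\omega\,\delma\eta}^2 = \Linnerp{\omega\,\delma\eta, \dd_A\eta}\,.
\]
Since $M$ is closed, this equals $\Linnerp{\dd_A^*(\omega\,\delma\eta),\eta}$.

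For the second term, $\frac1n\omega\Lambda\brack{\eta,\eta}$ is exactly the non-primitive component of $\brack{\eta,\eta}$. As $\Phi^A\omega$ is itself non-primitive, orthogonality lets me replace it by $\brack{\eta,\eta}$, giving $\Linnerp{\Phi^A\omega,\brack{\eta,\eta}}$. From here I repeat verbatim the chain of identities in the proof of \cref{prop:hessian_of_pym_functional}, with $F_p^A$ replaced by $\Phi^A\omega$. Those steps are: writing the inner product as $\int_M [\eta,\eta]\wedge *(\Phi^A\omega)$; applying \cref{eq:bracket_wedge_identity}; and using $*^{-1}=-*$ on odd-degree forms. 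The result is $\Linnerp{\eta, *[*(\Phi^A\omega),\eta]}$. Summing the two terms gives the stated $L^A_\omega$.

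For the final claim, I differentiate the Euler--Lagrange equation $\dd_{A_t}^*(\Phi^{A_t}\omega)=0$ along a path $A_t=A+t\eta+O(t^2)$ of \gls{tym} connections, exactly as in \cref{thm:formal_tangent_space_of_pym_connections}. Two ingredients are needed. First, by \cref{eq:trace_curvature_to_higher_order}, $\ddt \Phi^{A_t}\omega|_{t=0}=\omega\,\delma\eta$. Second, $\ddt \dd_{A_t}|_{t=0}=[\eta,\cdot]$. Together these give
\[
    \ddt\paren*{-*\dd_{A_t}*\Phi^{A_t}\omega}\Big|_{t=0}
    = \dd_A^*(\omega\,\delma\eta) - *[\eta,*(\Phi^A\omega)]\,.
\]
The last step is to check that $-*[\eta,*(\Phi^A\omega)] = *[*(\Phi^A\omega),\eta]$. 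This holds by the graded commutativity of the bracket, since $\eta$ is a $1$-form and $*(\Phi^A\omega)$ has degree $2n-2$, which is even. Hence the derivative equals $L^A_\omega\eta$, and it vanishes along the path.

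I expect the only delicate point to be sign bookkeeping: the sign of the bracket swap above, and confirming that the PYM computation's manipulation transfers unchanged when the $2$-form $F_p^A$ is replaced by $\Phi^A\omega$. Since both are $2$-forms, I expect the degrees, and hence the signs, to match exactly.
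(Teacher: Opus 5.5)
Your proposal is correct and follows the paper's proof essentially step for step. You make the same reading of the $t^2$ coefficient of the expansion of $\norm*{\Phi^{A_t}\omega}^2$, use the same Lefschetz-orthogonality reductions on both terms, and reuse the bracket/Hodge-star chain from the primitive Yang-Mills Hessian; the only cosmetic difference is in the final claim, where you differentiate $\dd_{A_t}^*(\Phi^{A_t}\omega)$ directly (your graded-commutativity sign check is right), whereas the paper writes $\Phi^{A_t}\omega = F^{A_t} - F_p^{A_t}$ and reads off $L^A\eta - L_p^A\eta = L_\omega^A\eta$ from the already-known linearizations.
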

\begin{proof}
The proof is analogous to the proofs of \cref{prop:hessian_of_pym_functional,thm:formal_tangent_space_of_pym_connections}.
    By \cref{eq:trace_curvature_to_higher_order},
\[Q_\omega(\eta, \eta)
=\frac{1}{2} \frac{\dd^2}{\dd t^2} \norm*{\Phi^{A + t\eta}\om}^2 \Big|_{t=0}
= \norm*{\omega\,\delma \eta}^2
+ \Linnerp{\frac{1}{n}\omega \Lambda[\eta,\eta] , \Phi^A\omega}\,.
\]
    Using Lefschetz orthogonality, this first term is
    \[
        \norm*{\omega\,\delma \eta}^2
        = \Linnerp{\omega\,\delma \eta, \dd_A \eta}
        = \Linnerp{\dd_A^*(\omega\,\delma \eta), \eta}\,.
    \]
    The remaining term is
    \[
        \Linnerp{ \frac{1}{n}\omega \Lambda[\eta,\eta] , \Phi^A\omega }
        = \Linnerp{ [\eta,\eta], \Phi^A\omega}
        = \Linnerp{ \eta, *[*(\Phi^A\omega),\eta]}\,.
    \]
    For the last claim, let $A_t = A + t\eta + O(t^2)$, and let $L^A$ be the Jacobi operator associated to the \gls{ym} functional in
    \cref{eq:hessian_ym}.
    Then
    \[
        \ddt \dd_{A_t}^*\Phi^{A_t}\omega
        = \ddt \dd_{A_t}^*\paren*{ F^{A_t} - F_p^{A_t} }
        = L^A\eta - L^A_p\eta = L^A_\omega\eta\,.
    \]
    Thus, if $A_t$ is a family of \gls{tym} connections, then $\eta \in \ker L_\omega^A$.
\end{proof}

\begin{rmk}[Nonellipticity of the \glsfmtlong{tym} Jacobi operator]
\label{rmk:nonellipticity_of_tym_jacobi_operator}
    Unlike that of the \gls{pym} functional, the Jacobi operator of the \gls{tym} functional is not elliptic even after a gauge-fixing condition.
    To see this, we compute its principal symbol:
    \begin{align*}
        \sigma_2(L^A_\omega)(\xi)\eta
        & %
        = \sigma_1(\dd_A^*) \circ
            \sigma_1\paren*{ \omega\delma }(\xi)\eta
        \\ &
        = \sigma_1(\dd_A^*) \circ
            \sigma_1\paren*{ \frac{1}{n} L\Lambda \dd_A }(\xi)\eta
        \\ &
        = \sigma_1(\dd_A^*) \circ \frac{1}{n} L\Lambda
            \circ \sigma_1(\dd_A)(\xi)\eta
        \\ &
        = -\frac{1}{n}\Lambda \paren*{ \xi \wedge \eta }
            \iota_{\xi^\sharp}\omega
    \,.\end{align*}
    As a map,
    $\sigma_2(L^A_\omega)(\xi)
    : T^*_xM \otimes \mathfrak{g} \to T^*_xM \otimes \mathfrak{g}$
    has rank $\dim G$ on a $(2n \dim G)$-dimensional space, and hence fails to be an isomorphism.
    A gauge-fixing condition does not repair this either, and so $L^A_\omega$ is not elliptic.
    Thus, the formal tangent space is not given by the kernel of an elliptic operator.
\end{rmk}

It is not difficult to see that the moduli space of \gls{tym} connections is generically infinite-dimensional.  We demonstrate this with a simple example on $T^4$.

\begin{ex}[Infinite-dimensional family of trace-flat solutions of the \gls{tym} functional]\label{ex:TYM:inf_dim_family}
Consider the trivial $U(1)$-bundle over $T^4$ with coordinates $(x_i, y)$ where $x_i$, for $i=1,2,3,4$, are $\R^4$ coordinates with identifications $x_i\sim x_i + 2\pi$, and $y\sim y+2\pi$ is the coordinate on the circle fiber. 

It is not difficult to construct trace-flat ($\Phi^A=0$) solutions of the \gls{tym} functional.  For instance, let $u \in C^\infty(T^2)$ be a function of only $x_1$ and $x_2$.  Consider the following $U(1)$-connection with corresponding curvature:
\begin{align}\label{FAuu}
        A_{u} = \dd y + u(x_1, x_2)\dx_4\,,
    \qquad
        F^{A_{u}}
        = \dd u \wedge \dx_4\,.
    \end{align}
Clearly, $F^{A_u}$ is primitive for any choice of $u$, and hence, it represents a trace-flat solution of the \gls{tym} functional.  Now, recall that any two $U(1)$-connections, $A$ and $A'$, on the same bundle are gauge-equivalent if and only if (1) $\alpha = A - A'$ is closed, and (2) $\int_{\gamma}\alpha \in 2\pi\Z$ for every loop $\gamma$.
The first condition implies $u - u'$ is constant.  The second condition implies $u - u'$ is an integer.  Altogether, modding out by gauge equivalence, we have obtained an infinite-dimensional family of trace-flat solutions parameterized by
\begin{align*}
        \set*{A_{u} : u \in C^\infty(T^2)}/{\sim}
       \quad \cong \quad C^\infty(T^2)/\Z ~.
\end{align*}
\end{ex}

\subsection{Comparison of the Euler-Lagrange Equations}
\label{sec:EL_equations_revisited}
In this subsection, we will express the Euler-Lagrange equations for the \gls{ym}, \gls{pym}, and \gls{tym} functionals in terms of the twisted symplectic differentials reviewed in \cref{tsympdiff}.  Specifically, we will make use of the almost complex structure $J$ of the compatible triple $(\om, g, J)$ to express the equations in terms of $\delma$.

In the presence of an almost complex structure, differential forms have a bidegree decomposition:
\[
    \Omega^k(M, \mathbb{C}) = \bigoplus_{k = p + q} \Omega^{p, q}(M)\,.
\]
With $(\omega, g, J)$ being a compatible triple, $\omega$ has bidegree $(1, 1)$.  Applying both the complex bidegree decomposition and Lefschetz decomposition to the curvature, we have  
\[
    F^A = \underbrace{F^{2,0} + F^{0,2} + F_p^{1,1}}_{F_p^A} + \Phi^A\omega\,.
\]
Let us denote by 
\begin{align*}
    \Pi^{p,q} :
        \Omega^{*}(M, \ad P) &\to \Omega^{p,q}(M, \ad P)
\end{align*}
the projection operator that maps forms onto their $(p, q)$-component.  We also recall the $\calJ$ operator which acts by multiplication by $i^{p-q}$ on $(p,q)$-components:
\begin{align*}
    \calJ :
        \Omega^k(M, \ad P) &\to \Omega^k(M, \ad P)
    \\
        \alpha &\mapsto \sum_{p+q=k} i^{p-q}\Pi^{p,q}(\alpha)\,.
\end{align*}
In particular,
\[
    \calJ(F_p) = -\paren[\big]{ \underbrace{F^{2, 0} + F^{0, 2}}_{F^{\calJ-}} }
        + F_p^{1,1}\,,
\]
where we have introduced the notation $F^{\calJ-}$ to denote the components of the primitive curvature with eigenvalue $-1$ with respect to $\calJ$ (not to be confused with the \gls{asd} components of the curvature over a 4-manifold). %

As we have seen, the Euler-Lagrange equations for $\norm{F^A}^2$, $\norm{F_p^A}^2$, and $\norm{\Phi^A\omega}^2$ can all be written in terms of $\dd_A^* = - * \dd_A *$.
To understand these expressions further, it will be useful to have a way to explicitly compute the Hodge star of any given form.
The Weil relation (see, for instance, 
\cite[Proposition~1.2.31]{huybrechtsComplexGeometryIntroduction2005})
does exactly this.
In particular, for any primitive $k$-form $\beta \in P^k(M)$,
\begin{equation}
    \label{eq:weil_relation}
    *\,L^r \beta
    = (-1)^{\frac{ k(k+1) }{ 2 } }\frac{ r! }{ (n-k-r)! } 
        L^{n-k-r}\calJ(\beta)\,.
\end{equation}
The Weil relation will allow us to remove the Hodge star and express the Euler-Lagrange equations purely %
in terms of $\delma$.  
\begin{prop}[Equivalent conditions for \gls{ym}, \gls{pym}, and \gls{tym}]
\label{prop:kahler:yang_mills:equations}
    Let $A$ be a connection over a symplectic manifold $(M^{2n}, \omega, g, J)$ with $n \geq 2$.
    Decompose its curvature as
    \[
        F^A = \paren*{ F^{\calJ-} + F_p^{1,1} } + \Phi^A\omega\,,
        \text{ where }
        F^{\calJ-} = F^{2,0} + F^{0,2}\,.
    \]
    We have the following equivalent conditions as given in \cref{table:updated_table_for_critical_points} for critical solutions of the \gls{ym}, \gls{pym}, and \gls{tym} functionals.
\end{prop}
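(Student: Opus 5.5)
The plan is to reduce each Euler-Lagrange equation $\dd_A^*(\cdot)=0$ to a first-order equation in $\delma$ and $\dd_A\Phi^A$. I will remove the Hodge star with the Weil relation \cref{eq:weil_relation}, then use the Lefschetz-decomposed Bianchi identity \cref{eq:omega_bianchi}, $\delma F_p^A = -\dd_A\Phi^A$, to trade $\dd_A\Phi^A$ for $\delma$ of curvature components. Since $\calJ$ acts only on the form part, it commutes with the bundle structure, so the Weil relation applies verbatim to $\ad P$-valued forms. The resulting conditions are the ones I expect to appear in \cref{table:updated_table_for_critical_points}.

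\emph{Step 1 (PYM).} For the primitive two-form $F_p^A$ ($k=2$, $r=0$), the Weil relation gives
\[
*F_p^A = -\frac{1}{(n-2)!}L^{n-2}\calJ(F_p^A),
\qquad
\calJ(F_p^A)=F_p^{1,1}-F^{\calJ-}\,.
\]
Since $\omega$ is closed and $\nabla$-parallel for the bundle part, I compute
\[
\dd_A^*F_p^A = \frac{1}{(n-2)!}*\paren*{L^{n-2}\dd_A\calJ(F_p^A)}.
\]
Expanding $\dd_A=\delpa+\omega\wedge\delma$, the term $L^{n-2}\delpa\calJ(F_p^A)$ vanishes because $\delpa\calJ(F_p^A)\in P^3$ and $P^3\subset\ker L^{n-2}$ by \cref{eq:characterization_of_prim_k_forms}. (When $n=2$ it vanishes because $P^3=0$.) Hence
\[
\dd_A^*F_p^A = \frac{1}{(n-2)!}*\,L^{n-1}\delma\calJ(F_p^A).
\]
Because $L^{n-1}$ is injective on one-forms, $A$ is PYM if and only if $\delma(F_p^{1,1}-F^{\calJ-})=0$. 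Combining this with the Bianchi relation $\delma(F_p^{1,1}+F^{\calJ-})=-\dd_A\Phi^A$ gives equivalent forms, for instance $2\,\delma F^{\calJ-}=\dd_A\Phi^A$ or $2\,\delma F_p^{1,1}=-\dd_A\Phi^A$.

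\emph{Step 2 (TYM).} This is already \cref{prop:tym:EL_equations}: TYM holds if and only if $\dd_A\Phi^A=0$. By Bianchi this is equivalent to $\delma F_p^{1,1}=-\delma F^{\calJ-}$, that is, $\delma F_p^A=0$.

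\emph{Step 3 (YM).} I add the two computations. The TYM proof gives
\[
\dd_A^*(\Phi^A\omega)=-\frac{1}{(n-1)!}*\paren*{\omega^{n-1}\dd_A\Phi^A},
\]
so
\[
\dd_A^*F^A = \frac{1}{(n-1)!}*\,L^{n-1}\Big((n-1)\,\delma\calJ(F_p^A)-\dd_A\Phi^A\Big).
\]
Injectivity of $L^{n-1}$ then gives: $A$ is YM if and only if $(n-1)\delma(F_p^{1,1}-F^{\calJ-})=\dd_A\Phi^A$. Substituting Bianchi turns this into the purely $\delma$ condition $n\,\delma F_p^{1,1}=(n-2)\,\delma F^{\calJ-}$. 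As a consistency check, when $n=2$ this reduces to $\delma F_p^{1,1}=0$, the expected anti-self-dual/self-dual split in four dimensions.

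The main obstacle is sign and constant bookkeeping rather than anything conceptual. This includes the sign $(-1)^{k(k+1)/2}$ in the Weil relation, the formula $\dd_A^*=-*\dd_A*$ together with $*^{-1}$ on odd forms, and confirming that $\dd_A$ commutes with $L$ and $\calJ$ in the required sense. Only commutation with $L$ is actually used; $\calJ$ is applied before differentiating. I will check the constants against the special cases $\omega$-flat ($F_p^A=0$, all three equations hold) and $n=2$ to make sure the table entries are consistent with \cref{lem:2_out_of_3}.
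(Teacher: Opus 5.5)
Your proposal is correct and follows essentially the paper's argument: remove the Hodge star with the Weil relation, drop $L^{n-2}\delpa\calJ(F_p^A)$ because primitive three-forms lie in $\ker L^{n-2}$, cancel $L^{n-1}$ on one-forms, and substitute the Bianchi relation $\delma F_p^A = -\dd_A\Phi^A$. The paper computes $\dd_A * F^A$ directly rather than summing the PYM and TYM pieces, and for TYM it starts from $\dd_A F_p^A = 0$ rather than $\dd_A\Phi^A = 0$, which is immaterial; one small slip in a side remark not needed for the table is that the PYM condition together with Bianchi gives $2\,\delma F^{\calJ-} = -\dd_A\Phi^A$ (same sign as your correct $2\,\delma F_p^{1,1} = -\dd_A\Phi^A$), not $+\dd_A\Phi^A$.
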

\begin{table}[t]
    \centering
    \begin{tblr}{
        columns = {halign = c},
        row{1} = {font=\bfseries},
        row{2-Z} = {mode = math},
        hline{1, Z} = {0.7pt, solid},
        hline{2} = {0.5pt, dotted, gray},
    }
        Functional & Equation for Critical Solutions \\
        \norm*{F^A}^2
            & \delma\paren*{ F_p^{1,1} - \frac{ n-2 }{ n } F^{\calJ-} } = 0
        \\
        \norm*{F^A_p}^2 &
            \delma\paren*{ F_p^{1,1} - F^{\calJ-} } = 0
        \\
        \norm*{\Phi^A\omega}^2 
            & \delma\paren*{ F_p^{1,1} + F^{\calJ-} } = 0
    \end{tblr}
    \begin{varwidth}{0.8\textwidth} 
        \caption{The critical solutions of the various functionals encoded
        entirely in the primitive curvature and the $\delma$-operator. The bidegree decomposition comes from the compatible triple $(\omega, g, J)$. Here, $F^{\calJ-} = F^{2,0} + F^{0,2}$.
        }
        \label{table:updated_table_for_critical_points}
    \end{varwidth}
\end{table}

\begin{proof}
For the \gls{ym} equations, we find from applying the Weil relation,    
    \begin{align*}
        0=\dd_A * F^A
        &= \dd_A * F_p^A + \dd_A * L\Phi^A
        \\ &
        =  -\frac{ 1 }{ (n-2)! } L^{n-2}\dd_A
            \calJ(F_p^A)
        +  \frac{ 1 }{ (n-1)! } L^{n-1}\dd_A \Phi^A\\
        &=-\frac{ 1 }{ (n-2)! } L^{n-1}\delma
            \calJ(F_p^A)+  \frac{ 1 }{ (n-1)! } L^{n-1}\dd_A \Phi^A\,,
    \end{align*}
    where in the third line, we applied the decomposition, $\dd_A = \delpa + L \delma$, and noted that  $\delpa \calJ(F_p^A)$ being a primitive three-form sits in the kernel of $L^{n-2}$.  Furthermore, since $L^{n-1}$ acts injectively on one-forms, this implies 
\begin{align*}
    0&= (n-1)\delma \calJ(F_p^A)-\dd_A \Phi^A \\*
     &=  (n-1)\delma \calJ(F_p^A) +\delma F_p^A \\
     &=  (n-1)\delma (F^{1,1}_p- F^{\calJ-}) + \delma (F^{1,1}_p+ F^{\calJ-})\\
     & = n\, \delma\paren*{ F_p^{1,1} - \frac{ n-2 }{ n } F^{\calJ-} }\,,
\end{align*}
where in the second line, we used the Bianchi identity relation \cref{eq:omega_bianchi}, $d_A \Phi^A =-\delma F^A_p $.

By similar arguments for the \gls{pym} equations, we find 
\begin{align*}
0= \dd_A * F_p^A &= -\frac{ 1 }{ (n-2)! }L^{n-2} \dd_A \calJ(F_p^A)\\
&=-\frac{ 1 }{ (n-2)! }L^{n-1}\delma\left(F^{1,1}_p- F^{\calJ-}\right)\,.
\end{align*}
The desired condition then follows from the injectivity of the $L^{n-1}$ map on one-forms.

Lastly, for the \gls{tym} equations, we start with the form given in \cref{prop:tym:EL_equations}(vi):
\begin{align*}
0=\dd_A F_p^A = \delpa F_p^A + \om \,\delma F_p^A\,.
\end{align*}
The first term vanishes by the Bianchi identity \cref{eq:primitive_bianchi} and replacing $F_p^A= F^{1,1}_p +F^{\calJ-}$ gives the desired result.
\end{proof}

As a check of the critical solution conditions, \cref{prop:kahler:yang_mills:equations} or \cref{table:updated_table_for_critical_points} give another proof of \cref{lem:2_out_of_3}.
\Cref{table:updated_table_for_critical_points}
also implies the following.
\begin{cor}
    Both $F_p^{1,1}$ and $F^{\calJ-}$ are $\delma$-closed if and only if $A$ is a critical solution of all three functionals.
\end{cor}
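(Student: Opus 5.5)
The statement is a corollary of \cref{prop:kahler:yang_mills:equations}, and I plan to read it off directly from \cref{table:updated_table_for_critical_points}. Abbreviate $a \eqdef \delma F_p^{1,1}$ and $b \eqdef \delma F^{\calJ-}$. Since $\delma$ is linear, the three rows of the table say that $A$ is \gls{ym}, \gls{pym}, or \gls{tym} if and only if, respectively,
\[
    a - \tfrac{n-2}{n}\, b = 0\,, \qquad a - b = 0\,, \qquad a + b = 0\,.
\]
The problem is thus reduced to linear algebra on the pair $(a,b)$ of $\ad P$-valued one-forms.

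For the forward direction, suppose $a = b = 0$. Then every left-hand side above vanishes, so $A$ is a critical solution of all three functionals.

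For the converse, suppose $A$ is critical for all three functionals. In particular it is both \gls{pym} and \gls{tym}, so $a - b = 0$ and $a + b = 0$. Adding and subtracting these gives $2a = 0$ and $2b = 0$, hence $\delma F_p^{1,1} = 0$ and $\delma F^{\calJ-} = 0$. The \gls{ym} row is not needed for this step.

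I do not expect a genuine obstacle here. The only point to check is that \cref{prop:kahler:yang_mills:equations} gives true equivalences, not just implications. Its proof passes through the injectivity of $L^{n-1}$ on one-forms and through the Bianchi relation \cref{eq:omega_bianchi}, and both of these hold for $n \geq 2$. So the table's conditions may be used in both directions.
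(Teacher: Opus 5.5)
Your proof is correct and takes the same approach as the paper, which states this corollary as an immediate consequence of \cref{table:updated_table_for_critical_points}. Your linear-algebra reading of the PYM and TYM rows, giving $\delma F_p^{1,1}=\delma F^{\calJ-}=0$, together with the trivial converse, is exactly that deduction, and you rightly check that the proposition gives genuine equivalences for $n\geq 2$.
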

It also gives a proof of \cref{cor:bidegree_equivalencies}.

\BidegreeEquivalencies*

\begin{ex}
    A Hermitian \gls{ym} connection $A$ has curvature
    \[
        F^A = F_p^{1,1} + \lambda I\omega\,,
    \]
    where $\lambda$ is a constant and $I$ is the identity map.
    Thus, $\Phi^A = \lambda I$, which is $\dd_A$-closed and hence \gls{tym}.
    Since $F^A$ has bidegree $(1,1)$, \cref{cor:bidegree_equivalencies} gives another proof that Hermitian \gls{ym} connections over Kähler manifolds are also \gls{ym} and \gls{pym}.
\end{ex}

\section{Examples and Properties of Critical Solutions}
\label{sec:comparison_of_crit_points}
In this section, we give explicit examples of the critical solutions of the three functionals and compare their properties. To facilitate the discussion, we introduce the notation
$\critYM(P)$, $\critPYM(P)$, and $\critTYM(P)$ for the spaces of critical solutions of 
$\norm{F^A}^2$, $\norm{F_p^A}^2$, and $\norm{\Phi^A\omega}^2$, respectively.

Before presenting the examples, let us first give a sufficient condition for
$\critYM = \critPYM$ in the case when $G$ is abelian.
\begin{prop}\label{cor:Kahler:ym_iff_pym_for_G_abelian}
    A principal bundle with abelian structure group
    over a closed Kähler manifold has $\critYM = \critPYM$ and
    $\critYM \subseteq \critTYM$.
\end{prop}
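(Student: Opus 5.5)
The plan is to exploit the fact that for an abelian structure group the adjoint bundle is trivial with trivial induced connection. Thus $\dd_A = \dd$ on $\Omega^*(M, \ad P)$, and $F^A$, $F_p^A$, $\Phi^A$ may be treated componentwise as ordinary real forms (choosing an orthonormal basis of $\mathfrak{g}$). In particular $\dd F^A = 0$, $\dd_A^* = \dd^*$, and on the closed Kähler manifold all the untwisted Kähler identities are available. The goal is to show that, in this setting, \gls{ym} $\Rightarrow$ \gls{tym} and \gls{pym} $\Rightarrow$ \gls{tym}. Then \cref{lem:2_out_of_3} upgrades both to ``critical for all three,'' which gives $\critYM = \critPYM$ and $\critYM \subseteq \critTYM$.

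The key identity I would establish first uses the Kähler identity $[\Lambda, \dd] = \pm \dd^{c*}$; the overall sign convention is irrelevant for the argument. Applied to the closed form $F$, and using $\Lambda F = n\Phi$, it gives $\dd^{c*}F = \pm(\dd\Lambda F - \Lambda \dd F) = \pm n\,\dd\Phi$. Applied to $\Phi\omega$, using $\dd\omega = 0$ and \cref{Llambdaf} on the one-form $\dd\Phi$ (so $\Lambda(\omega\wedge \dd\Phi) = (n-1)\dd\Phi$), it gives $\dd^{c*}(\Phi\omega) = \pm(n\,\dd\Phi - (n-1)\dd\Phi) = \pm \dd\Phi$. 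Subtracting yields
\[
    \dd^{c*}F_p = \pm (n-1)\,\dd\Phi\,,
\]
with $n-1 \neq 0$ since $n \geq 2$.

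Now suppose $A$ is \gls{pym}, so $\dd^* F_p = 0$ by \cref{prop:pym:EL_equations}. Pairing with $\dd\Phi$ on the closed manifold gives
\[
    (n-1)\norm{\dd\Phi}^2 = \pm\Linnerp{F_p, \dd^c\dd\Phi} = \mp\Linnerp{F_p, \dd\dd^c\Phi} = \mp\Linnerp{\dd^*F_p, \dd^c\Phi} = 0\,,
\]
using $\dd\dd^c = -\dd^c\dd$. Hence $\dd\Phi = 0$, so $A$ is \gls{tym} by \cref{prop:tym:EL_equations}(iv), and then \gls{ym} by \cref{lem:2_out_of_3}.

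If instead $A$ is \gls{ym}, then $\dd^*F = 0$, and the same pairing argument applied to $\dd^{c*}F = \pm n\,\dd\Phi$ gives $n\norm{\dd\Phi}^2 = \pm\Linnerp{\dd^*F, \dd^c\Phi} = 0$. Alternatively, $F$ is harmonic and $\Lambda$ commutes with the Laplacian, so $\Phi$ is a harmonic function and hence constant. Either way $A$ is \gls{tym}, and then \gls{pym} by \cref{lem:2_out_of_3}.

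The main obstacle is only bookkeeping: fixing consistent sign and normalization conventions for $\dd^c$ and the Kähler identity $[\Lambda,\dd]$, and verifying that the coefficient of $\dd\Phi$ in $\dd^{c*}F_p$ is $n-1$ rather than something that could vanish. I would double-check this coefficient directly with \cref{Llambdaf} as above. I also need to confirm that the abelian reduction is legitimate: the $\mathfrak g$-valued forms decouple into real components because the bracket terms vanish, and the pointwise inner product is the standard one on each component.
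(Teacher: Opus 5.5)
Your proof is correct. For the YM $\Rightarrow$ TYM direction, your ``alternative'' is exactly the paper's argument: $F^A$ is harmonic and $\Lambda$ commutes with $\laplace$, so $\Phi^A$ is a harmonic function and hence constant. For PYM $\Rightarrow$ YM you take a genuinely different route.

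The paper's argument runs through Hodge theory:
\begin{enumerate}
\item It replaces $F^A$ by its harmonic representative $F^A+\dd\xi$.
\item It applies the first half of the proof to conclude that $A+\xi$ is also PYM.
\item It deduces that $\delp\xi$ is $\delp^*$-closed and therefore vanishes, so $\dd\xi=f\omega$ with $f$ constant.
\item It uses $[\omega]\neq 0$ on a closed symplectic manifold to force $\dd\xi=0$.
\end{enumerate}

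You instead combine the Kähler identity $[\Lambda,\dd]=\pm\dd^{c*}$ with the abelian Bianchi identity to get $\dd^{c*}F_p^A=\pm(n-1)\dd\Phi^A$. Integrating by parts with $\dd\dd^c=-\dd^c\dd$ then kills $\norm{\dd\Phi^A}^2$. I checked the coefficient: both $\Lambda\,\dd F_p^A=-\Lambda(\omega\wedge\dd\Phi^A)=-(n-1)\dd\Phi^A$ and your subtraction give $n-1$, which is nonzero for $n\geq 2$.

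Your route treats both inclusions uniformly and avoids both the harmonic-representative step and the cohomological nontriviality of $\omega$. It also yields a local statement. Applying $\dd^*$ to your identity and using $\dd^*\dd^{c*}=-\dd^{c*}\dd^*$ shows that $\dd^*F_p^A=0$ forces $\laplace\Phi^A=0$ pointwise. Closedness is therefore used only to pass from harmonic to constant. This explains \cref{ex:closed_kahler_cannot_be_relaxed}, where $\Phi^A=x_1$ is harmonic but not constant.

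The paper's route, by contrast, stays within the $\delp$, $\delp^*$ formalism of the rest of the paper. It invokes the Kähler identities only through $[\Lambda,\laplace]=0$.
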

\begin{proof}
    Suppose first that $A$ is \gls{ym}. Then, by the Kähler identities,
    \[
        \laplace\Phi^A
        = \dfrac{1}{n}\laplace\Lambda F^A
        = \dfrac{1}{n}\Lambda\laplace F^A = 0\,.
    \]
    Since $M$ is closed, $\dd \Phi^A = 0$, which implies $A$ is \gls{tym}.
    Applying the \nameref{lem:2_out_of_3} (\cref{lem:2_out_of_3}), $A$ must also be \gls{pym}.

    Now suppose that $A$ is \gls{pym}. Then $F_p^A$ is $\delp$-harmonic.
    Moreover, since $F^A$ is $\dd$-closed by the Bianchi identity, Hodge theory guarantees that there exists a $\dd$-harmonic representative of the cohomology class $[F^A]$, which we can denote by ${\tilde F}^{\tilde A} = F^A + \dd\xi = \dd(A + \xi)$.
    Since $M$ is closed, this implies that ${\tilde A} = A + \xi$ is \gls{ym}.
    Applying the first part of the proof to $\tilde F^{\tilde A}$, we have that
    ${\tilde F}_p^{\tilde A} = \Pi(F^A + d\xi) = F_p^A + \delp\xi$ is also $\delp$-harmonic.
    What remains to show is $\dd\xi = 0$ to prove the claim.

    Since both $F_p^A$ and $\tilde{F}_p^{\tilde A}$ are $\delp^*$-closed,
    $\delp \xi = {\tilde F}_p^{\tilde A} - F_p^A$ is also $\delp^*$-closed. Taking the $L^2$-norm,
    \[
        \norm*{\delp \xi}^2
        = \Linnerp{\delp \xi, \delp \xi}
        = \Linnerp{\xi, \delp^* \delp \xi}
        = 0\,,
    \]
    where again, we have used the fact that $M$ is closed.
    Therefore, $\delp\xi = 0$.

    This implies that $d\xi = \delp\xi + \omega \wedge \delm\xi = f\omega$ for some function $f \in \Omega^0(M)$. Moreover, $ \dd^2 \xi = \omega \w \dd f  = 0$. With the Lefschetz operator being injective on 1-forms, we conclude that $f$ must be a constant. But $[\omega] \in H^2(M)$ is nontrivial for closed symplectic manifolds, so $f$ must actually be zero. Thus, $\dd\xi = 0$, implying $F^A = \tilde{F}^{\tilde A}$ is harmonic and $A$ is \gls{ym}.
\end{proof}
Below, we will give examples which make clear that $\critYM \neq \critPYM$ in general.  We will also show in \cref{ex:closed_kahler_nPYM_not_YM} that the containment $\critYM \subseteq \critTYM$ can be proper.

\subsection{Detailed Examples}
\label{sec:detailed_examples}
We first show that the closed assumption in
\cref{cor:Kahler:ym_iff_pym_for_G_abelian} cannot be dropped.
\begin{ex}[A \gls{pym} solution that is neither \gls{ym} nor \gls{tym}]\label{ex:closed_kahler_cannot_be_relaxed}
    Consider $M = \R^4$ as a K\"ahler manifold equipped with the standard Euclidean metric and the standard symplectic form 
    $\omega = \dx_1 \wedge \dx_2 + \dx_3 \wedge \dx_4$.
    Take the $U(1)$ connection form to be
    \[
        A = \dd y + \frac{1}{2} x_1^2 \dx_2 + x_1 x_3 \dx_4\,.
    \]
    Its curvature is
    \[
        F^A = \underbrace{
            x_1 \dx_1 \wedge \dx_2 + x_1 \dx_3 \wedge \dx_4
        }_{x_1 \omega}
        + \underbrace{
            x_3 \dx_1 \wedge \dx_4
        }_{F_p}\,.
    \]
    By direct computation, we can check that $A$ is \gls{pym}:
    \[
        \dd * F_p^A = 
        \dd \paren*{ x_3 \dx_2 \wedge \dx_3 }
        = 0\,.
    \]
    However, $\dd \Phi^A = \dd x_1 \neq 0$,
    so $A$ is not \gls{tym}. By the \nameref{lem:2_out_of_3}, $A$ also cannot be \gls{ym}. 
\end{ex}

The following example also on a $U(1)$ bundle shows that a \gls{tym} connection need not be \gls{ym}.
\begin{ex}[A \gls{tym} solution that is neither \gls{ym} nor \gls{pym}]\label{ex:closed_kahler_nPYM_not_YM}
We recall the family of trace-flat solutions given in \cref{ex:TYM:inf_dim_family} on a $U(1)$-bundle over $T^4 = \R^4/2\pi\Z^4$ equipped with the standard metric and
    symplectic form inherited from $\R^4$.
    
     To see that this family is not \gls{ym} in general, we compute from the expression of $F^{A_u}$ in \eqref{FAuu}: 
    \[
        \dd * F^{A_u}
        = \dd *\paren*{
            u_{x_1} \dx_1 \wedge \dx_4
            + u_{x_2} \dx_2 \wedge \dx_4
        }
        = \paren*{
            u_{x_1x_1}
            + u_{x_2x_2}
        }
        \dx_1 \wedge \dx_2 \wedge \dx_3\,.
    \]
    This vanishes if and only if $u$ is harmonic, and hence constant since $T^2$ is closed.
    Since $F_p^{A_u}=F^{A_u}$, it follows that the family is not \gls{pym} in general either.
\end{ex}

We next consider a $U(1)$-bundle over a non-Kähler closed symplectic 4-manifold.
We write down a \gls{ym} connection that is not \gls{pym}.
In fact, by the existence of such a connection, and appealing to
\cref{cor:Kahler:ym_iff_pym_for_G_abelian}, we can conclude that the manifold is not Kähler without directly computing the Nijenhuis tensor.
We then use Hodge theory to prove the existence of a connection that is
\gls{pym} but not \gls{ym}. 
\begin{ex}[A \gls{ym} solution that is neither \gls{pym} nor \gls{tym}]\label{ex:T4_ym_not_pym_and_vice_versa}
    Consider the 4-torus $T^4$ again equipped with the standard symplectic form, but now take the Riemannian metric to be 
    $$
        g = dx_1^2+dx_2^2+\frac{1}{f}dx_3^2+fdx_4^2\,,
        \quad\text{where}\quad
        f = \dfrac{3+2\sin 2x_2\cos x_3}{1-\frac{1}{2}\sin 2x_2\cos x_3} > 0\,.
    $$
    Notice that $g$ here is compatible with $\omega$.

    Construct a circle bundle $X=\R^5/{\sim}$ over $T^4$ by identifying
    \begin{align*}
        x_i\sim x_i+2\pi n_i
        \quad\text{and}\quad
        y\sim y+2\pi n_5 - n_1x_3\,,
        \quad\text{for $i=1, \ldots, 4$, and $n_i, n_5 \in \Z$}\,.
    \end{align*}
    Let the connection be
    \[
        A = dy + \frac{1}{2\pi}x_1 dx_3 + \frac{1}{4\pi}\sin 2x_2 \sin x_3 dx_1\,.
    \]
    Its curvature
    \[
        F^A = \dd A
        = -\frac{1}{2\pi}\cos 2x_2\sin x_3 \dx_1\wedge \dx_2
        + \frac{1}{2\pi} \left(1-\frac{1}{2}\sin 2x_2\cos x_3 \right)
            dx_1\wedge dx_3
    \]
    is a two-form on $T^4$.
    Moreover,
    \begin{align*}
        d*F^A
        &= \frac{1}{2\pi} \dd \paren[\bigg]{
            -\cos 2x_2\sin x_3 dx_3 \wedge dx_4
            - \left(1-\frac{1}{2}\sin 2x_2\cos x_3 \right)fdx_2 \wedge dx_4
        }
        \\
        &= \frac{1}{2\pi}\dd \paren[\bigg]{
            -\cos 2x_2\sin x_3 dx_3 \wedge dx_4-(3+2\sin 2x_2\cos x_3)
                dx_2 \wedge dx_4
        }
        \\*
        &= 0\,.
    \end{align*}
    So $A$ is \gls{ym}.
    The primitive component of the curvature, $F_p^A$ can be found by noting
    \[
        \Pi(\dx_1 \wedge \dx_2)
        = \frac{1}{2}\paren*{ \dx_1 \wedge \dx_2 - \dx_3 \wedge \dx_4 }\,.
    \]
    Therefore,
    \[
        F_p^A = 
        -\frac{1}{4\pi}\cos 2x_2\sin x_3
        \paren[\Big]{
            \dx_1\wedge \dx_2 - \dx_3\wedge \dx_4
        }
        + \frac{1}{2\pi} \left(1-\frac{1}{2}\sin 2x_2\cos x_3 \right)
            dx_1\wedge dx_3\,.
    \]
    The non-primitive component is determined by
    \[
        \Phi^A = -\frac{1}{4\pi}\cos 2x_2\sin x_3\,.
    \]
    Since $\Phi^A$ is not $d$-closed, $A$ is not \gls{tym} and also not \gls{pym}.
    By \cref{cor:Kahler:ym_iff_pym_for_G_abelian}, this implies that
    $(T^4, \omega, g, J)$ is not Kähler.
\end{ex}

\begin{ex}[A \gls{pym} solution on a closed manifold that is neither \gls{ym} nor \gls{tym}]
Starting from the solution of \cref{ex:T4_ym_not_pym_and_vice_versa}, we can use Hodge theory to prove the existence of a distinct connection on a $U(1)$-bundle over $T^4$ that is \gls{pym} but not \gls{ym}.
    The Bianchi identity \cref{eq:omega_bianchi} implies
    $\delm F_p^A = -\delp \Phi^A$.
    Differentiating both sides by $\delp$ implies $F_p$ is $\delp\delm$-closed.
    By the ellipticity of the symplectic Laplacian $\Delta_{++}$ in \cref{LaplacePP}, there exists some $\xi\in P^1(M)$ such that $F_p^A + \delp\xi$ is $\delp^*$-closed.
    Consider the connection $\tilde A = A + \xi$. Its curvature is
    ${\tilde F}^{\tilde A} = F^A+d\xi$, and the primitive part is exactly
    ${\tilde F_p}^{\tilde A} = F_p^A+\delp\xi$. Hence, $\tilde A$ is \gls{pym}.
    However, since $\delp^*F_p^A \neq 0$, $\delp\xi$ is not zero. So $\dd\xi\neq0$ and ${\tilde F}^{\tilde A} = F^A + \dd\xi$ is not $\dd$-harmonic.
    Thus, $\tilde A$ is not \gls{ym}, and by the \nameref{lem:2_out_of_3},
    $\tilde A$ is also not \gls{tym}.

\end{ex}

In the following example,
we show that the \gls{sd} BPST instantons
\cite{belavinPseudoparticleSolutionsYangMills1975}
are not \gls{pym}
(for a review of the BPST instanton, see
\cite{belitskyYangMillsDinstantons2000}).
\begin{ex}[\Glsfmtlong{sd} BPST Instanton: A \gls{ym} solution that is neither \gls{pym} nor \gls{tym}]\label{ex:BPST_instanton}
    Let $P$ be an $\SU(2)$-bundle over $M = \R^4$, and take the standard Euclidean metric and symplectic structure.
    Let
    \[
        A = \frac{2\eta^a_{\mu\nu} x^{\nu}}{|x|^2+1}T_a \dx^{\mu}\,,
    \]
    which is the connection of the SD  BPST instanton.
    Here, $\{ T_1,T_2,T_3 \}$ is a basis of $\mathfrak{su}(2)$ satisfying
    $[T_a,T_b] = \epsilon^{abc}T_c$, and
    $\eta^a_{\mu\nu}$ is the 't Hooft symbol
    $$
        \eta^a_{\mu\nu}=
        \begin{cases}
            \epsilon^{a\mu\nu}, & \mu,\nu=1,2,3 \\
            -\delta^{a\nu}, & \mu=4 \\
            \delta^{a\mu}, & \nu=4 \\
            0, & \mu=\nu=4
        \,.\end{cases}
    $$
    For example, we can set $T_a=\frac{1}{2i}\sigma_a$ for the Pauli matrices $\sigma_1,\sigma_2,\sigma_3$.
    Its curvature is
    \begin{equation*}
        F^A
        = \frac{4}{(|x|^2+1)^2}
        \paren[\Big]{
            - T_1 (dx^1\wedge dx^4+dx^2\wedge dx^3)
            + T_2 (dx^1\wedge dx^3-dx^2\wedge dx^4)
            - T_3\omega
        }\,.
    \end{equation*}
    With $F^A$ being self-dual, $A \in \critYM$.
    On the other hand,
    \[
        \Phi^A = -\frac{4}{(|x|^2+1)^2}T_3\,.
    \]
    Since $\frac{4}{(|x|^2+1)^2}$ is not $d$-closed, $d\Phi^A$ has a nontrivial $T_3$-component.
    Moreover, $[T_a,T_3]$ has no $T_3$-component for any $a$, so neither does
    $[A,\Phi^A]$. Therefore, $\dd_A\Phi^A$ has a nontrivial $T_3$-component and cannot vanish.
    It follows that $A$ is not \gls{tym}, and hence not \gls{pym}.

\end{ex}
This 4-dimensional example is especially interesting since self-duality is equivalent to the condition that $F_p^A$ has bidegree $(2, 0) + (0, 2)$.
We showed in \cref{cor:bidegree_equivalencies}(ii) that for $n \geq 3$, connections that satisfy this condition are \gls{ym} if and only if they are \gls{pym} and \gls{tym}.
Thus, the \gls{sd} BPST instanton is a counterexample to the extension of
\cref{cor:bidegree_equivalencies}(ii) to $n = 2$.

\begin{rmk}
    The construction of the \gls{sd} BPST instanton makes use of the homotopy classes of maps $S^3\rightarrow S^3$ from the boundary of $\R^4$ to $\SU(2) = S^3$ \cite{belitskyYangMillsDinstantons2000}. However, 
    the presence of $\om$ in the \gls{pym} conditions breaks the symmetry of $\R^4$ coordinates by separating them into two groupings of $(x^1, x^2)$ and $(x^3, x^4)$.
    For this reason, it is straightforward to extend
    \cref{ex:BPST_instanton} to show that the \gls{sd} BPST instanton is not \gls{pym} with respect to any symplectic form compatible with the Euclidean metric.
\end{rmk}

\subsection{\texorpdfstring{$\Omega$}{Omega}-Instantons}
\label{sec:omega_instantons}
For a 4-manifold, the anti-self-dual (ASD) equations $* F^A = -F^A$
are equivalent to $F^A = F_p^{1, 1}$.
In this case, $\Phi^A = 0$ and $\delma F_p^A = \dd_A \Phi^A = 0$.
Thus, \gls{asd} instantons are critical solutions of all three functionals.
The self-duality equations $* F^A = F^A$ are equivalent to
$F_p^A = F^{\calJ-}$.
In this case, since $n=2$, the first equation of
\cref{table:updated_table_for_critical_points}
is trivially satisfied, showing that \gls{sd} instantons are automatically Yang-Mills.
Since this does not constrain the curvature further, the other two equations need not be satisfied, and, in fact, are not in the case of the \gls{sd} BPST instanton in \cref{ex:BPST_instanton}.

For $d = 4$, the self-duality equations only constrain the primitive curvature $F_p^A = F^{\calJ-}$.
This is why \cref{cor:bidegree_equivalencies} has the $n = 2$ exception for
\gls{sd}-instantons. For higher-dimensional symplectic manifolds, the self-duality equations generalize and further constrain the non-primitive component of the curvature.
In this direction, let $\Omega \in \Omega^{d-4}(M)$ be an arbitrary form.
Then a connection $A$ is said to be $\Omega$-\gls{sd} ($\Omega$-\gls{asd}) if it satisfies the $\Omega$-instanton equations
\[
    * F^A = \pm F^A \wedge \Omega\,.
\]
See \cite{chenCompactness$Omega$YangMillsConnections2022} for an excellent introduction to $\Omega$-\gls{ym} connections and their compactness properties.
When $\Omega$ is closed, the $\Omega$-\gls{ym} equations\footnote{The $\Omega$-\gls{ym} equations for general $\Omega$ are $\dd_A^* (F^A + *(F^A \wedge \Omega)) = 0$.} and \gls{ym} equations are equivalent.

In the case of a symplectic manifold, there is a natural choice of a closed $\Omega$ which generalizes instantons on 4-manifolds:
$\Omega = \omega^{n-2}/(n-2)!$.
With this choice, the $\Omega$-instanton equations are
\begin{align*}
    \pm \frac{1}{(n-2)!} L^{n-2} F^A
    & %
    = * (F_p^A + \Phi^A \omega)
    = - \frac{1}{(n-2)!} L^{n-2}\calJ(F_p^A)
        + \frac{1}{(n-1)!} L^{n-1} \Phi^A
\,.\end{align*}
Equating Lefschetz components and using the injectivity of the Lefschetz operator, we find that a connection is $\Omega$-\gls{sd} if 
\[
    F =
    \begin{cases}
        F^{\calJ-} + \Phi^A\omega\,,
            & \text{if } n = 2
        \\
        F^{\calJ-}\,,
             & \text{if } n > 2
    \,.\end{cases}
\]
It is $\Omega$-\gls{asd} if
\[
    F^A= F_p^{1,1}\,.
\]
We have proven the following, which extends the analysis of the \gls{sd} BPST instanton case to higher dimensions.
\begin{prop}
    \label{prop:omega_instantons_are_triple_points}
    Let $(M^{2n}, \omega, g)$ be a symplectic manifold,
    and let $\Omega = \omega^{n-2}/(n-2)!$.
    Then
    \begin{enumerate}
        \item For $n \geq 3$, an $\Omega$-\gls{sd} connection is a critical solution of all three functionals.
        \item For $n \geq 2$, an $\Omega$-\gls{asd} connection is a critical solution of all three functionals.
    \end{enumerate}
\end{prop}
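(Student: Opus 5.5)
The plan is to use the curvature characterizations of $\Omega$-(A)SD connections derived just above, together with \cref{table:updated_table_for_critical_points} and the \nameref{lem:2_out_of_3}. By \cref{lem:2_out_of_3} it suffices, in each case, to show that $A$ is a critical solution of two of the three functionals. The most convenient pair is \gls{tym} and \gls{ym}. By \cref{prop:tym:EL_equations}, \gls{tym} is equivalent to $\dd_A\Phi^A = 0$, and by the Bianchi relation \cref{eq:omega_bianchi} this is in turn equivalent to $\delma F_p^A = 0$.

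\textbf{Case (ii): $\Omega$-\gls{asd}, $n \geq 2$.} Here $F^A = F_p^{1,1}$, so $\Phi^A = 0$. Hence $A$ is trace-flat and in particular \gls{tym}. Since $F^{\calJ-} = 0$, the \gls{tym} row of \cref{table:updated_table_for_critical_points} reads $\delma F_p^{1,1} = 0$. The \gls{pym} row reads $\delma(F_p^{1,1} - F^{\calJ-}) = \delma F_p^{1,1} = 0$, so it holds as well. Then \cref{lem:2_out_of_3} gives all three.

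An alternative route is \cref{cor:bidegree_equivalencies}(i): $F^A$ has bidegree $(1,1)$, and $A$ is critical for \gls{tym}, so it is critical for all three.

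\textbf{Case (i): $\Omega$-\gls{sd}, $n \geq 3$.} Here $F^A = F^{\calJ-}$, so again $\Phi^A = 0$ and $A$ is \gls{tym}. This gives $\delma F^{\calJ-} = 0$ via the \gls{tym} row with $F_p^{1,1} = 0$. Then every row of \cref{table:updated_table_for_critical_points} is a scalar multiple of $\delma F^{\calJ-}$, namely with coefficients $-(n-2)/n$, $-1$ and $1$. So all three rows vanish simultaneously.

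Equivalently, $F_p^A \in \Omega^{2,0}\oplus\Omega^{0,2}$ with $n \geq 3$, so \cref{cor:bidegree_equivalencies}(ii) applies once we know $A$ is \gls{tym}.

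The only delicate point is the derivation of the curvature forms themselves, which the paper has already carried out. In particular, for $n > 2$ the $L^{n-1}\Phi^A$ component must vanish in the $\Omega$-\gls{sd} case. This uses injectivity of $L^{n-2}$ on $\Omega^2$ and the comparison of coefficients $\frac{1}{(n-2)!}$ versus $\frac{1}{(n-1)!}$, which match only when $n=2$.

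The restriction $n \geq 3$ in (i) is exactly where this argument breaks down. When $n = 2$, $\Phi^A$ is unconstrained and need not be covariantly constant, as the \gls{sd} BPST instanton of \cref{ex:BPST_instanton} shows. No further obstacle is expected.
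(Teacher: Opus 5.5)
Your proposal is correct and follows essentially the paper's route. The curvature characterizations $F^A = F_p^{1,1}$ (for $\Omega$-ASD) and $F^A = F^{\calJ-}$ (for $\Omega$-SD with $n\geq 3$) give $\Phi^A = 0$, hence $\delma F_p^A = -\dd_A\Phi^A = 0$, and \cref{table:updated_table_for_critical_points} (equivalently \cref{cor:bidegree_equivalencies}) then forces all three equations. The only blemish is expository: you announce trace Yang-Mills and Yang-Mills as the pair to check, but you actually check trace Yang-Mills and primitive Yang-Mills (or all three rows at once), which works equally well.
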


\subsection{Rigidity Results for \texorpdfstring{$\om$}{omega}-Flat Solutions }
\label{sec:rigidity_results}

The $\omega$-flat connections are \gls{pym} and zeros of the \gls{pym} functional.  Their curvature is given by $F^A = \Phi^A\omega$.  The Bianchi identity implies that $\Phi^A$ must be covariantly constant, and therefore, they are also \gls{tym}.  
Recall that covariantly constant sections of vector bundles have locally constant pointwise norms.
A more qualitative version of this result was discussed for $\omega$-flat connections in
\cite[Proposition~3.5]{tsengSymplecticFlatnessTwisted2022}, but we give a direct proof here that $\abs*{\Phi^A}$ is locally constant.
\begin{lem}
    \label{prop:prf_locally_constant_crit_pts}
    If $\Phi^A \in \Gamma(\ad P)$ satisfies $\dd_A \Phi^A = 0$, then
    $\abs*{\Phi^A}$ is locally constant.
\end{lem}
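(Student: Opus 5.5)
The plan is to differentiate the pointwise squared norm $\abs*{\Phi^A}^2 = \innerp{\Phi^A, \Phi^A}$ and show that its exterior derivative vanishes. The key input is that the inner product on $\ad P$ comes from an $\mathrm{Ad}$-invariant inner product on the Lie algebra, so the covariant derivative $\dd_A$ is metric-compatible. For sections $u, v \in \Gamma(\ad P)$ this gives
\[
    \dd\innerp{u, v} = \innerp{\dd_A u, v} + \innerp{u, \dd_A v}\,,
\]
where the pairing of an $\ad P$-valued one-form with a section is taken fibrewise. In the paper's wedge convention \cref{eq:convention:wedge_product}, this is $\dd(u\wedge v) = \dd_A u \wedge v + u \wedge \dd_A v$.

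To justify this identity, I will work locally. Write $\dd_A u = \dd u + \brack{A, u}$ in a local trivialization, and expand $\dd\innerp{u,v}$ by the Leibniz rule for the trivial connection. The extra terms $\innerp{\brack{A,u},v} + \innerp{u,\brack{A,v}}$ cancel. This follows from the invariance identity $\innerp{\brack{a,b},c} = \innerp{a,\brack{b,c}}$ stated before \cref{eq:bracket_wedge_identity}, together with antisymmetry of the bracket:
\[
    \innerp{u,\brack{A,v}} = -\innerp{u,\brack{v,A}} = -\innerp{\brack{u,v},A} = -\innerp{A,\brack{u,v}} = -\innerp{\brack{A,u},v}\,.
\]
Equivalently, one can cite \cref{eq:bracket_wedge_identity} directly, with care about signs for one-forms. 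This sign bookkeeping is the only mildly delicate step, and it is routine.

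Applying the identity with $u = v = \Phi^A$ gives $\dd\abs*{\Phi^A}^2 = 2\innerp{\dd_A\Phi^A, \Phi^A} = 0$. Therefore $\abs*{\Phi^A}^2$ is locally constant, and hence so is $\abs*{\Phi^A}$, being its nonnegative square root. No smoothness issue arises at zeros of $\Phi^A$, because we argue with the squared norm.
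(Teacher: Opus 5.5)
Your proof is correct and is the same argument the paper gives: metric compatibility of $\dd_A$ yields $\dd\abs*{\Phi^A}^2 = 2\innerp{\dd_A\Phi^A,\Phi^A} = 0$. The only difference is that the paper asserts metric compatibility outright, whereas you verify it locally from the $\mathrm{Ad}$-invariance of the inner product; your sign bookkeeping in that verification is correct.
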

\begin{proof}

    Metric compatibility gives
    $\dd \abs*{\Phi^A}^2 = 2\innerp{\dd_A \Phi^A, \Phi^A} = 0.$
\end{proof}
This implies a particularly strong Liouville theorem for critical solutions of the \gls{tym} functional, and hence for $\omega$-flat connections.
This should be contrasted with the weaker Liouville theorem presented later in \cref{cor:liouville_theorem}.
\begin{cor}[$\omega$-flat Liouville]
\label{cor:symplectically_flat_liouville}
    Let $(M^d, \omega, g)$ be a connected symplectic manifold with a compatible metric and 
    $\vol(M) = \infty$.
    If $A$ is \gls{tym} and $\norm{\Phi^A\omega}^2 < \infty$, then $\Phi^A = 0$.
    In particular, there are no nontrivial $\omega$-flat connections with
    $\norm{F^A}^2 < \infty$ on manifolds with infinite volume.
\end{cor}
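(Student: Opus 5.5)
The plan is to combine the characterization of \gls{tym} connections in \cref{prop:tym:EL_equations} with \cref{prop:prf_locally_constant_crit_pts}. Suppose $A$ is \gls{tym}. By \cref{prop:tym:EL_equations}(iv), $\dd_A\Phi^A = 0$. By \cref{prop:prf_locally_constant_crit_pts}, $\abs*{\Phi^A}$ is then locally constant. Since $M$ is connected, $\abs*{\Phi^A} \equiv c$ for a single constant $c \geq 0$.

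Next I would compute the functional directly. Pointwise, $\abs*{\Phi^A\omega}^2 = n\abs*{\Phi^A}^2$, because $\abs*{\omega}^2 = n$ for a compatible metric, as noted in \cref{sec:pym:functional}. This gives
\[
    \norm*{\Phi^A\omega}^2 = n\int_M \abs*{\Phi^A}^2 \vol_g = n c^2 \vol(M)\,.
\]
Since $\vol(M) = \infty$, finiteness of the left side forces $c = 0$, and hence $\Phi^A = 0$. The one point needing care is that the pointwise identity holds even though $\Phi^A$ is $\ad P$-valued. This is immediate: the bundle inner product and the form inner product tensor together, so $\abs*{\Phi^A\omega}^2 = \abs*{\Phi^A}^2\abs*{\omega}^2$.

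For the ``in particular'' clause, let $A$ be $\omega$-flat, so $F^A = \Phi^A\omega$. As recalled in \cref{ex:zeros} and at the start of \cref{sec:rigidity_results}, the Bianchi identity gives $\dd_A\Phi^A = 0$, so $A$ is \gls{tym}. Moreover $\norm*{\Phi^A\omega}^2 = \norm*{F^A}^2 < \infty$. The first part then yields $\Phi^A = 0$, so $F^A = 0$ and $A$ is flat; that is, no nontrivial $\omega$-flat connection of finite energy exists.

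I do not expect a serious obstacle, since each step is a direct citation. The only places connectedness and infinite volume enter are in upgrading ``locally constant'' to a global constant and in concluding that this constant must vanish.
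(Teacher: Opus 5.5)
Your proposal is correct and follows the paper's intended argument: \gls{tym} gives $\dd_A\Phi^A = 0$, \cref{prop:prf_locally_constant_crit_pts} together with connectedness makes $\abs*{\Phi^A}$ constant, and $\norm*{\Phi^A\omega}^2 = n\abs*{\Phi^A}^2\vol(M) < \infty$ with $\vol(M)=\infty$ forces $\Phi^A = 0$. The $\omega$-flat case then follows from the Bianchi identity and $\norm*{F^A}^2 = \norm*{\Phi^A\omega}^2$, exactly as you describe.
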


A similar rigidity statement holds on closed manifolds.
The following results are in the same spirit as
\cite[Example~3.1]{tsengSymplecticFlatnessTwisted2022}.
Recall that for an $\SO(r)$-bundle, the inner product on its Lie algebra is given by the negative Killing form
\[
    \innerp{u, v}_{\mathfrak{so}(r)} = -c \trace(uv)\,.
\]
Here, $c = c(r) > 0$ is some positive, often dimensional, constant.
The first Pontryagin class of $P$ for an $\SO(r)$-bundle, using the Atiyah-Bott convention of the inner product given in \cref{eq:convention:wedge_product},
\[
    p_1(P) = \frac{1}{8\pi^2c} F^A \wedge F^A\,,
\]
defines an integral cohomology class independent of the choice of connection.
Thus, the first Pontryagin number on a closed 4-manifold,
\[
    P_1(P) = \int_M p_1(P) \in \Z\,,
\]
is an integral topological invariant of the bundle.
For a $4k$-manifold, consider the Pontryagin number
\[
    P_{1^k}(P) \eqdef
    \int_M \underbrace{p_1(P) \wedge \cdots \wedge p_1(P)}_{k\text{ times}}
    \in \Z\,.
\]
Note that for an $\SO(r)$-bundle in our notation,
$\abs{\Phi^A}^2 = -c\trace((\Phi^A)^2) = \Phi^A \wedge \Phi^A$.
We are now ready for the next constraint on $\omega$-flat connections.
This generalizes the well-known fact that bundles admitting both \gls{sd} and \gls{asd} instantons must be flat.
\begin{prop}
    \label{prop:norm_of_Phi_for_omega_flat}
    Let $(M^{4k}, \omega)$ be a closed connected symplectic manifold.
    If $A$ is an $\omega$-flat connection on a principal $\SO(r)$-bundle, then
    \begin{equation}\label{eq:first_pont_number_constraint_on_omega_flat}
        \abs*{\Phi^A}^2
        = 8\pi^2c
            \paren*{ \frac{P_{1^k}}{(2k)!\vol(M)} }^{1/k}\,.
    \end{equation}
    In particular, a flat $\SO(r)$-bundle on a $4k$-dimensional closed symplectic manifold admits no non-flat $\omega$-flat connections.
\end{prop}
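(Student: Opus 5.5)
The plan is a direct Chern--Weil computation, using the fact that the curvature of an $\omega$-flat connection is entirely proportional to $\omega$. Let $A$ be $\omega$-flat, so $F^A = \Phi^A\omega$. By the Bianchi identity (see \cref{ex:zeros}), $\dd_A\Phi^A = 0$. Then \cref{prop:prf_locally_constant_crit_pts} together with connectedness of $M$ gives that $\abs*{\Phi^A}^2$ is a constant $C \geq 0$. This constancy is what lets us pull $\abs*{\Phi^A}^2$ out of the integral at the end.

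Next I would compute the Chern--Weil representative of $p_1(P)$ built from $A$. In the Atiyah--Bott convention \cref{eq:convention:wedge_product}, writing $F^A = \Phi^A\otimes\omega$ (with $\Phi^A$ a section of $\ad P$ and $\omega$ a real two-form) gives
\[
    F^A\wedge F^A = \innerp{\Phi^A,\Phi^A}\,\omega\wedge\omega = C\,\omega^2 .
\]
The $\ad P$-parts pair through the inner product, and $\omega$ is even, so no sign arises. Hence the representative is
\[
    p_1(P) = \frac{C}{8\pi^2 c}\,\omega^2 .
\]
Since the $k$-fold wedge of these closed Chern--Weil forms represents the product class, integrating gives
\[
    P_{1^k}(P) = \paren*{\frac{C}{8\pi^2c}}^k \int_M \omega^{2k}.
\]
Because $\dim M = 4k$, we have $n = 2k$, so $\omega^{2k} = (2k)!\,\vol_g$ by the volume convention $\vol_g = \omega^n/n!$. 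Therefore
\[
    P_{1^k} = \paren*{\frac{C}{8\pi^2c}}^k (2k)!\,\vol(M).
\]
This shows in particular that $P_{1^k}\geq 0$, so taking the nonnegative real $k$-th root is legitimate. Solving for $C$ yields \cref{eq:first_pont_number_constraint_on_omega_flat}.

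For the final claim, suppose the bundle admits a flat connection. Computing $p_1$ with that connection gives the zero form, so the real Pontryagin class vanishes. Hence $P_{1^k} = 0$, and the formula forces $\Phi^A = 0$, i.e.\ $F^A = 0$.

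The only delicate points I expect are bookkeeping rather than real obstacles. One is checking the normalization $\abs{\Phi^A}^2 = \Phi^A\wedge\Phi^A$ against the $\trace$ convention for $\mathfrak{so}(r)$, so that the factor $8\pi^2 c$ comes out exactly as stated. The other is making sure $\omega^{2k}$ integrates to $(2k)!\vol(M)$ with the chosen orientation.
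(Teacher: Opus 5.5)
Your proposal is correct and follows the paper's proof essentially step for step. You write $F^A=\Phi^A\omega$, use \cref{prop:prf_locally_constant_crit_pts} (via $\dd_A\Phi^A=0$ from the Bianchi identity) to pull the constant $\abs{\Phi^A}^{2k}$ out of $\int_M(\Phi^A\omega\wedge\Phi^A\omega)^k$, and evaluate $\int_M\omega^{2k}=(2k)!\vol(M)$. Your argument for the final claim (a flat connection kills the real class $p_1$, so $P_{1^k}=0$ forces $\Phi^A=0$) spells out what the paper leaves implicit.
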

\begin{proof}
    Since $A$ is $\omega$-flat, $F = \Phi^A\omega$, and we can compute the Pontryagin number by
    \[
        P_{1^k}
        = \paren*{\frac{1}{8\pi^2c}}^k
            \int_M (\Phi^A\omega \wedge \Phi^A\omega)^k
        = \paren*{\frac{1}{8\pi^2c}}^k
            \int_M \abs*{\Phi^A}^{2k} \omega^{2k}\,.
    \]
    By \cref{prop:prf_locally_constant_crit_pts},
    \[
        P_{1^k}
        = \paren*{\frac{1}{8\pi^2c}}^k
        \abs*{\Phi^A}^{2k} (2k)!\vol(M)\,.
        \qedhere
    \]
\end{proof}
Similar algebraic constraints clearly hold for the other Pontryagin numbers by changing which class we choose to integrate.
The same techniques also apply verbatim to other compact matrix Lie groups.
This result also constrains the existence of $\omega$-flat connections according to the sign of $P_{1^k}$.
\begin{cor}
    Let $(M^{4k}, \omega)$ be a closed connected symplectic manifold.
    Let $P \to M$ be an $\SO(r)$-bundle.
    If $P_{1^k} < 0$, then there are no $\omega$-flat connections on $P$ with respect to any symplectic form $\omega \in \Omega^2(M)$.
\end{cor}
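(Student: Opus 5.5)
The corollary follows almost immediately from \cref{prop:norm_of_Phi_for_omega_flat}. I will argue by contradiction. Fix an arbitrary symplectic form $\omega$ on $M^{4k}$ and choose any compatible metric $g$, which always exists, so that the setup of \cref{prop:norm_of_Phi_for_omega_flat} applies. Suppose $A$ is an $\omega$-flat connection on $P$.

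The proposition then gives
\[
    \abs*{\Phi^A}^{2k} (2k)!\,\vol(M) = (8\pi^2c)^k P_{1^k}\,.
\]
To avoid ambiguity with the $k$-th root in \cref{eq:first_pont_number_constraint_on_omega_flat}, I will use this unrooted identity, which is the intermediate step in that proof. The left-hand side is a nonnegative real number, since $\abs*{\Phi^A}^2 \geq 0$, $\vol(M) > 0$ and $c > 0$. The right-hand side is strictly negative when $P_{1^k} < 0$. This is a contradiction, so no such $A$ exists.

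The conclusion holds for every $\omega$ because $P_{1^k}$ is a topological invariant of $P$. It is integral and independent of both the connection and the symplectic form. The choice of $\omega$ enters only through $\vol(M) = \omega^{2k}/(2k)!$, which is positive for the orientation induced by $\omega$.

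The one point needing care is the orientation convention. $P_{1^k}$ is computed using the orientation determined by $\omega^{2k}$, and a different symplectic form may induce the opposite orientation, which flips the sign of $P_{1^k}$. I will therefore read the hypothesis as $P_{1^k} < 0$ with respect to the orientation induced by the $\omega$ under consideration. Under that reading the argument above is sign-consistent and goes through unchanged for every $\omega$. When $k$ is even, reversing the orientation changes the sign, so this is a genuine convention issue and should be stated explicitly. When $k$ is odd the same remark applies. I expect no further obstacle.
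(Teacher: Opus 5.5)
Your argument is correct and is the paper's own: the identity from \cref{prop:norm_of_Phi_for_omega_flat} has a nonnegative left-hand side, so an $\omega$-flat connection forces $P_{1^k}\geq 0$. Working with the unrooted identity is a sensible way to sidestep the $k$-th root of a negative number, and your orientation caveat is fair, since the paper fixes the orientation by $\omega^{n}$; however, the parity of $k$ plays no role there, because reversing the orientation always flips the sign of $P_{1^k}$, and $\omega\mapsto-\omega$ never reverses the orientation in dimension $4k$.
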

\begin{proof}
    For an $\omega$-flat connection to exist, the right-hand side of
    \cref{eq:first_pont_number_constraint_on_omega_flat} requires
    $P_{1^k} \geq 0$ since the left-hand side is nonnegative.
\end{proof}
\begin{ex}\label{ex:K3_surfaces}
    K3 surfaces are hyperkähler surfaces. Their tangent bundles are
    $\SO(4)$-bundles.
    The tangent bundle of every K3 surface $K$ has first Pontryagin class
    $p_1(K) = -48\ell$
    \cite[Section~5.1]{freedConsistencyMTheoryNonOrientable2021}, where $\ell \in H^4(K; \Z) \cong \Z$ is the positive generator.
    Thus $P_1(K) < 0$, and no K3 surface admits an $\omega$-flat connection on its tangent bundle for any of its symplectic forms.
\end{ex}

These results require either compactness or finite action.
As the following simple example shows, $M = \R^{2n}$ admits many $\omega$-flat connections.
\begin{ex}
    Let $M = \R^{2n}$ with the Euclidean metric and standard symplectic structure.    
    Consider a (necessarily trivial) $G$-bundle over $\R^{2n}$, 
    and let $A_0$ be the canonical flat connection.
    Let $\Phi^A \in \mathfrak{g}$ be fixed for some $A$, and
    let $\lambda \in \Omega^1(\R^{2n})$ be such that $\omega = \dd\lambda$.
    Then, the family of connections,
    \[
        A_t = A_0 + t\lambda \Phi^A\,,
    \]
    has curvature
    \[
        F^{A_t} = \dd(t \lambda \Phi^A)
            + \frac{1}{2}t^2\brack*{\lambda \Phi^A, \lambda \Phi^A}
        = t\Phi^A\omega\,.
    \]
    So we have a one-parameter family $A_t$, for $t \in \R$, of $\omega$-flat connections.
\end{ex}

\subsection{Classification of \texorpdfstring{$\omega$}{omega}-Flat Solutions and Relations to Solutions of the Cone Yang-Mills Functional}
\label{sec:classification_of_omega_flat}
We relate the \gls{pym} functional to the cone Yang-Mills functional that was recently introduced by Tseng and Zhou in \cite{tsengMappingConeConnections2025}.  %
To start, let us fix a closed two-form, $\zeta \in \Omega^2(M)$, that is not necessarily non-degenerate. Then, for any connection form $A$ and section $B \in \Gamma(\ad P)$, the cone Yang-Mills functional is defined to be
\[
    \CYM[\zeta](A, B) 
    \eqdef \int_M \paren*{ \abs*{F^A + \zeta B}^2 + \abs*{\dd_A B}^2 } \vol_g\,.
\]
A pair $(A, B)$ is called a cone-flat connection if it is a zero of this functional.
We will show that, in the special case $\zeta = \omega$, the zeros of the cone Yang-Mills functional coincide with the zeros of the \gls{pym} functional.
This lets us use the classification of cone-flat connections in \cite{tsengMappingConeConnections2025} 
to conclude a classification of $\om$-flat connections.

The cone-flat connections for $\zeta = \omega$ are pairs $(A, B)$ which satisfy the equations
\[
    F^A + \omega B = 0 \quad \text{and}\quad
    \dd_A B = 0\,.
\]
Lefschetz decomposing the curvature and comparing terms, we find from the first equation that $\Phi^A\omega = -B\omega$. Since
$L : \Omega^0(M, \ad P) \to \Omega^2(M, \ad P)$
is injective, $\Phi^A = -B$.
We have proven the following.
\begin{lem}
    The cone-flat connections with $\zeta = \omega$ are the pairs
    $(A, -\Phi^A)$, where $A$ is $\om$-flat.
\end{lem}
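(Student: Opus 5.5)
The plan is to prove both inclusions directly: every cone-flat pair for $\zeta = \omega$ has the form $(A, -\Phi^A)$ with $A$ $\omega$-flat, and conversely every such pair is cone-flat. The tools are the Lefschetz decomposition $F^A = F_p^A + \Phi^A\omega$, its uniqueness and orthogonality, and the Bianchi identity \cref{eq:omega_bianchi}.

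For the first inclusion, let $(A,B)$ be cone-flat, so that $F^A + \omega B = 0$ and $\dd_A B = 0$. We decompose
\[
    0 = F^A + \omega B = F_p^A + (\Phi^A + B)\omega .
\]
The first summand lies in $P^2(M,\ad P)$ and the second in $\omega\, P^0(M,\ad P)$. By uniqueness (equivalently, orthogonality) of the Lefschetz decomposition, both vanish. Hence $F_p^A = 0$, so $A$ is $\omega$-flat. Moreover $(\Phi^A + B)\omega = 0$, and injectivity of $L$ on zero-forms gives $B = -\Phi^A$. The condition $\dd_A B = 0$ is not needed for this direction.

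For the converse, let $A$ be $\omega$-flat and set $B = -\Phi^A$. Then
\[
    F^A + \omega B = \Phi^A\omega - \Phi^A\omega = 0 .
\]
For the second equation, the Bianchi relation \cref{eq:omega_bianchi} gives $\dd_A \Phi^A = -\delma F_p^A = 0$, since $F_p^A = 0$. This step uses injectivity of $\omega\wedge$ on one-forms, which is why $d \geq 4$ is needed. Equivalently, one can invoke \cref{prop:tym:EL_equations}: $\omega$-flat connections are \gls{tym} by \cref{ex:zeros}, hence satisfy $\dd_A\Phi^A = 0$. Therefore $\dd_A B = 0$, and $(A, -\Phi^A)$ is a zero of $\CYM[\omega]$.

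There is no real obstacle; this is a direct unwinding of the definitions. The only point needing care is the converse direction, where the second cone-flat equation $\dd_A B = 0$ must be deduced from the Bianchi identity rather than assumed. That is exactly where the covariant constancy of $\Phi^A$ for $\omega$-flat connections enters.
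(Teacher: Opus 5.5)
Your proof is correct and follows the paper's argument: the paper likewise compares Lefschetz components of $F^A + \omega B = 0$ and uses injectivity of $L$ on zero-forms to conclude $B = -\Phi^A$ and $F_p^A = 0$. The paper leaves the converse implicit, relying on its earlier remark that the Bianchi identity makes $\Phi^A$ covariantly constant for $\omega$-flat $A$; you spell out that same step explicitly via \cref{eq:omega_bianchi}.
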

Thus, the cone Yang-Mills functional gives us another approach to studying the $\omega$-flat connections.
We can now use
\cite[Theorem~4.1]{tsengMappingConeConnections2025} which classifies cone-flat connections to immediately obtain the following classification of $\omega$-flat connections on all $G$-bundles.
\begin{prop}[Classification of $\omega$-flat connections]
\label{thm:classification_of_omega_flat_connections}
    Let $(M, \omega)$ be a connected symplectic manifold, and let $G$ be a Lie group.
    Then there is a bijective correspondence
    \[
        \correspondence
        {
            \text{isomorphism classes of $\omega$-flat}\\
            \text{connections on $G$-bundles over $M$}
        } {
            \text{conjugacy classes of}\\
            \text{homomorphisms $\rho : \Gamma \to G$}
        }\,,
    \]
    where $\Gamma$ is an $\R/\overline{H}$-extension of $\pi_1(M)$ and
    $\overline{H} \subseteq \R$ is the closure of the group
    \[
        H \eqdef \set*{
            \int_{S} \omega :
            [S] \in \pi_2(M)
        }\,.
    \]
\end{prop}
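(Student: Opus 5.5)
The plan is to reduce the statement to the cone-flat classification \cite[Theorem~4.1]{tsengMappingConeConnections2025} through the preceding lemma. That lemma says the cone-flat pairs for $\zeta = \omega$ are exactly $(A, -\Phi^A)$ with $A$ $\omega$-flat.

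\textbf{Step 1: a bijection of isomorphism classes.} I will check that the assignment $A \mapsto (A, -\Phi^A)$ is a bijection between $\omega$-flat connections on $G$-bundles and cone-flat pairs with $\zeta=\omega$.
\begin{enumerate}
\item Injectivity is clear, since the first component recovers $A$.
\item Surjectivity is the content of the lemma: if $F^A + \omega B = 0$ and $\dd_A B = 0$, then $F_p^A = 0$ and $B = -\Phi^A$, using injectivity of $L$ on zero-forms.
\item Compatibility with isomorphisms: a bundle isomorphism or gauge transformation $u$ acts on pairs by $(A,B)\mapsto(u\cdot A, \mathrm{Ad}_u B)$. Because the Lefschetz decomposition acts only on the form part, $F^{u\cdot A} = \mathrm{Ad}_u F^A$ gives $\Phi^{u\cdot A} = \mathrm{Ad}_u \Phi^A$.
\end{enumerate}
Hence isomorphism classes of $\omega$-flat connections correspond bijectively to isomorphism classes of cone-flat connections with $\zeta=\omega$. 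The one point to verify is that the notion of isomorphism of cone-flat pairs in \cite{tsengMappingConeConnections2025} is exactly this simultaneous action. If it also involves shifting $B$ by constants, I would have to check that this is consistent with $B=-\Phi^A$.

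\textbf{Step 2: specialize the cone-flat classification.} Next I would quote \cite[Theorem~4.1]{tsengMappingConeConnections2025} with $\zeta = \omega$, which is closed, as that theorem requires. It classifies cone-flat pairs on $G$-bundles over a connected $M$ by conjugacy classes of homomorphisms $\Gamma \to G$. Here $\Gamma$ is the central extension of $\pi_1(M)$ by $\R/\overline{H}$, with $\overline{H}$ the closure of the period group $\{\int_S \zeta : [S]\in\pi_2(M)\}$. Setting $\zeta=\omega$ gives exactly the $H$ in the statement.

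\textbf{Main obstacle.} The main difficulty is bookkeeping: making sure the hypotheses and the form of \cite[Theorem~4.1]{tsengMappingConeConnections2025} match ours. Three things need checking:
\begin{enumerate}
\item whether it requires $G$ compact or allows a general Lie group;
\item whether it is stated for spherical periods of $\zeta$ (through $\pi_2$) rather than for all of $H_2$;
\item whether it uses the same sign convention, $F^A + \zeta B$ versus $F^A - \zeta B$.
\end{enumerate}
A sign difference would only replace $B$ by $-B$ and does not affect the bijection. Intuitively, the holonomy of $A$ combined with the covariantly constant $\Phi^A$ defines a representation of the extension. Going around a sphere $S$ produces $\exp(\int_S\omega\,\Phi^A)$, so periods in $H$ act trivially, which is why the quotient by $\overline{H}$ appears. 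I would cite that theorem rather than reprove this.
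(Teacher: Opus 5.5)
Your proposal is correct and follows essentially the same route as the paper. The paper uses the lemma identifying cone-flat pairs for $\zeta=\omega$ with the pairs $(A,-\Phi^A)$, $A$ $\omega$-flat, and then cites \cite[Theorem~4.1]{tsengMappingConeConnections2025} directly. Your check that gauge transformations act compatibly ($\Phi^{u\cdot A}=\mathrm{Ad}_u\Phi^A$, so the correspondence descends to isomorphism classes) is a detail the paper leaves implicit.
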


By the Lefschetz decomposition, we can decompose the functional further into
\[
    \CYM[\omega](A, B)
    = \int_M \paren*{
        \abs*{F^A_p}^2
        + \abs*{\omega(\Phi^A + B)}^2
        + \abs*{\dd_A B}^2 
    } \vol_g
    \geq \norm*{F^A_p}^2\,.
\]
This motivates the embedding of the space of connections into the space of possible $(A, B)$ pairs given by
\[
    A \mapsto (A, -\Phi^A)\,.
\]
The cone Yang-Mills functional for this pairing is
\[
    \CYM[\omega](A, -\Phi^A)
    = \int_M \paren*{ \abs*{F^A_p}^2 + \abs*{\dd_A \Phi^A}^2 } \vol_g\,.
\]

The pair $(A, B)$ is a critical solution of the cone Yang-Mills functional for general $\zeta$ if and only if
\begin{subequations}
\begin{align}
    \dd_A^*\paren*{ F^A + \zeta B } + \brack*{ B, \dd_A B } &= 0\,,
    \label{eq:cym_equation_1}
    \\
    \zeta^*\paren*{ F^A + \zeta B } + \dd_A^* \dd_A B &= 0\,.
    \label{eq:cym_equation_2}
\end{align}
\end{subequations}
Plugging $(A, -\Phi^A)$ into \cref{eq:cym_equation_2} with $\zeta = \omega$ gives
\[
    \omega^*\paren*{ F^A + \omega (-\Phi^A) } + \dd_A^* \dd_A (-\Phi^A)
    = \Lambda F^A_p - \laplace^A \Phi^A
    = - \laplace^A \Phi^A\,.
\]
This implies that $\Phi^A$ is harmonic if $(A, -\Phi^A)$ is a critical solution.
On a closed manifold, this is equivalent to $\dd_A \Phi^A = 0$, so $A$ would be a critical solution of the \gls{tym} functional.
\Cref{eq:cym_equation_1} can be written as
\[
    \dd_A^*\paren*{ F^A + \omega (-\Phi^A) } + \brack*{ (-\Phi^A), \dd_A (-\Phi^A) }
    = \dd_A^*F^A_p + \brack*{ \Phi^A, \dd_A \Phi^A }\,.
\]
We have proven the following:
\begin{prop}
    Let $(M, \omega, g)$ be a closed symplectic manifold. Then,
    $A$ is a critical solution of all three functionals,
    $\norm{F^A}^2$, $\norm{F^A_p}^2$, and $\norm{\Phi^A\omega}^2$,
    if and only if $(A, -\Phi^A)$ is a critical solution of
    $\CYM[\omega]$.
    Moreover, if $A$ is such a critical solution, then
    \[
        \norm*{F^A_p}^2 = \CYM[\omega](A, -\Phi^A)\,.
    \]
\end{prop}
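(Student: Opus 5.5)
Both directions come down to comparing the two cone Yang-Mills equations \cref{eq:cym_equation_1,eq:cym_equation_2} at $(A,-\Phi^A)$ with the Euler-Lagrange equations of \cref{prop:pym:EL_equations,prop:tym:EL_equations}. The computation just before the statement already reduces them to
\[
    \dd_A^*F^A_p + \brack*{\Phi^A, \dd_A\Phi^A} = 0
    \qquad\text{and}\qquad
    \laplace^A\Phi^A = 0\,.
\]
Here I will use $\Lambda F^A_p = 0$, which holds because $F^A_p$ is primitive, and $\omega^*(\omega\,\cdot) = \Lambda L = n$ on zero-forms, so that $\omega^*(F^A - \omega\Phi^A) = \Lambda F^A_p = 0$. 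I will first recheck these two reductions carefully, since the signs and normalizations ($\zeta^*$ as the pointwise adjoint of $L$) must match the conventions of \cite{tsengMappingConeConnections2025}.

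\emph{($\Rightarrow$)} Suppose $A$ is critical for all three functionals. By \cref{prop:tym:EL_equations}, being \gls{tym} gives $\dd_A\Phi^A = 0$. Hence $\laplace^A\Phi^A = \dd_A^*\dd_A\Phi^A = 0$, and the bracket term $\brack{\Phi^A,\dd_A\Phi^A}$ also vanishes. Being \gls{pym} gives $\dd_A^*F^A_p = 0$ by \cref{prop:pym:EL_equations}. Both reduced equations therefore hold, so $(A,-\Phi^A)$ is a critical solution of $\CYM[\omega]$.

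\emph{($\Leftarrow$)} Suppose $(A,-\Phi^A)$ is a critical solution. The second reduced equation says $\dd_A^*\dd_A\Phi^A = 0$. Since $M$ is closed, pairing with $\Phi^A$ and integrating by parts gives $\norm{\dd_A\Phi^A}^2 = 0$, so $\dd_A\Phi^A = 0$ and $A$ is \gls{tym} by \cref{prop:tym:EL_equations}. Substituting $\dd_A\Phi^A = 0$ into the first reduced equation kills the bracket term and leaves $\dd_A^*F^A_p = 0$, so $A$ is \gls{pym}. The \nameref{lem:2_out_of_3} (\cref{lem:2_out_of_3}) then shows that $A$ is also \gls{ym}.

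For the final identity, recall $\CYM[\omega](A,-\Phi^A) = \norm{F^A_p}^2 + \norm{\dd_A\Phi^A}^2$ from the Lefschetz decomposition above. At such a critical solution $\dd_A\Phi^A = 0$, so the second term drops out and $\CYM[\omega](A,-\Phi^A) = \norm{F^A_p}^2$.

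The main obstacle is not any single step but verifying the two reduced cone equations, in particular that $\omega^*$ applied to $F^A - \omega\Phi^A$ gives exactly $\Lambda F^A_p = 0$ rather than a residual multiple of $\Phi^A$. Once that is confirmed, the rest is the integration by parts on the closed manifold.
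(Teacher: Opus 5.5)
Your proposal is correct and follows the paper's argument: substitute $(A,-\Phi^A)$ into the two cone Yang-Mills equations to obtain $\dd_A^*F^A_p + \brack{\Phi^A,\dd_A\Phi^A}=0$ and $\laplace^A\Phi^A=0$, use closedness of $M$ to upgrade the latter to $\dd_A\Phi^A=0$, and then conclude PYM and TYM, hence YM by the 2 out of 3 property, with the energy identity read off from $\CYM[\omega](A,-\Phi^A)=\norm{F^A_p}^2+\norm{\dd_A\Phi^A}^2$. You also spell out the integration by parts and the use of the 2 out of 3 property, both of which the paper leaves implicit.
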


\section{A Monotonicity Formula}
\label{sec:monotonicity_formula}
The goal of this section is to prove the following.
\MonotonicityFormula*
The monotonicity formula for the \gls{ym} functional, first proven by Price in \cite{priceMonotonicityFormulaYangMills1983}, plays a similar role to the monotonicity formula in harmonic map theory. Namely, it is a critical tool in the compactness theory of the moduli space of \gls{ym} connections.
The \gls{ym} monotonicity formula says that the scale-invariant $L^2$-norm of the curvature over a ball of radius $r$ is monotone increasing in $r$.
The formula for the \gls{pym} functional is similar; however, it carries the defect term $S_\tau$, as seen in \cref{eq:defect_monotonicity}. 

Our proof for the \gls{pym} monotonicity formula will be similar to that for the \gls{ym} case given in
\cite[Section~2.1]{tianGaugeTheoryCalibrated2000}.
Let $X \in \mathcal{X}(M)$ be a compactly supported vector field in $M$ (to be chosen later). Then $X$ generates a one-parameter family of diffeomorphisms
$\phi_t \in \Diff(M)$.
Fix a background connection $\tilde A$ over $M$. Then $\tilde A$ induces a parallel transport map $\tau_t : P \to P$, and we can lift the family of diffeomorphisms to $P$ as $\tilde \phi_t \in \Diff(P)$.
Given any connection $A$, it follows that $\tilde \phi_t$ generates a family of connections $A_t = \tilde\phi_t^*A$ with corresponding curvature forms
$F^{A_t} \in \Omega^2(M, \ad P)$ such that
\begin{equation}\label{eq:family_of_curvature_forms}
    F^{A_t}(V_1, V_2) = 
    \tau_t^{-1} \circ F^A(\pushf{\phi_t}V_1, \pushf{\phi_t}V_2)\,.
\end{equation}
Here, $\tau_t$ also denotes the induced parallel transport isometry on $\ad P$.

Using \cref{eq:family_of_curvature_forms}, we decompose the family of curvature forms into
\[
    \tau_t \circ F^{A_t}
    = \pullb{\phi_t}\paren*{ F_p^A + \Phi^A \omega }
    = \pullb{\phi_t}F_p^A + \paren*{ \pullb{\phi_t}\Phi^A }
        \paren*{ \pullb{\phi_t}\omega }\,.
\]
This is not the Lefschetz decomposition of $F^{A_t}$ anymore. All we can conclude in general is
\[
    F^{A_t}_p = \Pi F^{A_t}\,,\qquad
    \Phi^{A_t} = \frac{1}{n}\Lambda F^{A_t}\,.
\]
Since the monotonicity formula will make use of the first variation of the functional, we stop to consider
{\addtolength{\jot}{3pt}
\begin{align*}
    \frac{\dd}{\dd t}
        \abs*{F^{A_t}_p}^2
        \Big\rvert_{t=0}
    & %
    = \frac{\dd}{\dd t}
        \abs*{\Pi \circ \tau_t^{-1} \circ \pullb{\phi_t}F^{A}}^2
        \Big\rvert_{t=0}
    \\* &
    = \frac{\dd}{\dd t}
        \abs*{\tau_t^{-1} \circ \Pi \circ \pullb{\phi_t}F^{A}}^2
        \Big\rvert_{t=0}
       & \paren*{ \text{Lefschetz decomposition is on forms} }
    \\ &
    = \frac{\dd}{\dd t}
        \abs*{\Pi \circ \pullb{\phi_t}F^{A}}^2
        \Big\rvert_{t=0}
       & \paren*{ \text{Parallel transport is an isometry} }
    \\ &
    = 2\innerp*{
        \frac{\dd}{\dd t}
        \Pi \paren*{ \pullb{\phi_t} F^{A} }
        \Big\rvert_{t=0}
        , F_p^A
    }
\,.\end{align*}}
Since the symplectic form $\omega$ is fixed, $\Pi$ is time-independent. Together with its linearity,
\[
    \innerp*{
        \frac{\dd}{\dd t}
        \Pi \paren*{ \pullb{\phi_t} F^{A} }
        \Big\rvert_{t=0}
        , F_p^A
    }
    = \innerp*{
        \Pi
        \frac{\dd}{\dd t}
        \pullb{\phi_t} F^{A}
        \Big\rvert_{t=0}
        , F_p^A
    }
    = \innerp*{
        \frac{\dd}{\dd t}
        \pullb{\phi_t} F^{A}
        \Big\rvert_{t=0}
        , F_p^A
    }\,,
\]
where we dropped the projection operator in the last equality since the orthogonality of the Lefschetz decomposition ensures we only keep the primitive part.
Taking the derivative, we find
\[
    \frac{\dd}{\dd t}
        \pullb{\phi_t} F^{A}
        \Big\rvert_{t=0}
    = 
    \frac{\dd}{\dd t}
        \pullb{\phi_t} F^{A}_p
        \Big\rvert_{t=0}
    + (\calL_X \Phi^A)\omega
    + \Phi^A(\calL_X\omega)\,.
\]
We leave the first term as written for future convenience.
Taking the inner product with $F^A_p$, and applying orthogonality again, we obtain
\[
    \frac{\dd}{\dd t}
        \abs*{F^{A_t}_p}^2
        \Big\rvert_{t=0}
    = \frac{\dd}{\dd t}\abs*{\pullb{\phi_t}F^A_p}^2
    \Big\rvert_{t=0}
    + 2\innerp*{ \Phi^A(\calL_X\omega), F_p^A }\,.
\]

Let $(e_i)_{i=1}^d$ be a local orthonormal frame on $M$.
Then,
\[
    \norm*{\pullb{\phi_t}F^A_p}^2
    = \int_M \sum_{i,j}
        \abs*{ F_{p}^{A}
            \paren*{ \pushf{\phi_t}e_i(x), \pushf{\phi_t}e_j(x)} }^2
        \vol_g(x)\,.
\]
After a change of coordinates $x \mapsto \phi_{-t}(x)$, we find
\begin{equation}\label{eq:prim_curvature_on_family_of_connections}
    \norm*{\pullb{\phi_t}F^A_p}^2
    = \int_M \sum_{i,j}
    \abs*{ F^A_{p}
    \paren[\big]{
        \pushf{\phi_t}e_i(\phi_{-t}(x)),
        \pushf{\phi_t}e_j(\phi_{-t}(x))}
    }^2
    \phi_{-t}^*
    \vol_g(x)\,.
\end{equation}
\Cref{eq:prim_curvature_on_family_of_connections} is qualitatively identical to what we would find when expanding $\norm*{F^{A_t}}^2$ (see, for example,
\cite[Section~2.1]{tianGaugeTheoryCalibrated2000}).
Since $\norm*{F^{A_t}}^2$ is the quantity of interest when proving the monotonicity formula for the \gls{ym} functional, most of the manipulations of
\cref{eq:prim_curvature_on_family_of_connections} will mirror those for the
\gls{ym} case.

Our defect term is
\[
    S \eqdef \int_M \innerp*{ \Phi^A(\calL_X\omega), F_p^A }\vol_g
    = \Linnerp{\Phi^A\paren*{\calL_X \omega}, F_p^A}\,.
\]
The sign of $S$ will be a potential obstruction for a monotonicity formula for the primitive curvature.
The first variation is then
\begin{equation}\label{eq:first_variation_before_LC}
    \frac{\dd}{\dd t}
        \norm*{F_p^{A_t}}^2
    \Big\rvert_{t=0}
    = 2S - \int_M
    \paren[\bigg]{ 
    \abs*{ F^A_{p} }^2 \div X 
    + 4\sum_{i,j}^{} 
        \innerp*{
            F^A_{p} \paren*{ [X, e_i], e_j },
            F^A_{p} \paren*{ e_i, e_j }
        }
    } \vol_g\,.
\end{equation}

The remaining steps of the proof will now follow exactly those of the Yang-Mills case, except that we will be carrying the additional defect term throughout.
Hence, we will just present the arguments and refer the reader to \cite[Section~2.1]{tianGaugeTheoryCalibrated2000} and \cite[Section~3.2]{fadelIntroductionYangMillsTheory} for detailed derivations of each step.

Let $\nabla$ denote the Levi-Civita connection on $M$.
Using metric compatibility, we can rewrite \cref{eq:first_variation_before_LC} as 
\[
    \frac{\dd}{\dd t}
        \norm*{F_p^{A_t}}^2
    \Big\rvert_{t=0}
    = 2S - \int_{M} \paren[\bigg]{
    \abs{F^A_p}^2\div X - 4 \sum_{i,j}
        \innerp[\Big]{F^A_p(\nabla_{e_i}X, e_j), F^A_p(e_i, e_j)}
    } \vol_g\,.
\]
This is the \defterm{first variation formula} for the primitive Yang-Mills functional.
Now suppose $A = A_0$ is \gls{pym}.
By Cartan's formula, $\calL_X\omega = \dd\iota_X\omega$, where $\iota_X$ denotes interior multiplication.
Since $X$ is compactly supported, $\iota_X\omega$ is too.
Therefore,
\[
    0 = \Linnerp{\Phi^A\iota_X\omega, \dd_A^*F^A_p}
    = \Linnerp{\dd_A\Phi^A \wedge \iota_X\omega, F^A_p}
    + \Linnerp{\Phi^A\calL_X\omega, F^A_p}\,.
\]
So our defect term can be written as
\begin{equation}\label{eq:defect_term}
    S = \Linnerp{\iota_X\omega \wedge \dd_A\Phi^A, F^A_p}\,.
\end{equation}
Finally, the first variation reduces to
\[
    \int_{M} \paren[\bigg]{
    \abs{F^A_p}^2\div X - 4 \sum_{i,j}^{} 
        \innerp[\Big]{F^A_p(\nabla_{e_i}X, e_j), F^A_p(e_i, e_j)}
    } \vol_g
    = 2S\,.
\]

Fix $p \in M$. Denote the injectivity radius of $(M, g)$ at $p$ by
$\operatorname{inj}_{p}(M)$.
Let $0 < r(p) = r_g(p) \leq \operatorname{inj}_{p}(M)$ be such that there exist normal coordinates $(x^1, \ldots, x^d)$ on the geodesic ball
$B = B_{r(p)}(p)$, and a constant $c(p) = c_g(p)$, with the property that, in these coordinates, the metric satisfies
\begin{align*}
    \abs*{g_{ij} - \delta_{ij}} &\leq c(p)\abs*{x}^2\,,
    \\*
    \abs*{\del_k g_{ij}} &\leq c(p)\abs*{x}\,.
\end{align*}
We will assume, without loss of generality, that $r(p) \leq 1$. Then by standard scaling techniques
(see, e.g.,
\cite[Remarks~3.2.5 and~3.2.12]{fadelIntroductionYangMillsTheory}),
together with \cref{rmk:rescaling_the_pym_functional},
it will follow that the statement holds for arbitrary $g$.
Finally, if $M = \R^d$ equipped with the Euclidean metric, then we can take
$c(p) = 0$ and $r(p) = \infty$.

Let $\rho(x) = \sqrt{\sum_i(x^i)^2}$ be the radial distance function centered at $p$, and let $\xi$ be a smooth cutoff function supported in $[0, r(p)]$. We can now take
\begin{equation}\label{eq:price_vector_field}
    X(x) = \xi(\rho) \rho \del_\rho\,,
\end{equation}
where $\del_\rho$ denotes the radial vector field on the punctured ball.
Extend $\del_\rho$ to an orthonormal frame $(\del_\rho, e_2, \ldots, e_d)$.
Then, via estimates on the Hessian operator of $\rho$, we have the following:
\begin{center}
\begin{tblr}{
    colspec = { l l r },
    columns = {mode=math, cmd=\displaystyle},
}
    \nabla_{\del_\rho} \del_\rho = 0\,,\qquad
    & \nabla_{\rho e_i} \del_\rho = e_i + O(d)c(p)\rho^2\,,
    & i \geq 2\,, \\
    \nabla_{\del_\rho} X = (\xi'(\rho) \rho  + \xi(\rho))\del_\rho\,,\qquad\quad
    & \nabla_{e_i} X = \xi(\rho) \sum_{j=2}^{d} b_{ij}e_j\,, 
    & i \geq 2\,.
\end{tblr}
\end{center}
where $b_{ij} = \rho g(\nabla_{e_i}\del_\rho, e_j)$.
Standard estimates also imply
$\abs*{b_{ij} - \delta_{ij}} = O(d)c(p)\rho^2$.

Computing the divergence and rearranging gives
\[
    \int_{M} \abs{F^A_p}^2
        \paren[\big]{\xi'(\rho)\rho + (d-4)\xi(\rho) + O(d)c(p)\rho^2\xi(\rho)}
        \vol_g
    = 4\int_M \xi'(\rho)\rho \abs*{\iota_{\del_\rho}F^A_p}^2 \vol_g + 2S\,.
\]
Let $\eta$ be a bump function satisfying
\begin{align*}
    \eta(t) &= \begin{cases} 1 & t \in [0, 1] \\
     0  & t \in [1+\epsilon, \infty),\: \epsilon > 0 \end{cases}
    \\
    \eta'(t) &\leq ~0\,, \quad \forall \,t\,.
\end{align*}
Define the family of functions $\xi_\tau(\rho) \eqdef \eta(\rho / \tau)$.
Choose $\tau$ small enough and set
$\xi(\rho) = \xi_\tau(\rho)$.
From this we can replace
\[
    \rho \xi'(\rho) = -\tau \frac{\del}{\del \tau} \xi_\tau(\rho)\,.
\]
Now let
\[
    S_\tau \eqdef
    \int_M \xi_\tau(\rho) \rho \innerp*{
        \iota_{\del_\rho}\omega \wedge \dd_A \Phi, F^A_p
    }\vol_g\,.
\]
This lets us write
\begin{align*}
    \tau\frac{\del}{\del \tau}
    \int_{M} \xi_\tau(\rho) \abs{F^A_p}^2\vol_g
    + \int_{M} \xi_\tau(\rho) \abs{F^A_p}^2
        \paren[\Big]{(4-d) &+ O(d)c(p)\tau^2} \vol_g
    \\
    &=\: 4\tau\frac{\del}{\del \tau}\int_M 
    \xi_\tau(\rho) \abs*{\iota_{\del_\rho}F^A_p}^2
    \vol_g
    - 2S_\tau\,.
\end{align*}

Let $a \in \R$ be some real parameter, to be determined later.
Multiply the above by
$e^{a\tau^2}\tau^{3-d}$, and integrate with respect to $\tau$
from $0 < r_1 \leq r_2 < r(p)$ to obtain
\begin{multline*}
    \int_{r_1}^{r_2}
        \frac{\del}{\del\tau} \paren*{
            e^{a\tau^2}\tau^{4-d}
            \int_{M} \xi_\tau(\rho) \abs*{F^A_p}^2\vol_g
        } \dd\tau
    - 4\int_{r_1}^{r_2}
    e^{a\tau^2}\tau^{4-d} \paren*{
        \frac{\del}{\del \tau}
            \int_M \xi_\tau(\rho) \abs{\iota_{\del_\rho}F^A_p}^2 \vol_g
    }
    \dd\tau
    \\*
    =
    \int_{r_1}^{r_2}
    e^{a\tau^2}\tau^{5-d} \int_M
    \paren[\Big]{
        2a + O(d)c(p)
    } \xi_\tau(\rho)\abs{F^A_p}^2 \vol_g 
    \dd\tau
    - 2\int_{r_1}^{r_2}e^{a\tau^2}\tau^{3-d} S_\tau \dd\tau
\,.\end{multline*}
Choose $a$ such that $2a + O(d)c(p) \geq 0$ to drop this term altogether.
Sending $\epsilon \searrow 0$ proves \cref{thm:monotonicity_formula}.
As a corollary, we have the following Liouville-type theorem.
\begin{cor}[Primitive Liouville]
    \label{cor:liouville_theorem}
    Let $d = 2n > 4$.
    Let $A$ be a critical solution of all three functionals on a $G$-bundle over
    $\R^{d}$ with the standard Euclidean metric and symplectic form. If
    $\norm*{F^A_p}^2 < \infty$, then $A$ is $\omega$-flat.
\end{cor}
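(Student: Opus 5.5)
The plan is to apply the monotonicity formula (\cref{thm:monotonicity_formula}) on Euclidean $\R^d$, where $a=0$ and $r(p)=\infty$. Since $A$ is a critical solution of all three functionals, it is in particular \gls{tym}, so the defect term $S_\tau$ vanishes. Hence for every $p\in\R^d$ and $0<r_1\le r_2<\infty$,
\[
    r_2^{4-d}\int_{B_p(r_2)}\abs*{F_p^A}^2
    - r_1^{4-d}\int_{B_p(r_1)}\abs*{F_p^A}^2
    \geq 4\int_{B_p(r_2)\setminus B_p(r_1)}\rho^{4-d}\abs*{\iota_{\del_\rho}F_p^A}^2\,.
\]

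The argument then has two steps.

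First, let $r_2\to\infty$. Because $d>4$ and $\norm{F_p^A}^2<\infty$, the left endpoint term satisfies $r_2^{4-d}\int_{B_p(r_2)}\abs{F_p^A}^2\le r_2^{4-d}\norm{F_p^A}^2\to 0$. Monotonicity gives $\theta_{r_1}\le\theta_{r_2}$, so $\theta_{r_1}\le 0$ for every $r_1>0$. Therefore $\int_{B_p(r_1)}\abs{F_p^A}^2=0$ for all $r_1$, and so $F_p^A\equiv 0$ on $\R^d$. (As a consistency check, the radial term on the right also vanishes, though it is not needed.)

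Second, $F_p^A=0$ says precisely that $F^A=\Phi^A\omega$, i.e.\ $A$ is $\omega$-flat. Smoothness of $F_p^A$ lets us upgrade vanishing in $L^2$ on every ball to pointwise vanishing.

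The only delicate point is to confirm that the hypotheses of \cref{thm:monotonicity_formula} are really met: the \gls{tym} condition $\dd_A\Phi^A=0$ (\cref{prop:tym:EL_equations}) is exactly what kills $S_\tau$, and the Euclidean case gives $a=0$ and $r(p)=\infty$, so no exponential weight interferes with taking the limit. The strict inequality $d>4$ is essential, since for $d=4$ the factor $r^{4-d}$ is constant and the limit argument fails. That is why the statement excludes $n=2$.
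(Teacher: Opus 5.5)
Your proposal is correct and follows the paper's own argument. The paper likewise applies the monotonicity formula on Euclidean $\mathbb{R}^d$ with $a=0$ and $r(p)=\infty$, uses the primitive and trace Yang--Mills conditions to kill $S_\tau$, and sends the outer radius to infinity using finite action and $d>4$ to conclude $F^A_p=0$.
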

\begin{proof}
    Since $A$ is \gls{pym} and \gls{tym}, $S_\tau = 0$, and 
    for every $0 < r \leq R$, \cref{thm:monotonicity_formula} gives
    \[
        \frac{1}{r^{d-4}}\int_{B_0(r)} \abs*{F^A_p}^2 \dx
        \leq 
        \frac{1}{R^{d-4}}\int_{B_0(R)} \abs*{F^A_p}^2 \dx\,.
    \]
    Sending $R \to \infty$ together with finite action implies
    \[
        \frac{1}{r^{d-4}}\int_{B_0(r)} \abs*{F^A_p}^2 \dx \leq 0\,.
    \]
    Since $r$ was arbitrary, it follows that $F^A_p = 0$ and $A$ is $\omega$-flat.
\end{proof}

\end{document}